\documentclass{article}

\usepackage[utf8]{inputenc}
\usepackage[cyr]{aeguill}
\usepackage{graphicx}
\usepackage{bm}
\usepackage[a4paper]{geometry}
\geometry{hscale=0.8,vscale=0.80,centering}
\usepackage{amsmath,amsfonts,amsthm,amssymb}
\usepackage{stmaryrd}
\usepackage{color,xcolor}
\usepackage{subcaption}
\usepackage{array}
\usepackage{multirow}
\usepackage{url}
\usepackage{hyperref}
\usepackage{centernot}
\usepackage{tikz}
\usepackage[export]{adjustbox}

\newtheorem{example}{Example}
\newtheorem{remark}{Remark}
\newtheorem{proposition}{Proposition}

\newtheorem{definition}{Definition}
\newcommand{\R}{\mathbb R}

\renewcommand{\P}{\mathbb P}
\newcommand{\M}{\mathbb M}
\newcommand{\bbf}{\mathbf{f}}
\newcommand{\bbg}{\mathbf{g}}
\newcommand{\bbu}{\mathbf{u}}
\newcommand{\bbv}{\mathbf{v}}

\newcommand{\bF}{\mathbf{F}}
\newcommand{\bA}{\mathbf{A}}
\newcommand{\dpar}[2]{\dfrac{\partial #1}{\partial #2}}

\newcommand{\momentsmatrix}{\mathbf{Q}}

\DeclareSymbolFont{matha}{OML}{txmi}{m}{it}
\DeclareMathSymbol{\varv}{\mathord}{matha}{118}
\begin{document}
\title{A new local and explicit kinetic method for linear and non-linear convection-diffusion problems with finite kinetic speeds: \\ II. Multi-dimensional case}
\author{Gauthier Wissocq\footnote{corresponding author}, R\'emi Abgrall\\
Institute of Mathematics, University of Z\"urich, Switzerland\\
gauthier.wissocq@math.uzh.ch, remi.abgrall@math.uzh.ch}
\date{}
\maketitle
\begin{abstract}
We extend to multi-dimensions the work of~\cite{wissocq2023Kinetic}, where new fully explicit kinetic methods were built for the approximation of linear and non-linear convection-diffusion problems. The fundamental principles from the earlier work are retained: (1) rather than aiming for the desired equations in the strict limit of a vanishing relaxation parameter, as is commonly done in the diffusion limit of kinetic methods, diffusion terms are sought as a first-order correction of this limit in a Chapman-Enskog expansion, (2) introducing a coupling between the conserved variables within the relaxation process by a specifically designed collision matrix makes it possible to systematically match a desired diffusion. Extending this strategy to multi-dimensions cannot, however, be achieved through simple directional splitting, as diffusion is likely to couple space directions with each other, such as with shear viscosity in the Navier-Stokes equations. In this work, we show how rewriting the collision matrix in terms of moments can address this issue, regardless of the number of kinetic waves, while ensuring conservation systematically. This rewriting allows for introducing a new class of kinetic models called \emph{regularized} models, simplifying the numerical methods and establishing connections with Jin-Xin models. Subsequently, new explicit arbitrary high-order kinetic schemes are formulated and validated on standard two-dimensional cases from the literature. Excellent results are obtained in the simulation of a shock-boundary layer interaction, validating their ability to approximate the Navier-Stokes equations with kinetic speeds obeying nothing but a subcharacteristic condition along with a hyperbolic constraint on the time step.
\end{abstract}

\textbf{\textit{Keywords---}} Kinetic methods ; relaxation schemes ; BGK ; deferred correction ; MRT ; regularization ; Compressible Navier-Stokes equations 

\section{Introduction}

We are interested in the approximation of linear and non-linear systems of convection-diffusion equations using kinetic methods. In the well-documented context of hyperbolic conservation systems, thus in the absence of diffusion, the principle of kinetic methods initially proposed by Jin and Xin~\cite{Jin} is to supplement the target equations by the evolution of new flux variables, assumed to be in the form of advection-relaxation. This can be done in such a way that, in the limit of a vanishing relaxation parameter, the flux variables converge toward the expected flux, so that the solution of the kinetic model formally converges toward that of the target hyperbolic system. Interestingly, diagonalizing the hyperbolic part of the Jin-Xin model makes it possible to rewrite it as an advection-relaxation system at constant kinetic speeds, acting on functions that are referred to as distribution functions by analogy with the Boltzmann equation in the kinetic theory of gases~\cite{Natalini, AregbaNatalini}. In particular, relaxation can be expressed as a Bhatnagar-Gross-Krook (BGK) collision~\cite{Bhatnagar1954}, where distribution functions are relaxed towards an equilibrium state referred to as Maxwellian. Eventually, this description can be extended to any number of kinetic speeds in any spatial direction, introducing new free parameters in the kinetic model~\cite{Natalini}. As demonstrated by Bouchut~\cite{Bouchut}, when the Maxwellian state obeys some monotonicity property, which, in many cases, can be reduced to Whitham subcharacteristic condition~\cite{Whitham1974, Jin}, then the kinetic model becomes compatible with entropy inequalities. This fundamental property, together with the fact that advection is performed at \emph{constant} kinetic speeds, has made it possible to build very robust numerical methods for the approximation of hyperbolic systems in extreme conditions~\cite{Jin, Caflisch1997, Natalini, AregbaNatalini, Pareschi2005, Boscarino2009, Boscarino2014, Bouchut2018, Cho2021, AbgrallK, Abgrall2023}.

Adapting this kinetic framework to the approximation of convection-diffusion systems is, however, a tedious task which has been the topic of many research studies over the last decades~\cite{Jin1998, Klar1998, Jin2000, Naldi2000, Jin2001, Aregba-Driollet2003, Lemou2008, LAFITTE20171, Boscarino2013, Jang2014, Peng2021}. The main difficulty arises from the fact that, in the diffusion limit, the kinetic speeds of Jin-Xin like schemes scale as $1/\varepsilon$, where $\varepsilon$ is the relaxation parameter. As a consequence, the use of explicit schemes to approximate the hyperbolic terms of kinetic models leads to a numerical stability constraint $\Delta t = \mathcal{O}(\varepsilon \Delta x)$, which is, in the diffusive limit where $\varepsilon < \Delta x$, more restrictive than the common parabolic constraint $\Delta t = \mathcal{O}(\Delta x^2)$~\cite{Jin1998, Boscarino2013, Peng2020}. Some possibilities can be considered to circumvent this issue, \textit{e.g.}, as reported in~\cite{Boscarino2013}, the specific design of so-called partitioned schemes, splitting the stiff hyperbolic part into an explicit non-stiff term and an implicit stiff term. However, this strategy either leads to numerical schemes suffering from a parabolic stability condition $\Delta t = \mathcal{O}(\Delta x^2)$, or to the use of implicit methods in the time integration of space gradients, which can be computationally costly because large matrices have to be inverted~\cite{Bouchut2018}. In any case, it is delicate to ensure numerical stability of implicit-explicit (IMEX) schemes for such systems~\cite{Jin1998}.

In a previous paper~\cite{wissocq2023Kinetic}, a new strategy was proposed to approximate one-dimensional convection-diffusion systems by fully explicit kinetic methods with similar stability constraints as in the hyperbolic case. This was made possible thanks to two novel ideas: (1) instead of targeting a desired system in the strict limit of a vanishing relaxation parameter, the diffusive flux is captured at first-order in a smallness parameter referred to as the Knudsen number, (2) in order to introduce the necessary free parameters for the control of diffusion, the standard BGK relaxation is replaced by a collision matrix, thereby introducing a coupling between the conserved variables within the relaxation process. The first idea is inspired by the Chapman-Enskog expansion in kinetic theory~\cite{Chapman1953, Golse2021}: the Navier-Stokes equations are not a limit of the Boltzmann equation for a vanishing Knudsen number, but a correction of this limit for low, nonzero, values of the Knudsen number. The price to pay is that we only get an approximation of the initial advection-diffusion like model, but we show in~\cite{wissocq2023Kinetic} that this approximation can be of very high quality. This strategy has been numerically evaluated on a wide variety of one-dimensional problems, including the Navier-Stokes equations which is our main application target.

The aim of this work is to extend this method to more than one dimension. As shown thereafter, this extension cannot be made by dimensional splitting because the diffusion tensor is not, in general, block diagonal. With the Navier-Stokes equations, this property is exhibited by the existence of shear viscosity: the fluid motion in one direction generates a diffusive flux in the transverse direction. Then, although the main idea of a collision matrix remains the same as in the one-dimensional case, we need to develop a slightly different technique to introduce a specific coupling between the Cartesian directions, while systematically ensuring the conservation of macroscopic variables.

The format of the paper is as follow. In Sec.~\ref{sec:problem_statement}, we first introduce the physical models we are interested in, describe the diffusive tensor and its required properties, in particular with respect to the entropy. Then, we recall the kinetic framework under consideration and in Sec.~\ref{sec:kinetic_matrix}, we introduce a new collision model based on a relaxation matrix $\Omega$ that needs to be compatible with conservation. Satisfying this fundamental constraint does not look straightforward, and this is why, inspired by what is done in the lattice Boltzmann method (LBM) community and in particular the multiple-relaxation-time (MRT) one, we rewrite the system in terms of moments~\cite{DHumieres1994, Lallemand2000, DHumieres2002}. Using this definition, the evaluation of the new collision kernel is much easier and simplification can be achieved by introducing a regularization of high-order moments, inspired by what is sometimes done in the LBM~\cite{Skordos1993, Ladd2001, Latt2006, Malaspinas2015, Coreixas2017}. 
The fully explicit numerical scheme is developed in full details in Sec.~\ref{sec:discretizations_IMEX} and numerous validations, including for the Navier-Stokes equations, are given in Sec.~\ref{sec:numerical_validations}. They show that the performances of the method are excellent with kinetic speeds obeying nothing but a subcharacteristic condition along with a hyperbolic stability constraint on the time step.

\section{Problem statement}
\label{sec:problem_statement}

\subsection{Target convection-diffusion problem}

We want to approximate the following convection-diffusion equation on $\bbu$: $\mathcal{D} \times \mathbb{R}_+ \rightarrow \mathbb{R}^p$:
\begin{align}
	\dpar{\bbu}{t} + \sum_{i=1}^d \dpar{\bbf_i(\bbu)}{x_i} = \sum_{i=1}^d \dpar{}{x_i} \left( \sum_{j=1}^d \mathbf{D}_{ij} \dpar{\bbu}{x_j} \right),
	\label{eq:target_convection_diffusion}
\end{align}
where $\mathcal{D}\subset \mathbb{R}^d$ is open, $d$ is the number of space dimensions, $p$ is the number of conserved variables in $\bbu$, $\bbf_i(\bbu)$ is the flux in the direction $x_i$ and $\mathbf{D}_{ij}$ are diffusion matrices.  The flux and diffusion matrices are assumed to be defined on $\mathcal{D}$. Note that all vectors quantities are written in bold text. It is noteworthy that \eqref{eq:target_convection_diffusion} can be written as
\begin{align}
	\dpar{\bbu}{t} + \boldsymbol{\nabla}_{\boldsymbol{x}} \cdot
	\begin{pmatrix}
		\bbf_1 (\bbu) \\ \vdots \\ \bbf_d(\bbu)
	\end{pmatrix} =
	\boldsymbol{\nabla}_{\boldsymbol{x}} \cdot \left( \mathbf{D} \boldsymbol{\nabla}_{\boldsymbol{x}} \bbu \right),
\end{align}
where $\boldsymbol{\nabla}_x$ is the vector of space derivatives $\partial/\partial_{x_i}$ and $\mathbf{D}$ is the following block matrix
\begin{align}
	\mathbf{D} = 
	\begin{pmatrix}
		\mathbf{D}_{11} & \dots & \mathbf{D}_{1d} \\
		\vdots & \ddots & \vdots \\
		\mathbf{D}_{d1} & \dots & \mathbf{D}_{dd}
	\end{pmatrix} \in \mathcal{M}_{dp}(\mathbb{R}).
\end{align}

The systems we are interested in are equipped with an entropy $\eta$, \textit{i.e.} a strictly convex function of the variables $\bbu\in \Omega$. 
This means that there exists an entropy flux
$$\bbg:=( \bbg_1(\bbu), \ldots, \bbg_d(\bbu))^T,$$ such that for any $i=1, \ldots, d$,  the functions $\bbg:\mathcal{D}\mapsto \R$ satisfy
$$\nabla_\bbu \eta^T \nabla_\bbu \bbf_i(\bbu)=\nabla_\bbu \bbg_i(\bbu).$$
We also assume that the diffusive part is compatible with the entropy, \textit{i.e.} if $\bA_0$ is the Hessian of $\eta$ with respect to $\bbu$,
$$\begin{pmatrix}
W_1\\ \vdots \\ W_d\end{pmatrix}^T 	\begin{pmatrix}
		\mathbf{D}_{11}\bA_0^{-1} & \dots & \mathbf{D}_{1d}\bA_0^{-1} \\
		\vdots & \ddots & \vdots \\
		\mathbf{D}_{d1}\bA_0^{-1} & \dots & \mathbf{D}_{dd}\bA_0^{-1}
	\end{pmatrix} 
	\begin{pmatrix}
W_1\\ \vdots \\ W_d\end{pmatrix}\geq 0$$
for any $W_j$.

\begin{example}[Two-dimensional isotropic advection-diffusion equation]
\label{ex:ADE_2D}
	For the scalar ($p=1$) advection-diffusion equation in two dimensions ($d=2$), we have
	\begin{align}
		\begin{pmatrix}
			f_1(u) \\ f_2(u)
		\end{pmatrix} =
		\begin{pmatrix}
			c_1 \\ c_2
		\end{pmatrix} u,
		\qquad
		\mathbf{D} = 
		\begin{pmatrix}
			\alpha & 0 \\ 0 & \alpha
		\end{pmatrix},
	\end{align}
	where $c_1$, $c_2$ are respectively the horizontal and vertical constant advection velocities and $\alpha\geq 0$ is a constant diffusion parameter. The entropy is $\eta=u^2/2$.
\end{example}

\begin{example}[Two-dimensional compressible Navier-Stokes equations]
\label{ex:NS_2D}
	For the compressible Navier-Stokes equations in two dimensions ($d=2$), we consider $p=4$,
	\begin{align}
		\bbu = 
		\begin{pmatrix}
			\rho \\ \rho u \\ \rho v \\ E
		\end{pmatrix}, 
		\qquad
		\bbf_1(\bbu) = 
		\begin{pmatrix}
			\rho u \\ \rho u^2 + P \\ \rho u v \\ (E+P)u
		\end{pmatrix},
		\qquad
		\bbf_2(\bbu) =
		\begin{pmatrix}
			\rho v \\ \rho u v \\ \rho v^2 + P \\ (E+P)v
		\end{pmatrix}, 
	\end{align}
	where $P$ is the thermodynamic pressure, related to $\bbu$ through an adequate equation of state. The diffusion matrix reads
	\begin{align}
		\mathbf{D} =
		\begin{pmatrix}
			\mathbf{D}_{11} & \mathbf{D}_{12} \\
			\mathbf{D}_{21} & \mathbf{D}_{22}
		\end{pmatrix},
	\end{align}
	with
	\begin{align}
		& \mathbf{D}_{11} = \frac{1}{\rho}
		\begin{pmatrix}
			0 & 0 & 0 & 0 \\ 
			-(2\mu + \lambda)u & 2\mu+\lambda & 0 & 0 \\
			-\mu v & 0 & \mu & 0 \\
			-(2\mu+\lambda)u^2 - \mu v^2 - \gamma \mu (E/\rho-u^2-v^2)/\mathrm{Pr} & (2\mu+\lambda-\gamma \mu/\mathrm{Pr})u & \mu(1-\gamma/\mathrm{Pr}) v & \gamma \mu/\mathrm{Pr}
		\end{pmatrix}, \nonumber \\ 
		& \mathbf{D}_{12} = \frac{1}{\rho}
		\begin{pmatrix}
			0 & 0 & 0 & 0 \\
			-\lambda v & 0 & \lambda & 0 \\
			-\mu u & \mu & 0 & 0 \\
			-(\mu + \lambda) u v & \mu v & \lambda u 
		\end{pmatrix}, \qquad
		\mathbf{D}_{21} = \frac{1}{\rho}
		\begin{pmatrix}
			0 & 0 & 0 & 0 \\
			-\mu v & 0 & \mu & 0 \\
			-\lambda u & \lambda & 0 & 0 \\
			-(\mu+\lambda) u v & \lambda v & \mu u & 
		\end{pmatrix} \nonumber \\
		& \mathbf{D}_{22} = \frac{1}{\rho}
		\begin{pmatrix}
			0 & 0 & 0 & 0 \\
			-\mu u & \mu & 0 & 0 \\
			-(2\mu + \lambda) v & 0 & 2\mu+\lambda & 0 \\
			-(2\mu + \lambda) v^2 - \mu u^2 - \gamma \mu(E/\rho - u^2 - v^2)/\mathrm{Pr} & \mu(1-\gamma/\mathrm{Pr}) u & (2\mu + \lambda - \gamma \mu/\mathrm{Pr}) v & \gamma \mu/\mathrm{Pr}
		\end{pmatrix},
	\end{align}
	where $\mu$ is the dynamic viscosity, $\lambda$ is the second viscosity, $\gamma$ is the heat capacity ratio and $\mathrm{Pr}$ is the Prandtl number. Under the Stokes' hypothesis, it is common to set $\lambda = -2\mu/3$. This system is compatible with the physical entropy, and among all entropies described in \cite{Harten1983}, this is the only one which is also compatible with the diffusion terms, see \cite{mallet}.
\end{example}

\subsection{Kinetic model in the zero-diffusion case}

Following~\cite{Jin, Natalini, AregbaNatalini, Abgrall2023}, the hyperbolic system~\eqref{eq:target_convection_diffusion} obtained in the zero-diffusion case ($\mathbf{D}=\mathbf{0}$) can be approximated by the following multi-dimensional BGK kinetic system:
\begin{align}
	\label{eq:kinetic_with_tau}
	\dpar{\bF}{t} + \sum_{i=1}^d \Lambda_i \dpar{\bF}{x_i} = \frac{\M(\P \bF) - \bF}{\tau},
\end{align}
where $\bF: \mathcal{D} \times \mathbb{R}^+ \rightarrow \mathbb{R}^{kp}$ is commonly referred to as a distribution function for the kinetic system, $k$ is the number of kinetic waves, $\Lambda_i$ are real diagonal $kp \times kp$ matrices composed of kinetic velocities, $\tau$ is a positive relaxation time, $\mathbb{P}: \mathbb{R}^{kp} \rightarrow \mathbb{R}^p$ is a linear operator called \textit{projector} and $\M: \mathbb{R}^p \rightarrow \mathbb{R}^{kp}$ is a Lipschitz-continuous function referred to as Maxwellian. In the present work, we consider the projector operator as the following matrix:
\begin{align}
	\P = 
	\begin{pmatrix}
		\mathbf{I}_p & \dots & \mathbf{I}_p
	\end{pmatrix} \in \mathcal{M}_{p \times kp}(\mathbb{R}),
\end{align} 
where $\mathbf{I}_p$ denotes the $(p \times p)$ identity matrix. It can be shown~\cite{Natalini, AregbaNatalini} that when the Maxwellian state is built such that 
\begin{align}
\label{eq:conditions_maxwellian}
	\begin{cases}
		\P \M(\P \bF)=\P \bF, \\
		\P \Lambda_i \M (\P \bF) = \bbf_i(\P \bF), \qquad i=1 \dots d,
	\end{cases}
\end{align} 
then the hyperbolic system \eqref{eq:target_convection_diffusion} with $\mathbf{D}=\mathbf{0}$ is the formal limit of \eqref{eq:kinetic_with_tau} when $\tau \rightarrow 0$. These relations naturally lead to a condition on the number of waves to be considered in the kinetic model: $k \geq d+1$. Furthermore, when the kinetic velocities in $\Lambda_i$ are such that the Maxwellian functions are monotone in the sense of~\cite{Bouchut}, then the kinetic system is compatible with entropy inequalities.

\begin{example}[Four-wave model for the two-dimensional advection equation]
\label{ex:kinetic_advection_2D}
	Considering the notations of Example~\ref{ex:ADE_2D}, a four-wave model ($k=4$) can be built with:
	\begin{align}
		\bF = 
		\begin{pmatrix}
			f_1 \\ f_2 \\ f_3 \\ f_4
		\end{pmatrix}, \qquad
		\Lambda_1 = 
		\begin{pmatrix}
			-a & 0 & 0 & 0 \\
			0 & a & 0 & 0 \\
			0 & 0 & 0 & 0 \\
			0 & 0 & 0 & 0
		\end{pmatrix}, \qquad
		\Lambda_2 =
		\begin{pmatrix}
			0 & 0 & 0 & 0 \\
			0 & 0 & 0 & 0 \\
			0 & 0 & -a & 0 \\
			0 & 0 & 0 & a
		\end{pmatrix}, \qquad
		\M = 
		\begin{pmatrix}
			\M_1 \\ \M_2 \\ \M_3 \\ \M_4
		\end{pmatrix},
	\end{align}
	with $a>0$ and with
	\begin{align}
	\label{ex:advection_maxwellian_conditions}
		\begin{cases}
			\P \M  = \P \bF, \\
			\P \Lambda_1 \M = f_1(\P \bF), \\
			\P \Lambda_2 \M = f_2(\P \bF),
		\end{cases}
		\Rightarrow
		\begin{cases}
			\sum_{j=1}^k \M_j = \sum_{j=1}^k f_j, \\
			a(-\M_1 + \M_2) = f_1 (\P \bF), \\
			a(-\M_3 + \M_4) = f_2 (\P \bF).
		\end{cases}
	\end{align}
	Note that the problem \eqref{ex:advection_maxwellian_conditions} is underdetermined to build a Maxwellian. To close the system, the additional condition $\M_1 + \M_2 = \P \bF/2$ (or equivalently, $\M_3+\M_4 = \P \bF/2$) is commonly considered, leading to an isotropic Maxwellian~\cite{Natalini, Abgrall2023}. In this case, the monotonicity condition of the Maxwellian leads to the subcharacteristic condition: $a > 2\max(f_1'(\P \bF), f_2'(\P \bF))$. Finally note that a kinetic model with four waves is two dimensions is commonly referred to as \textit{D2Q4 lattice} in the lattice Boltzmann community~\cite{WolfGladrow2000}.
\end{example}

\begin{example}[Four-wave model for the two-dimensional Euler equations]
\label{ex:kinetic_Euler_2D}
	With the notations of Example~\ref{ex:NS_2D}, we define the following kinetic model with four waves ($k=4$):
	\begin{align}
		\bF = 
		\begin{pmatrix}
			\rho_1 \\ (\rho u)_1 \\ (\rho v)_1 \\ E_1 \\
			\vdots \\
			\rho_4 \\ (\rho u)_4 \\ (\rho v)_4 \\ E_4 \\
		\end{pmatrix}, \qquad
		\Lambda_1 =
		\begin{pmatrix}
			-a & 0 & 0 & 0 \\
			0 & a & 0 & 0 \\
			0 & 0 & 0 & 0 \\
			0 & 0 & 0 & 0
		\end{pmatrix} \otimes \mathbf{I}_4, \qquad
		\Lambda_2 =
		\begin{pmatrix}
			0 & 0 & 0 & 0 \\
			0 & 0 & 0 & 0 \\
			0 & 0 & -a & 0 \\
			0 & 0 & 0 & a
		\end{pmatrix} \otimes \mathbf{I}_4, \qquad
		\M = 
		\begin{pmatrix}
			\M_1^\rho \\ \M_1^{\rho u} \\ \M_1^{\rho v} \\ \M_1^E \\
			\vdots \\
			\M_4^\rho \\ \M_4^{\rho u} \\ \M_4^{\rho v} \\ \M_4^E
		\end{pmatrix},
		\end{align}
		where $a>0$ and where $\otimes$ denotes the Kronecker product between two matrices\footnote{If $\mathbf{A}=(a_{ij})\in \mathcal{M}_k(\R)$, and $\mathbf{B}\in \mathcal{M}_r(\R)$,
$\mathbf{A} \otimes \mathbf{B} =\begin{pmatrix}
a_{11}\mathbf{B} & \ldots & a_{1n} \mathbf{B}\\
\vdots & \ddots &\vdots\\
a_{k1} \mathbf{B} &\ldots &a_{kk} \mathbf{B}\end{pmatrix}\in \mathcal{M}_{kr}(\R)$.}. As in Example~\ref{ex:kinetic_advection_2D}, the system \eqref{eq:conditions_maxwellian} is underdetermined with a four-wave model and has to be supplemented by additional constraints. Considering $\M_1^\rho + \M_2^\rho = \rho/2$ and similarly for the other variables of $\bbu$, the monotonicity of the Maxwellian leads to the following subcharacteristic condition: $a > 2 \max[\rho(\bbf_1'(\P \bF)), \rho(\bbf_2'(\P \bF))]$, where $\rho(\mathbf{M})$ denotes the spectral radius of a matrix $\mathbf{M}$.
\end{example}

In the present work, we want to follow the strategy adopted in~\cite{wissocq2023Kinetic} to approach, with an arbitrary controllable error, the convection-diffusion problem \eqref{eq:target_convection_diffusion} with a kinetic method. Instead of studying the limit of the kinetic system when $\tau \rightarrow 0$, we want to perform an asymptotic expansion for ``small'' values of the relaxation time, and target the diffusive flux of \eqref{eq:target_convection_diffusion} at first-order in this expansion. Since ``small' by itself is meaningless, we need to introduce dimensionless numbers in order to quantity how the relaxation time can be small compared to other characteristic times of the kinetic problem. For this reason, and following the introduction of Knudsen number in kinetic theory of gases~\cite{golse:hal-00859451, Golse2021}, we consider a characteristic length scale of the problem under consideration $\ell$. We assume an isotropy of the kinetic velocities, which implies that the matrices $\Lambda_i$ have the same $L^2$ norm that can be written as $||\Lambda|| = ||\Lambda_i||$. This allows us to define dimensionless time, space and velocity matrix as
\begin{align}
	t^* = \frac{||\Lambda|| t}{\ell}, \qquad x^* = \frac{x}{\ell}, \qquad \Lambda_i^* = \frac{\Lambda_i}{||\Lambda||}.
\end{align}
The dimensionless BGK system reads
\begin{align}
	\dpar{\bF}{t^*} + \sum_{i=1}^d \Lambda_i^* \dpar{\bF}{x_i^*} = \frac{\M(\P \bF)- \bF}{\varepsilon},
\end{align}
where $\varepsilon$ is the Knudsen number defined as
\begin{align}
	\varepsilon = \frac{||\Lambda|| \tau}{\ell}.
\end{align}
Assuming that $\varepsilon \ll 1$ allows us to perform an asymptotic expansion in $\varepsilon$ (namely a Chapman-Enskog expansion~\cite{Chapman1953}) and neglect high-order terms. In the rest of the article, considering the notations adopted in~\cite{wissocq2023Kinetic}, we will adopt the following, maybe less familiar, form of \eqref{eq:kinetic_with_tau}:
\begin{align}
	\dpar{\bF}{t} + \sum_{i=1}^d \Lambda_i \dpar{\bF}{x_i} = \frac{||\Lambda||}{\ell} \frac{\M(\P \bF) - \bF}{\varepsilon}.
	\label{eq:kinetic_with_epsilon}
\end{align}
Similarly to what is done in~\cite{Natalini, wissocq2023Kinetic}, an asymptotic expansion in $\varepsilon$ yields, up to the second-order (see App.~\ref{app:CE_BGK} for the details of the Chapman-Enskog expansion):
\begin{align}
	\label{eq:CE_BGK}
	\dpar{\bbu^\varepsilon}{t} + \sum_{i=1}^d \dpar{\bbf_i(\bbu^\varepsilon)}{x_i} = \varepsilon \frac{\ell}{||\Lambda||} \sum_{i=1}^d \dpar{}{x_i} \left\{ \sum_{j=1}^d \left[ \P \Lambda_i \Lambda_j \M'(\bbu^\varepsilon) - \bbf_i'(\bbu^\varepsilon) \bbf_j'(\bbu^\varepsilon) \right] \dpar{\bbu^\varepsilon}{x_j} \right\} + \mathcal{O}(\varepsilon^2),
\end{align}
where $\bbu^\varepsilon := \P \bF$. It can be shown that, when $\M$ is a monotone Maxwellian function in the sense of~\cite{Bouchut}, then the approximated right-hand-side term of \eqref{eq:CE_BGK} dissipates all the Lax entropies of the kinetic BGK system. Eq.~\eqref{eq:CE_BGK} can be seen as an approximation of the convection-diffusion problem \eqref{eq:target_convection_diffusion} with a very specific diffusion matrix
\begin{align}
	\mathbf{D}_{ij} = \varepsilon \frac{\ell}{||\Lambda||} \left[ \P \Lambda_i \Lambda_j \M'(\bbu^\varepsilon) - \bbf_i'(\bbu^\varepsilon) \bbf_j'(\bbu^\varepsilon) \right].
\end{align}
As discussed in~\cite{wissocq2023Kinetic}, this formalism does not allow us to approximate a desired convection-diffusion problem for a given diffusion matrix $\mathbf{D}$. To this extent, the purpose of the next section is to introduce a new kinetic model involving a collision matrix for multi-dimensional convection-diffusion systems based on the idea developed in \cite{wissocq2023Kinetic}.

\section{Kinetic model with a collision matrix}
\label{sec:kinetic_matrix}

As proposed in \cite{wissocq2023Kinetic} in the one-dimensional case, we modify the BGK framework of \eqref{eq:kinetic_with_epsilon} by introducing a square collision matrix $\Omega \in \mathcal{M}_{kp}(\mathbb{R})$, as
\begin{align}
	\dpar{\bF}{t} + \sum_{i=1}^d \Lambda_i \dpar{\bF}{x_i} = \frac{||\Lambda||}{\ell} \frac{\Omega}{\varepsilon}(\M(\P \bF) - \bF).
	\label{eq:kinetic_with_Omega}
\end{align}

Note that $\Omega$ is a matrix to be defined. As a consequence, the Knudsen number $\varepsilon$ has to be defined as well. We first discuss some properties that have to be satisfied by the collision matrix allowing us to make some assumptions on its form.

\subsection{Conservation properties of the collision matrix}

A first property that has to be satisfied by $\Omega$ is that $\bbu^\varepsilon = \P \bF$ remains a collision invariant of the kinetic system, ensuring its conservation. This reads
\begin{align}
	\P \Omega (\M (\P \bF) - \bF) = \mathbf{0}.
\end{align} 
Let us build a general framework allowing us to ensure this property whatever the wave model under consideration. To do so, we draw inspiration from the multiple relaxation time (MRT) models in the lattice Boltzmann community~\cite{DHumieres1994, Lallemand2000, DHumieres2002} and switch to a formulation of the stiff relaxation in the moment space. 

It is necessary to introduce a linear transformation between the vector $\bF$ and a vector containing $\bbu^\varepsilon=\P\bF$, $\bbv_i^\varepsilon:=\P\Lambda_i\bF$, $i=1, \ldots, d$. Since $\bF\in \R^{kp}$ and $\bbu^\varepsilon, \bbv_i^\varepsilon \in \R^p$, then $\big (\bbu^\varepsilon, \bbv_1^\varepsilon, \ldots , \bbv_d^\varepsilon \big )\in \R^{(d+1)p}$. Recall that we necessarily have $k \geq d+1$ in order to construct a wave model. Thus, we see that we need to complement the vector $\big (\bbu^\varepsilon, \bbv_1^\varepsilon, \ldots , \bbv_d^\varepsilon \big )$ by a vector $\mathbf{h}$ of $\R^{p(k-d-1)}$. We can make the mapping $\bF\mapsto \big ( \bbu^\varepsilon, \bbv_1^\varepsilon, \ldots , \bbv_d^\varepsilon , \mathbf{h}\big )$ linear~\footnote{\label{footproof}In term of linear algebra, let us denote by $E$ the vector space generated by the 
 $\big (\bbu^\varepsilon, \bbv_1^\varepsilon, \ldots , \bbv_d^\varepsilon, \mathbf{0}_{p(k-d-1)} \big )$. It is of dimension $(d+1)p$ and  admits a supplement $H$ of dimension $p(k-d-1)$, $\R^{pk}=E\oplus H$. The mapping $\Psi: \bF\mapsto (\bbu^\varepsilon, \bbv_1^\varepsilon, \ldots , \bbv_d^\varepsilon):=\Psi(\bF)$ 
 is linear and maps $\R^{kp}$ to $E$. Then we can consider a projector $p$ of $\R^{kp}$ onto $H$, and the matrix $\momentsmatrix$ can be seen as $\momentsmatrix\bF=\big (\Psi(\bF), p(\bF) \big )$.}.  
 For reasons that will become more clear later in the text, and following the LBM community terminology, the additional vector $\mathbf{h}$ will be referred to as "high-order moments". We therefore define a moments matrix as follows.

\begin{definition}[Moments matrix]
	Considering a kinetic system in the form of \eqref{eq:kinetic_with_Omega}, we define a moments matrix $\momentsmatrix$ as
\begin{align}
\label{eq:moments_matrix}
	\momentsmatrix = 
	\begin{pmatrix}
		\P \\
		\P \Lambda_1 \\
		\vdots \\
		\P \Lambda_d \\
		\mathbb{H}
	\end{pmatrix},
\end{align}
where $\mathbb{H} \in \mathcal{M}_{hp \times kp}(\mathbb{R})$ is an arbitrary matrix designed to compute the high-order moments $\mathbf{h}$ from $\bF$ and $h=k-d-1 \geq 0$ is the dimension of high-order moments. This matrix is built such that $\mathrm{det}(\momentsmatrix) \neq 0$, \textit{i.e.} it is invertible by definition.
\end{definition}

\begin{definition}[Moments]
	With a given moments matrix, we define moments of the distributions $\bF$ and of the Maxwellian $\M (\P \bF)$ as
\begin{align}
	\mathbf{m}^{\bF} = \momentsmatrix \bF, \qquad \mathbf{m}^{\M} = \momentsmatrix \M (\P \bF).
\end{align}
\end{definition}

As such, the matrix $\momentsmatrix$ can be used to create a bijection between distribution functions $\bF$ and the following moments:
\begin{itemize}
	\item by application of $\P$, we compute the \emph{zeroth-order moments} of $\bF$,
	\item by application of $\P \Lambda_i$, we compute the \emph{first-order moments} of $\bF$,
	\item by application of $\mathbb{H}$, we compute the \emph{high-order moments} of $\bF$.
\end{itemize}
Note that with our convention, the projector $\P$ can be extracted from $\momentsmatrix$ as
\begin{align}
	\label{eq:PfromM}
	\P = 
	\begin{pmatrix}
		\mathbf{I}_p & \mathbf{0}_{p\times(k-1)p}
	\end{pmatrix}
	 \momentsmatrix,
\end{align}
where $\mathbf{0}_{m \times n}$ is the null matrix of size $m \times n$. Similarly, the first-order projector can be extracted as
\begin{align}
\label{eq:P1fromM}
	\begin{pmatrix}
		\P \Lambda_1 \\ \vdots \\ \P \Lambda_d
	\end{pmatrix} = \begin{pmatrix}
		\mathbf{0}_{dp \times p} & \mathbf{I}_{dp} & \mathbf{0}_{dp \times hp}
	\end{pmatrix}
	\momentsmatrix.
\end{align}  
The moments matrix can be used to build a Maxwellian satisfying the conditions \eqref{eq:conditions_maxwellian} in a systematic way. When the Maxwellian is known as a linear function of the variables $(\bbu^\varepsilon, \bbf_1(\bbu^\varepsilon), \dots \bbf_d(\bbu^\varepsilon))$, the following proposition can be demonstrated.

\begin{proposition}
\label{prop:HMzero}
	If $\M$ is a linear function of the variables $(\bbu^\varepsilon, \bbf_1(\bbu^\varepsilon), \dots, \bbf_d(\bbu^\varepsilon))$, then there exists a moments matrix $\momentsmatrix$ such that $\mathrm{det}(\momentsmatrix) \neq 0$ with $\mathbb{H}\M(\P \bF) = \mathbf{0}$.
\end{proposition}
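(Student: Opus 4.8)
The plan is to turn the statement into pure linear algebra. Since $\M$ is linear in the variables $(\bbu^\varepsilon, \bbf_1(\bbu^\varepsilon), \dots, \bbf_d(\bbu^\varepsilon))$, I would first record the factorization $\M(\bbu^\varepsilon) = \mathbf{B}\, z(\bbu^\varepsilon)$, where $z(\bbu^\varepsilon) := (\bbu^\varepsilon, \bbf_1(\bbu^\varepsilon), \dots, \bbf_d(\bbu^\varepsilon))^T \in \R^{(d+1)p}$ and $\mathbf{B} \in \mathcal{M}_{kp \times (d+1)p}(\R)$ is a fixed matrix. I would also abbreviate the ``known'' top block of the candidate moments matrix by $\mathbf{R} := (\P^T, (\P\Lambda_1)^T, \dots, (\P\Lambda_d)^T)^T \in \mathcal{M}_{(d+1)p \times kp}(\R)$; this block has full row rank $(d+1)p$ (equivalently, the map $\Psi$ of the footnote is onto, i.e. $\dim E = (d+1)p$), so that $\ker\mathbf{R}$ has dimension $hp$ with $h = k-d-1$, and the full matrix $\momentsmatrix$ will be square of size $kp$.

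Next I would observe that the Maxwellian conditions \eqref{eq:conditions_maxwellian} say precisely $\mathbf{R}\M(\bbu^\varepsilon) = z(\bbu^\varepsilon)$. The central step is to combine this with the factorization on the subspace $Z := \operatorname{span}\{z(\bbu^\varepsilon)\} \subseteq \R^{(d+1)p}$: for every generator one gets $\mathbf{R}\mathbf{B}\,z(\bbu^\varepsilon) = z(\bbu^\varepsilon)$, hence by linearity $\mathbf{R}\mathbf{B}\,z = z$ for all $z \in Z$, i.e. $\mathbf{R}$ is a left inverse of $\mathbf{B}$ on $Z$. Setting $V := \operatorname{span}(\operatorname{Im}\M) = \mathbf{B}Z$, this left-inverse identity yields the key fact $V \cap \ker\mathbf{R} = \{0\}$: if $w \in V$ satisfies $\mathbf{R}w = 0$, write $w = \mathbf{B}z$ with $z \in Z$; then $0 = \mathbf{R}w = \mathbf{R}\mathbf{B}z = z$, so $w = \mathbf{0}$.

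With $V \cap \ker\mathbf{R} = \{0\}$ in hand, the construction of $\mathbb{H}$ is routine. Since $\dim\ker\mathbf{R} = hp$ and $V$ meets $\ker\mathbf{R}$ trivially, I would pick a direct-sum decomposition $\R^{kp} = V \oplus \ker\mathbf{R} \oplus C$ for some complement $C$, choose any isomorphism $\phi : \ker\mathbf{R} \xrightarrow{\ \sim\ } \R^{hp}$, and define $\mathbb{H}$ to equal $\phi$ on $\ker\mathbf{R}$ and $0$ on $V \oplus C$ (equivalently $\mathbb{H} = \phi \circ \pi$, with $\pi$ the projection onto $\ker\mathbf{R}$ along $V \oplus C$). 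Then $\mathbb{H}$ vanishes on $V \supseteq \operatorname{Im}\M$, giving $\mathbb{H}\M(\P\bF) = \mathbf{0}$; and since $\ker\momentsmatrix = \ker\mathbf{R} \cap \ker\mathbb{H}$ while $\mathbb{H}|_{\ker\mathbf{R}} = \phi$ is injective, we get $\ker\momentsmatrix = \{0\}$, so the square matrix $\momentsmatrix$ is invertible.

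The step I expect to be the crux is establishing $V \cap \ker\mathbf{R} = \{0\}$, and this is exactly where the linearity hypothesis is indispensable: it is what permits the factorization $\M = \mathbf{B}z$ and converts the Maxwellian conditions into the genuine left-inverse relation $\mathbf{R}\mathbf{B}|_Z = \mathrm{Id}$. Without linearity, $\operatorname{span}(\operatorname{Im}\M)$ could have dimension exceeding $\dim Z$ and could overlap $\ker\mathbf{R}$, in which case no $\mathbb{H}$ could simultaneously annihilate $\M$ and complete $\mathbf{R}$ to an invertible matrix. Everything after that fact is standard choice of a complement together with a kernel computation, so I would keep it brief.
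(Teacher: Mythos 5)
Your proof is correct, and it reaches the same conclusion by a genuinely different construction than the paper's. The paper's proof starts from the footnote's existence of \emph{some} invertible moments matrix $\momentsmatrix_*$ and then performs elementary row operations: since $\M$ is linear, each high-order row satisfies $\mathbf{L}_i^*\M = \mu_{i0}\bbu^\varepsilon + \sum_{j}\mu_{ij}\bbf_j(\bbu^\varepsilon)$, so replacing $\mathbf{L}_i^*$ by $\mathbf{L}_i = \mathbf{L}_i^* - \mu_{i0}\P - \sum_j \mu_{ij}\P\Lambda_j$ annihilates $\M$ by the consistency conditions \eqref{eq:conditions_maxwellian}, while the determinant is unchanged because only multiples of the first $(d+1)p$ rows are subtracted from the last $hp$ rows. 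You instead build $\mathbb{H}$ from scratch: your $\mathbf{R}$ is the paper's reduced moments matrix $\overline{\momentsmatrix}$, and you isolate the geometric fact $\operatorname{span}(\operatorname{Im}\M)\cap\ker\mathbf{R}=\{0\}$ (deduced from the same consistency conditions plus linearity, via the left-inverse identity $\mathbf{R}\mathbf{B}|_Z=\mathrm{Id}$), then take $\mathbb{H}$ to be an isomorphism on $\ker\mathbf{R}$ extended by zero on a complement containing $\operatorname{Im}\M$, and conclude invertibility from $\ker\momentsmatrix=\ker\mathbf{R}\cap\ker\mathbb{H}=\{0\}$. Both arguments rest on the same two inputs, namely the rank condition of the footnote and the relations $\P\M=\bbu^\varepsilon$, $\P\Lambda_i\M=\bbf_i(\bbu^\varepsilon)$, but they package them differently, and each buys something: the paper's row-operation version is constructive in a practical sense, since the corrected rows are given by a closed formula in the coefficients $\mu_{ij}$, which is exactly how explicit matrices such as $\mathbb{H}=[1,1,-1,-1]\otimes\mathbf{I}_p$ in Example~\ref{ex:moments_matrix_D2Q4} are obtained; your version is self-contained in that it never presupposes an invertible $\momentsmatrix_*$ (only full row rank of $\mathbf{R}$), and it makes transparent exactly where linearity is indispensable, namely in preventing the Maxwellian image from meeting $\ker\mathbf{R}$, a fact that remains implicit in the paper's proof.
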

\begin{proof}
	Suppose that we know a moments matrix $\momentsmatrix_*$ of the kinetic system with $\mathrm{det}(\momentsmatrix_*) \neq 0$, see the footnote \ref{footproof} for a proof of the existence. 
	 If $\M$ is a linear function of the variables $(\bbu^\varepsilon, \bbf_1(\bbu^\varepsilon), \dots, \bbf_d(\bbu^\varepsilon))$, then $\mathbf{m}_*^\M := \momentsmatrix_* \M$ is also a linear function of these variables. Without loss of generality, this reads:
	\begin{align}
		\momentsmatrix_* =
		\begin{pmatrix}
			\P \\ \P \Lambda_1 \\ \vdots \\ \P \Lambda_d \\ \mathbf{L}^*_1 \\ \vdots \\ \mathbf{L}^*_{h}
		\end{pmatrix}, \qquad
		\mathbf{m}_*^\M =
		\begin{pmatrix}
			\bbu^\varepsilon \\ 
			\bbf_i(\bbu^\varepsilon) \\
			\vdots \\
			\bbf_d(\bbu^\varepsilon) \\
			\mu_{10}  \bbu^\varepsilon + \sum_{j=1}^d \mu_{1j} \bbf_j (\bbu^\varepsilon) 
			\\ \vdots \\
			\mu_{h0} \bbu^\varepsilon + \sum_{j=1}^d \mu_{hj} \bbf_j(\bbu^\varepsilon)
		\end{pmatrix},
	\end{align}
where $\mathbf{L}^*_i \in \mathcal{M}_{p\times kp}(\mathbb{R}) $ and $\mu_{ij}$ are real coefficients. Then define, for all $i \in \{1, \dots h \}$,
\begin{align}
	\mathbf{L}_i = \mathbf{L}^*_i - \mu_{i0} \P - \sum_{j=1}^d \mu_{ij} \P \Lambda_j.  
\end{align} 
We can easily check that $\mathbf{L}_i \M (\bbu^\varepsilon) = \mathbf{0}$. Now define a new moments matrix $\momentsmatrix$ as \eqref{eq:moments_matrix} where $\mathbb{H}$ is composed of the lines $\mathbf{L}_i$. We have $\mathbb{H} \M(\bbu^\varepsilon) = \mathbf{0}$ and $\mathrm{det}(\momentsmatrix) = \mathrm{det}(\momentsmatrix_*) \neq 0$, which ends the proof.
\end{proof}

We now illustrate the linear mapping with some examples.

\begin{example}[Four-wave model in two dimensions]
\label{ex:moments_matrix_D2Q4}
	We consider the general four-wave model for a system of $p$ conservation equations in two dimensions with
	\begin{align}
		\Lambda_1 = 
		\begin{pmatrix}
			-a & 0 & 0 & 0 \\
			0 & a & 0 & 0 \\
			0 & 0 & 0 & 0 \\
			0 & 0 & 0 & 0
		\end{pmatrix} \otimes \mathbf{I}_p , \qquad 
		\Lambda_2 = 
		\begin{pmatrix}
			0 & 0 & 0 & 0 \\
			0 & 0 & 0 & 0 \\
			0 & 0 & -a & 0 \\
			0 & 0 & 0 & a
		\end{pmatrix} \otimes \mathbf{I}_p,
		.
	\end{align}
	The dimension of high-order moments is $h=k-d-1=1$. The consistency equations \eqref{eq:conditions_maxwellian} supplemented by the condition $2(\M_1 + \M_2) = \bbu^\varepsilon$ read
	\begin{align}
		\mathbf{m}_*^\M :=  \momentsmatrix_* \M(\bbu^\varepsilon) = 
		\begin{pmatrix}
			\bbu^\varepsilon \\
			\bbf_1(\bbu^\varepsilon) \\
			\bbf_2(\bbu^\varepsilon) \\
			\bbu^\varepsilon
		\end{pmatrix}, \qquad
		\momentsmatrix_* = 
		\begin{pmatrix}
			1 & 1 & 1 & 1 \\
			-a & a & 0 & 0 \\
			0 & 0 & -a & a \\
			2 & 2 & 0 & 0 
		\end{pmatrix}\otimes \mathbf{I}_p,
	\end{align}
	and $\mathrm{det}(\momentsmatrix_*) = 8a^2 \neq 0$. These conditions lead to a unique Maxwellian,
	\begin{align}
		\M(\bbu^\varepsilon) = \momentsmatrix_*^{-1} \mathbf{m}_*^\M = \frac{1}{2}
		\begin{pmatrix}
			\bbu^\varepsilon/2 - \bbf_1(u^\varepsilon)/a \\
			\bbu^\varepsilon/2 + \bbf_1(u^\varepsilon)/a \\
			\bbu^\varepsilon/2 - \bbf_2(u^\varepsilon)/a \\
			\bbu^\varepsilon/2 + \bbf_2(u^\varepsilon)/a
		\end{pmatrix},
	\end{align}
	which is the isotropic Maxwellian state adopted in previous work with the four-wave model~\cite{Natalini, AregbaNatalini, Abgrall2023}. Following proposition~\ref{prop:HMzero}, it is possible to define another moments matrix $\momentsmatrix$ as
	\begin{align}
		\momentsmatrix = 
		\begin{pmatrix}
			1 & 1 & 1 & 1 \\
			-a & -a & 0 & 0 \\
			0 & 0 & -a & a \\
			1 & 1 & -1 & -1
		\end{pmatrix} \otimes \mathbf{I}_p,
	\end{align}
	with $\mathbb{H} = [1, 1, -1, -1] \otimes \mathbf{I}_p$ and check that $\mathbb{H} \M (\bbu^\varepsilon) = \mathbf{0}$ and $\mathrm{det}(\momentsmatrix) = \mathrm{det}(\momentsmatrix_*) \neq 0$. 
\end{example}

\begin{remark}
	With the definition of $\momentsmatrix$ in Example~\ref{ex:moments_matrix_D2Q4}, we see that $\mathbb{H} = \P (\Lambda_1^2-\Lambda_2^2)/a^2$, which justifies the denomination ``high-order moment'' for $\mathbf{h}=\mathbb{H} \bF$. In this example, it is a combination of second-order moments.
\end{remark}

\begin{example}[Eight-velocity model in two dimensions]
	\label{ex:moments_matrix_D2Q8}
	We consider an eight-velocity kinetic model where
	\begin{align}
		& \Lambda_1 = \mathrm{diag}((a_i \cos(\theta_i))_{i \in \{1,.., 8\}}) \otimes \mathbf{I}_p, \qquad \Lambda_2 = \mathrm{diag}((a_i \sin(\theta_i))_{i \in \{1,.., 8\}}) \otimes \mathbf{I}_p , \\
		& a_1 = a_3 = a_5 = a_7 = a, \qquad a_2 = a_4 = a_6 = a_8 = a\sqrt{2}, \qquad \theta_i = (i-1) \pi/4.
	\end{align}
	This velocity model, displayed in Fig.~\ref{fig:D2Q8}, is sometimes referred to as D2Q8 lattice in the LBM community~\cite{Junk2005}. Following \cite{AregbaNatalini}, we can build a Maxwellian as
	\begin{align}
		\M_i = \frac{\bbu^\varepsilon}{8} + \frac{a_i}{6a^2}\left[ \cos(\theta_i) \bbf_1(\bbu^\varepsilon) + \sin(\theta_i) \bbf_2(\bbu^\varepsilon) \right],
	\end{align}
	where we note $\M_i$ the Maxwellian related to the wave labelled $i$. This choice leads to
	\begin{align}
		& \M_1 = \frac{\bbu^\varepsilon}{8} + \frac{\bbf_1(\bbu^\varepsilon)}{6a}, & \M_2 = \frac{\bbu^\varepsilon}{8} + \frac{\bbf_1(\bbu^\varepsilon) + \bbf_2(\bbu^\varepsilon)}{6a},\ & \M_3 = \frac{\bbu^\varepsilon}{8} + \frac{\bbf_2(\bbu^\varepsilon)}{6a}, & \M_4 = \frac{\bbu^\varepsilon}{8} + \frac{-\bbf_1(\bbu^\varepsilon) + \bbf_2(\bbu^\varepsilon)}{6a}, \\
		& \M_5 = \frac{\bbu^\varepsilon}{8} - \frac{\bbf_1(\bbu^\varepsilon)}{6a}, & \M_6 = \frac{\bbu^\varepsilon}{8} - \frac{\bbf_1(\bbu^\varepsilon) + \bbf_2(\bbu^\varepsilon)}{6a},\ & \M_7 = \frac{\bbu^\varepsilon}{8} - \frac{\bbf_2(\bbu^\varepsilon)}{6a}, & \M_8 = \frac{\bbu^\varepsilon}{8} + \frac{\bbf_1(\bbu^\varepsilon) - \bbf_2(\bbu^\varepsilon)}{6a}.
	\end{align}
	The dimension of high-order moments is $h=k-d-1=5$. We can define the following moments matrix $\momentsmatrix$ leading to the following Maxwellian moments $\mathbf{m}^\M$:
	\begin{align}
		\momentsmatrix = 
		\begin{pmatrix}
			1 & 1 & 1 & 1 & 1 & 1 & 1 & 1 \\
			a & a & 0 & -a & -a & -a & 0 & a \\
			0 & a & a & a & 0 & -a & -a & -a \\
			a^2 & a^2 & -3a^2 & a^2 & a^2 & a^2 & -3a^2 & a^2 \\
			-3a^2 & a^2 & a^2 & a^2 & -3a^2 & a^2 & a^2 & a^2 \\
			0 & a^2 & 0 & -a^2 & 0 & a^2 & 0 & -a^2 \\
			0 & a^3 & -2a^3 & a^3 & 0 & -a^3 & 2a^3 & -a^3 \\
			-2a^3 & a^3 & 0 & -a^3 & 2a^3 & -a^3 & 0 & a^3
		\end{pmatrix} \otimes \mathbf{I}_p , \qquad
		\mathbf{m}^\M = 
		\begin{pmatrix}
			\bbu^\varepsilon \\
			\bbf_1(\bbu^\varepsilon) \\
			\bbf_2(\bbu^\varepsilon) \\
			\mathbf{0} \\ \mathbf{0} \\ \mathbf{0} \\ \mathbf{0} \\ \mathbf{0} 
		\end{pmatrix}.
	\end{align}
	We see that this choice of $\momentsmatrix$ leads to $\mathbb{H} \M(\P \bF) = \mathbf{0}$. Moreover, note that the matrix $\mathbb{H}$ can be written as 
	\begin{align}
		\mathbb{H} = 
		\begin{pmatrix}
			\P ( 4\Lambda_1^2 - 3a^2 \mathbf{I}_{kp}) \\
			\P (4\Lambda_2^2 - 3a^2 \mathbf{I}_{kp}) \\
			\P \Lambda_1 \Lambda_2 \\
			\P (3\Lambda_1^2 \Lambda_2 - 2a^2 \Lambda_2) \\
			\P (3\Lambda_2^2 \Lambda_1 - 2a^2 \Lambda_1)
		\end{pmatrix},
	\end{align}
	\textit{i.e.} as high-order powers of the kinetic matrices $\Lambda_1$ and $\Lambda_2$, thus justifying the denomination ``high-order moments''.
	\end{example}

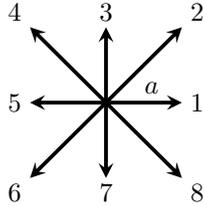
\begin{figure}[h!]
\centering
    \begin{tikzpicture}
        \begin{scope}[xshift=1cm,yshift=-1.2cm]
            \draw [line width=0.5mm,,>=stealth, ->] (0,0) -- (1,0);
            \draw [line width=0.5mm,>=stealth, ->] (0,0) -- (0,1);
            \draw [line width=0.5mm,>=stealth, ->] (0,0) -- (-1,0);
            \draw [line width=0.5mm,>=stealth, ->] (0,0) -- (0,-1);
            \draw [line width=0.5mm,,>=stealth, ->] (0,0) -- (1,1);
            \draw [line width=0.5mm,,>=stealth, ->] (0,0) -- (-1,1);
            \draw [line width=0.5mm,,>=stealth, ->] (0,0) -- (-1,-1);
            \draw [line width=0.5mm,,>=stealth, ->] (0,0) -- (1,-1);
	        \node at (0.6,0.2){$a$};
	        \node at (1.2,0){$1$};
	        \node at (1.2,1.2){$2$};
	        \node at (0, 1.2){$3$};
	        \node at (-1.2, 1.2){$4$};
	        \node at (-1.2, 0){$5$};
         	\node at (-1.2, -1.2){$6$};
	        \node at (0, -1.2){$7$};
	        \node at (1.2, -1.2){$8$};
        \end{scope}
    \end{tikzpicture}
    \caption{Eight-velocity model (D2Q8).}
    \label{fig:D2Q8}
\end{figure}

In the following, we will always consider a moment matrix $\momentsmatrix$ built such that $\mathbb{H}\M (\P \bF) = \mathbf{0}$, for the sake of simplicity. With these notations, we note $\mathbf{C}$ the collision matrix written in the moments basis, \textit{i.e.}
\begin{align}
\label{eq:Omega_C}
	\Omega = \momentsmatrix^{-1} \mathbf{C} \momentsmatrix.
\end{align}
The definition of the collision matrix $\Omega$ comes down to the definition of $\mathbf{C}$. In the following, we look for a collision matrix in the moments basis under the block matrix form
\begin{align}
	\label{eq:def_C_alpha}
	\mathbf{C} = 
	\begin{pmatrix}
		\alpha \mathbf{I}_{p} & \mathbf{0}_{p \times dp} & \mathbf{0}_{p\times hp} \\
		\mathbf{0}_{dp \times p} & \tilde{\mathbf{C}} & \mathbf{0}_{dp \times hp} \\
		\mathbf{0}_{hp \times p} & \mathbf{0}_{hp \times dp} & \alpha \mathbf{I}_{hp}
	\end{pmatrix},
\end{align}
where $\alpha >0$ is a free coefficient and $\tilde{\mathbf{C}} \in \mathcal{M}_{dp}(\mathbb{R})$ is a square matrix to be defined. This form of $\mathbf{C}$ assumes a relaxation of zeroth- and high-order moments with the scalar parameter $\alpha$, while first-order moments are relaxed with the collision matrix $\tilde{\mathbf{C}}$.  With this assumption and using \eqref{eq:PfromM}, we can check that
\begin{align}
	\P \Omega = \P \momentsmatrix^{-1} \mathbf{C} \momentsmatrix = \alpha \P,
\end{align}
so that $\P \Omega (\M(\P \bF) - \bF) = \mathbf{0}$ and conservation can be systematically ensured. Eventually left-multiplying \eqref{eq:kinetic_with_Omega} by $\P$ yields the following conservation equation for $\bbu^\varepsilon = \P \bF$:
\begin{align}
\label{eq:conservation_uepsilon}
	\dpar{\bbu^\varepsilon}{t} + \sum_{i=1}^d \dpar{(\P \Lambda_i \bF)}{x_i} = \mathbf{0}.
\end{align}
The fluxes can then be evaluated with a Chapman-Enskog expansion as in the next section.

\subsection{Chapman-Enskog expansion}

First note that Eq.~\eqref{eq:conservation_uepsilon} can also be written
\begin{align}
	\dpar{\bbu^\varepsilon}{t} + \boldsymbol{\nabla_x} \cdot \left[ \begin{pmatrix}
		\P \Lambda_1 \\ \vdots \\ \P \Lambda_d
	\end{pmatrix} \bF \right] = \mathbf{0}.
\end{align}
Similarly to the Chapman-Enskog expansion performed in App.~\ref{app:CE_BGK}, we have from Eq.~\eqref{eq:kinetic_with_Omega},
\begin{align}	
	\label{eq:Chapman_F}
	\bF = \M(\P \bF) - \varepsilon \frac{\ell}{||\Lambda||} \Omega^{-1} \left( \dpar{\bF}{t} + \sum_{j=1}^d \Lambda_j \dpar{\bF}{x_j} \right).
\end{align}
Then, noticing that $\bF = \M (\P \bF) + \mathcal{O}(\varepsilon)$ and left-multiplying by $\P \Lambda_i$ yields
\begin{align}
	\begin{pmatrix}
		\P \Lambda_1 \\ \vdots \\ \P \Lambda_d	
	\end{pmatrix}
	\bF = 
	\begin{pmatrix}
		\mathbf{f}_1(\bbu^\varepsilon) \\ \vdots \\ \mathbf{f}_d(\bbu^\varepsilon)
	\end{pmatrix}
	- \varepsilon \frac{\ell}{||\Lambda||}
	\begin{pmatrix}
		\P \Lambda_1 \\ \vdots \\ \P \Lambda_d
	\end{pmatrix}
	\Omega^{-1} \left( \dpar{\M(\P \bF)}{t} + \sum_{j=1}^d \Lambda_j \dpar{\M(\P \bF)}{x_j} \right) + \mathcal{O}(\varepsilon^2),
\end{align}
where we used the fact that $\P \Lambda_i \M(\P \bF) = \bbf_i(\bbu^\varepsilon)$. Furthermore, using \eqref{eq:P1fromM}, we have
\begin{align}
	\label{eq:Omega_C_permutation}
	\begin{pmatrix}
		\P \Lambda_1 \\ \vdots \\ \P \Lambda_d 
	\end{pmatrix} 
	\Omega^{-1} = 
	\begin{pmatrix}
		\mathbf{0}_{dp \times p} & \mathbf{I}_{dp} & \mathbf{0}_{dp \times hp}
	\end{pmatrix}
	\mathbf{C}^{-1} \momentsmatrix = 
	\tilde{\mathbf{C}}^{-1}
	\begin{pmatrix}
		\P \Lambda_1 \\ \vdots \\ \P \Lambda_d 
	\end{pmatrix}.
\end{align}
This leads to
\begin{align}
	\begin{pmatrix}
		\P \Lambda_1 \\ \vdots \\ \P \Lambda_d	
	\end{pmatrix}
	\bF = 
	\begin{pmatrix}
		\mathbf{f}_1(\bbu^\varepsilon) \\ \vdots \\ \mathbf{f}_d(\bbu^\varepsilon)
	\end{pmatrix}
	- \varepsilon \frac{\ell}{||\Lambda||}
	\tilde{\mathbf{C}}^{-1} \left\{ \dpar{}{t}
	\begin{pmatrix}
		\bbf_1(\bbu^\varepsilon) \\ \vdots \\ \bbf_d(\bbu^\varepsilon)
	\end{pmatrix} + \sum_{j=1}^d 
	\begin{pmatrix}
		\P \Lambda_1 \\ \vdots \\ \P \Lambda_d
	\end{pmatrix} \Lambda_j \dpar{\M(\P \bF)}{x_j} \right\} + \mathcal{O}(\varepsilon^2).
\end{align}
Using similar chain rules as in App.~\ref{app:CE_BGK} gives
\begin{align}
	& \dpar{}{t}
	\begin{pmatrix}
		\bbf_1(\bbu^\varepsilon) \\ \vdots \\ \bbf_d(\bbu^\varepsilon)
	\end{pmatrix} =  - 
	\mathbf{J}_{\bbf}
	\boldsymbol{\nabla}_{\boldsymbol{x}} \bbu^\varepsilon + \mathcal{O}(\varepsilon),  \qquad \sum_{j=1}^d 
	\begin{pmatrix}
		\P \Lambda_1 \\ \vdots \\ \P \Lambda_d
	\end{pmatrix}
	\Lambda_j \dpar{\M(\P \bF)}{x_j} =
	\mathbf{J}_{\Lambda}
	\boldsymbol{\nabla}_{\boldsymbol{x}} \bbu^\varepsilon,
\end{align}
with
\begin{align}
	\mathbf{J}_{\bbf} := 
	\begin{pmatrix}
		(\mathbf{f}_1'(\bbu^\varepsilon))^2 & \hdots & \mathbf{f}_1'(\bbu^\varepsilon) \mathbf{f}_d'(\bbu^\varepsilon) \\
		\vdots & \ddots & \vdots \\
		\mathbf{f}_d'(\bbu^\varepsilon) \mathbf{f}_1'(\bbu^\varepsilon) & \hdots & (\mathbf{f}_d'(\bbu^\varepsilon))^2
	\end{pmatrix}, \qquad \mathbf{J}_\Lambda :=
	\begin{pmatrix}
		\P \Lambda_1^2 \M'(\bbu^\varepsilon) & \hdots & \P \Lambda_1 \Lambda_d \M'(\bbu^\varepsilon) \\ 
		\vdots & \ddots & \vdots \\
		\P \Lambda_d \Lambda_1 \M'(\bbu^\varepsilon) & \hdots & \P \Lambda_d^2 \M'(\bbu^\varepsilon)
	\end{pmatrix}.
\end{align}
Finally, we obtain
\begin{align}
	\dpar{\bbu^\varepsilon}{t} + \boldsymbol{\nabla}_{\boldsymbol{x}} \cdot
	\begin{pmatrix}
		\bbf_1(\bbu^\varepsilon) \\ \vdots \\ \bbf_d(\bbu^\varepsilon)
	\end{pmatrix}
	= \varepsilon \boldsymbol{\nabla}_{\boldsymbol{x}} \cdot \left\{ \frac{\ell}{||\Lambda||} \tilde{\mathbf{C}}^{-1}
	(\mathbf{J}_{\Lambda} - \mathbf{J}_{\bbf} ) \boldsymbol{\nabla}_{\boldsymbol{x}} \bbu^\varepsilon \right\} + \mathcal{O}(\varepsilon^2).
\end{align}
This is an approximation of \eqref{eq:target_convection_diffusion} up to the first-order in $\varepsilon$ if we can ensure that
\begin{align}
	\varepsilon \frac{\ell}{||\Lambda||} \tilde{\mathbf{C}}^{-1} (\mathbf{J}_{\Lambda} - \mathbf{J}_{\bbf}) = \mathbf{D}.
\end{align}
Assuming that the matrix $(\mathbf{J}_\Lambda - \mathbf{J}_\bbf)$ is invertible, this leads to
\begin{align}
	\label{eq:relation_C_D}
	\varepsilon \frac{\ell}{||\Lambda||} \tilde{\mathbf{C}}^{-1} = \mathbf{D} \left( \mathbf{J}_\Lambda - \mathbf{J}_\bbf \right)^{-1},
\end{align}
and the inverse of the collision matrix appearing in \eqref{eq:kinetic_with_Omega} ($||\Lambda||/\ell \, \Omega/\varepsilon$) can be computed using \eqref{eq:Omega_C}. Since $\mathbf{D}$ is not assumed to be invertible (it is for example not invertible for the Navier-Stokes equations), \eqref{eq:relation_C_D} cannot be inverted to compute the relaxation matrix $\Omega$. But we will see in Sec.~\ref{sec:discretizations_IMEX} that this is not a problem. Thanks to the use of specific implicit time integrations for the collision term, a numerical method can be designed without the need for inverting $\mathbf{D}$. Hence, as in the one-dimensional case~\cite{wissocq2023Kinetic}, the convection-diffusion problem \eqref{eq:target_convection_diffusion} can be approximated by the kinetic model \eqref{eq:kinetic_with_Omega} under two assumptions:
\begin{itemize}
	\item[(i)] the matrix $\mathbf{J}_\Lambda - \mathbf{J}_\bbf$ is invertible,
	\item[(ii)] the second-order consistency error in $\varepsilon$ can be neglected, \textit{i.e.} $\varepsilon \ll 1$.
\end{itemize}

{\color{black}Regarding assumption (i), it is shown in \cite{Bouchut} that, for a system admitting entropies, having monotone entropies implies that  the matrix 
$$\big (\bA_0\otimes\mathbf{I}_d\big ) \big (\mathbf{J}_\Lambda - \mathbf{J}_\bbf\big )$$ 
is symmetric and positive definite if the eigenvalues of $\M'$ are in $[0,+\infty[$. This is theorem 2.7 of \cite{Bouchut}: the conclusion is that $\mathbf{J}_\Lambda - \mathbf{J}_\bbf$ is invertible as soon as the Maxwelians are monotone. It also is easy to show that with the classical wave models D2Q(2n) and D3Q(3n), the matrix $\mathbf{J}_\Lambda$ is diagonal. }

\begin{example}[Four-wave model]
	Following the notations of Example~\ref{ex:moments_matrix_D2Q4} for the kinetic speeds of the four-wave model, we have $\mathbf{J}_\Lambda = a^2 \mathbf{I}_p$. Furthermore, if the inviscid case of \eqref{eq:target_convection_diffusion} is symmetrizable, then the matrix $\mathbf{J}_\bbf$ is also symmetrizable. Hence, if $a$ is large enough, the matrix $\mathbf{J}_\Lambda - \mathbf{J}_\bbf = a^2 \mathbf{I}_p - \mathbf{J}_\bbf$ is invertible.
\end{example}

Regarding assumption (ii), it is paramount to have a correct estimation of the Knudsen number in order to check if it can be considered reasonably small. Assuming that $||\tilde{\mathbf{C}}||=\mathcal{O}(1)$, \eqref{eq:relation_C_D} yields
\begin{align}
	\varepsilon \approx \frac{||\Lambda||}{\ell} \left \Vert \mathbf{D}(\mathbf{J}_\Lambda - \mathbf{J}_\bbf)^{-1} \right \Vert.
\end{align}
Noticing that $\mathbf{J}_\Lambda$ is proportional to $||\Lambda||^2$, we can observe that
\begin{align}
	\label{eq:def_kn}
	\varepsilon \approx \frac{||\mathbf{D}||}{||\Lambda|| \ell},
\end{align}
which is the general definition of Knudsen number we will adopt in the following. As in~\cite{wissocq2023Kinetic}, note that $\varepsilon$ is inversely proportional to $||\Lambda||$, which allows us to arbitrarily decrease the Knudsen number, hence the consistency error, by increasing the norm of the kinetic speeds.

A last discussion can be raised regarding the role of the parameter $\alpha$ in the collision matrix $\mathbf{C}$ written in the moment space \eqref{eq:def_C_alpha}. We see that $\alpha$ never appears in the Chapman-Enskog expansion of this section, nor in the condition \eqref{eq:relation_C_D} that should be satisfied to approach \eqref{eq:target_convection_diffusion} up to the second-order in $\varepsilon$. This feature can be explained by the different behavior of the moments computed by the matrix $\momentsmatrix$. According to \eqref{eq:def_C_alpha}, see that the relaxation parameter $\alpha$ is applied to two types of moments: the first-order moments (obtained by application of the projector $\P$) and the high-order ones (obtained by application of the matrix $\mathbb{H}$). Regarding the first-order moments, since they are, by definition, collision invariants, their relaxation parameter has absolutely no influence on the conservation equation. This property appears when \eqref{eq:conservation_uepsilon} has been obtained. Regarding the high-order moments, their relaxation parameter is expected to have an impact on the conservation equation of $\bbu^\varepsilon$, but only in the $\mathcal{O}(\varepsilon^2)$ related terms. This is for example demonstrated in~\cite{SHAN2006}, where the \emph{moments cascade}, \textit{i.e.} the way high-order moments can affect high-order terms in $\varepsilon$, is exhibited. Note that the main principle of this moments cascade is very general and does not depend on the adopted definition of moments. Consequently, since, in the proposed work, we want to target the convection-diffusion problem \eqref{eq:target_convection_diffusion} up to the second-order in $\varepsilon$, the parameter $\alpha$ is free in our kinetic model. Also note that it only appears by its inverse $1/\alpha$ since only the inverse of the collision matrix is involved in the numerical methods of Sec.~\ref{sec:discretizations_IMEX}. Notably, left-multiplying \eqref{eq:Chapman_F} by $\mathbb{H}$, we obtain
\begin{align}
\label{eq:high-order_moments_equation}
	\mathbb{H} \bF = \mathbb{H} \M (\P \bF) - \varepsilon \frac{\ell}{||\Lambda||} \frac{1}{\alpha} \left( \dpar{(\mathbb{H} \bF)}{t} + \sum_{j=1}^d \mathbb{H} \Lambda_j \dpar{\bF}{x_j} \right),
\end{align}
which exhibits how the high-order moments can be affected by the parameter $1/\alpha$. In the following, we will consider the limit case $1/\alpha=0$, which will be referred to as a \emph{regularized} kinetic models.

\begin{definition}[Regularized kinetic models]
	A kinetic model with a collision matrix as \eqref{eq:kinetic_with_Omega} is referred to as a regularized model when $\Omega$ is such that
	\begin{align}
		\Omega^{-1} = \momentsmatrix^{-1} 
	\begin{pmatrix}
		\mathbf{0}_{p\times p} & \mathbf{0}_{p \times dp} & \mathbf{0}_{p\times hp} \\
		\mathbf{0}_{dp \times p} & \tilde{\mathbf{C}}^{-1} & \mathbf{0}_{dp \times hp} \\
		\mathbf{0}_{hp \times p} & \mathbf{0}_{hp \times dp} & \mathbf{0}_{hp \times hp}
	\end{pmatrix}
 	\momentsmatrix,
	\end{align}
	where $\momentsmatrix$ is a collision matrix and $\tilde{\mathbf{C}}^{-1}$ is related to $\mathbf{D}$ through \eqref{eq:relation_C_D}.
\end{definition}

Following \eqref{eq:high-order_moments_equation}, a regularized model exactly yields $\mathbb{H}\bF = \mathbb{H} \M (\P \bF)$, meaning that the high-order moments are immediately relaxed to the Maxwellian state. The denomination \emph{regularized model}  originates from the lattice Boltzmann  framework, where such a modeling of the collision term is sometimes considered~\cite{Latt2006, Malaspinas2015, Mattila2017, Coreixas2020}. Note that in the LBM, these models are commonly adopted in order to increase the numerical stability of the method~\cite{Coreixas2020, Wissocq2020}. In the proposed kinetic model, this choice is only made to increase the efficiency of the numerical method by reducing the number of operations and the memory cost, as will be shown in the next section. Interestingly, it is this reason that motivated the first use of regularized models in the LBM community, to the best of our knowledge~\cite{Skordos1993, Ladd2001}.

\subsection{Regularized collision matrix models as Jin-Xin methods}
\label{sec:regul_as_jinxin}

The above discussion introduces the concept of \emph{regularization} of kinetic methods in presence of a collision matrix, leading to $\mathbb{H} \mathbf{F} = \mathbb{H} \M(\P \bF)$, \textit{i.e.} high-order moments are always at the Maxwellian state. Moreover, when $\mathbb{H}$ is built such that $\mathbb{H} \M (\P \bF) = \mathbf{0}$, this leads to $\mathbb{H} \bF = \mathbf{0}$. In this section, we show how this strategy can be used to reduce the rank of the kinetic model from $kp$ to $(d+1)k$, whatever the wave model under consideration. This observation can be used to reduce the memory cost of the kinetic scheme. Furthermore, we show how connections can be established with Jin-Xin methods~\cite{Jin}, where the fluxes of a conservation equation obey an advection-relaxation equation.

\begin{proposition}\label{prop:1}
	Denoting $\bbu^\varepsilon = \P \bF$ and $\bbv_i^\varepsilon = \P \Lambda_i \bF$, the regularized collision matrix model \eqref{eq:kinetic_with_Omega} is equivalent to the following Jin-Xin system
\begin{align}
\label{eq:regul_as_JinXin}
	\dpar{}{t} 
	\begin{pmatrix}
		\bbu^\varepsilon \\ \bbv_1^\varepsilon \\ \vdots \\ \bbv_d^\varepsilon
	\end{pmatrix}
	+ \sum_{i=1}^d  \mathbf{A}_i \dpar{}{x_i}
	\begin{pmatrix}
		{\bbu_i^\varepsilon} \\ \bbv_1^\varepsilon \\ \vdots \\ \bbv_d^\varepsilon
	\end{pmatrix} =
	\begin{bmatrix}
		\mathbf{0} \\ 
		\dfrac{\Vert\Lambda\Vert}{\ell} \dfrac{\tilde{\mathbf{C}}}{\varepsilon}
		\begin{pmatrix}
			\bbf_1(\bbu^\varepsilon) - \bbv_1^\varepsilon \\
			\vdots \\
			\bbf_d(\bbu^\varepsilon) - \bbv_d^\varepsilon
		\end{pmatrix}
	\end{bmatrix},
\end{align}
where 
  $\mathbf{A}_i= \overline{\momentsmatrix} \Lambda_i \overline{\momentsmatrix}^+$, $\overline{\momentsmatrix}$ is a reduced moments matrix defined as
\begin{align}
	\overline{\momentsmatrix} = 
	\begin{pmatrix}
		\P \\ \P \Lambda_1 \\ \vdots \\ \P \Lambda_d
	\end{pmatrix} \in \mathcal{M}_{(d+1)p \times kp}(\mathbb{R}),
\end{align}
and $\overline{\momentsmatrix}^+ \in \mathcal{M}_{kp\times (d+1)p} (\mathbb{R})$ is its pseudo inverse, i.e. the only matrix satisfying $\overline{\momentsmatrix} \overline{\momentsmatrix}^+ \overline{\momentsmatrix} = \overline{\momentsmatrix}$ and $\overline{\momentsmatrix}^+ \overline{\momentsmatrix} \overline{\momentsmatrix}^+ = \overline{\momentsmatrix}^+$.
Eq.~\eqref{eq:regul_as_JinXin} is a Jin-Xin system involving the collision matrix in the subspace of the first-order moments $\tilde{\mathbf{C}}$. Only the matrices $\mathbf{A}_i$ and $\tilde{\mathbf{C}}$ depend on the considered wave model.
\end{proposition}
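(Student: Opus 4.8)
The plan is to avoid manipulating the (singular, in the regularized limit) operator $\Omega$ directly, and instead to project the kinetic equation onto the reduced moment space spanned by $\overline{\momentsmatrix}$, using the regularization constraint to close the transport term. First I would write $\momentsmatrix = \begin{pmatrix}\overline{\momentsmatrix}\\ \mathbb{H}\end{pmatrix}$ and split its inverse column-wise as $\momentsmatrix^{-1} = (\mathbf{R}\mid\mathbf{S})$, with $\mathbf{R}$ the first $(d+1)p$ columns. The identity $\momentsmatrix\momentsmatrix^{-1}=\mathbf{I}_{kp}$ gives $\overline{\momentsmatrix}\mathbf{R}=\mathbf{I}_{(d+1)p}$ and $\mathbb{H}\mathbf{R}=\mathbf{0}$, so $\mathbf{R}$ is a right inverse of $\overline{\momentsmatrix}$ whose range lies in, hence by dimension count equals, $\ker\mathbb{H}$; in particular it satisfies $\overline{\momentsmatrix}\mathbf{R}\overline{\momentsmatrix}=\overline{\momentsmatrix}$ and $\mathbf{R}\overline{\momentsmatrix}\mathbf{R}=\mathbf{R}$, so we may take $\overline{\momentsmatrix}^+=\mathbf{R}$. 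Dually, $\momentsmatrix^{-1}\momentsmatrix=\mathbf{I}_{kp}$ reads $\mathbf{R}\,\overline{\momentsmatrix}+\mathbf{S}\,\mathbb{H}=\mathbf{I}_{kp}$, which is the key reconstruction identity: applied to $\bF$ it yields $\bF=\overline{\momentsmatrix}^+(\bbu^\varepsilon,\bbv_1^\varepsilon,\ldots,\bbv_d^\varepsilon)^T$ as soon as $\mathbb{H}\bF=\mathbf{0}$.

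Next I would establish that $\mathbb{H}\bF=\mathbf{0}$ genuinely holds for the regularized model. For this I would pass to moment space by left-multiplying \eqref{eq:kinetic_with_Omega} by $\momentsmatrix$, using $\momentsmatrix\Omega=\mathbf{C}\momentsmatrix$ and $\momentsmatrix(\M(\P\bF)-\bF)=\mathbf{m}^\M-\mathbf{m}^\bF$. Since $\mathbb{H}\M(\P\bF)=\mathbf{0}$ one has $\mathbf{m}^\M-\mathbf{m}^\bF=(\mathbf{0},\bbf_1(\bbu^\varepsilon)-\bbv_1^\varepsilon,\ldots,\bbf_d(\bbu^\varepsilon)-\bbv_d^\varepsilon,-\mathbf{h})^T$, and the block-diagonal form \eqref{eq:def_C_alpha} of $\mathbf{C}$ decouples the three groups of moments: the zeroth-order block gives $\alpha\cdot\mathbf{0}=\mathbf{0}$ (conservation), the first-order block relaxes through $\tilde{\mathbf{C}}$, and the high-order block obeys the stiff relaxation $\partial_t\mathbf{h}+\cdots=-\frac{\Vert\Lambda\Vert}{\ell}\frac{\alpha}{\varepsilon}\mathbf{h}$. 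The regularized model is precisely the limit $1/\alpha\to0$, in which this last equation degenerates to the algebraic constraint $\mathbf{h}=\mathbb{H}\bF=\mathbf{0}$, consistent with the definition of the regularized $\Omega^{-1}$.

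It then remains to read off the top $(d+1)p$ moment equations and close the transport term. The time derivative is immediate, $\overline{\momentsmatrix}\partial_t\bF=\partial_t(\bbu^\varepsilon,\bbv_1^\varepsilon,\ldots,\bbv_d^\varepsilon)^T$. For the spatial term I would substitute the reconstruction $\bF=\overline{\momentsmatrix}^+(\bbu^\varepsilon,\bbv_1^\varepsilon,\ldots,\bbv_d^\varepsilon)^T$, valid pointwise because $\mathbb{H}\bF\equiv\mathbf{0}$ and hence differentiable, into $\overline{\momentsmatrix}\Lambda_i\partial_{x_i}\bF$, which becomes $\overline{\momentsmatrix}\Lambda_i\overline{\momentsmatrix}^+\partial_{x_i}(\bbu^\varepsilon,\bbv_1^\varepsilon,\ldots,\bbv_d^\varepsilon)^T=\mathbf{A}_i\partial_{x_i}(\cdots)$. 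Combined with the collision blocks computed above, this is exactly \eqref{eq:regul_as_JinXin}. For the converse (full equivalence) I would start from a solution of the Jin-Xin system, define $\bF:=\overline{\momentsmatrix}^+(\bbu^\varepsilon,\bbv_1^\varepsilon,\ldots,\bbv_d^\varepsilon)^T$, check that $\mathbb{H}\bF=\mathbb{H}\overline{\momentsmatrix}^+(\cdots)=\mathbf{0}$ and $\overline{\momentsmatrix}\bF=(\bbu^\varepsilon,\bbv_1^\varepsilon,\ldots,\bbv_d^\varepsilon)^T$, and retrace the projections to verify the reconstructed $\bF$ solves the regularized kinetic equation.

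The hard part is making the regularized right-hand side rigorous: in the limit $1/\alpha=0$ the operator $\Omega$ itself does not exist (only $\Omega^{-1}$ is defined, and it is singular), so the argument must be carried out in moment space, where the stiff high-order relaxation cleanly degenerates into the constraint $\mathbf{h}=\mathbf{0}$. The second delicate point is the choice of pseudoinverse: the two stated conditions characterize a reflexive generalized inverse but do not pin it down uniquely, and the proof specifically requires the one with range $\ker\mathbb{H}$, namely $\mathbf{R}$, so that $\overline{\momentsmatrix}^+\overline{\momentsmatrix}$ is the projector onto the regularized subspace and the reconstruction identity holds; any other generalized inverse (for instance the Moore--Penrose one) would reconstruct an $\bF$ with $\mathbb{H}\bF\neq\mathbf{0}$ and fail to close the transport term into the claimed $\mathbf{A}_i$.
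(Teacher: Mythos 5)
Your proof is correct and follows essentially the same route as the paper's: project the kinetic equation onto the reduced moments, use the regularization constraint $\mathbb{H}\bF=\mathbf{0}$ to reconstruct $\bF=\overline{\momentsmatrix}^+(\bbu^\varepsilon,\bbv_1^\varepsilon,\ldots,\bbv_d^\varepsilon)^T$, and close the transport term as $\mathbf{A}_i=\overline{\momentsmatrix}\Lambda_i\overline{\momentsmatrix}^+$; your matrix $\mathbf{R}$, i.e.\ the block of the first $(d+1)p$ columns of $\momentsmatrix^{-1}$, is exactly the matrix the paper exhibits as $\overline{\momentsmatrix}^+$. Two of your refinements are genuine improvements on the paper's write-up. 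First, you derive the right-hand side entirely in moment space with the block-diagonal $\mathbf{C}$ at finite $\alpha$ and only then pass to $1/\alpha=0$; the paper's proof instead left-multiplies $\Omega(\M(\P\bF)-\bF)$ by the first-order projectors, which, taken literally, is undefined for the regularized model (only $\Omega^{-1}$ exists there), so your reading is the rigorous version of that step. Second, your caveat on the generalized inverse is mathematically correct: since $\overline{\momentsmatrix}$ has full row rank, every right inverse of $\overline{\momentsmatrix}$ satisfies the two conditions stated in the proposition (the second even follows from the first), so those conditions do not single out a unique matrix as the statement claims; the proof genuinely requires the particular right inverse whose range is $\ker\mathbb{H}$, which is what both you and the paper construct. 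One nuance: in the paper's own examples (e.g.\ the D2Q4 model) the rows of $\mathbb{H}$ are orthogonal to those of $\overline{\momentsmatrix}$, so there the Moore--Penrose inverse coincides with $\mathbf{R}$; your claim that it would fail holds only for general admissible $\mathbb{H}$. Finally, you sketch the converse implication, which the paper leaves implicit but which is indeed needed for the stated equivalence; it goes through along the lines you indicate.
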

\begin{proof}
	Left-multiplying \eqref{eq:kinetic_with_Omega} by $\overline{\momentsmatrix}$ leads to
	\begin{align}
		\dpar{}{t}
		\begin{pmatrix}
			\bbu^\varepsilon \\ \bbv_1^\varepsilon \\ \vdots \\ \bbv_d^\varepsilon
		\end{pmatrix} + \sum_{i=1}^d
		\overline{\momentsmatrix} \Lambda_i
		\dpar{\bF}{x_i} = 
		\begin{bmatrix}
			\mathbf{0} \\
			\dfrac{||\Lambda||}{\ell}
			\begin{pmatrix}
				\P \Lambda_1 \\ \vdots \\\P \Lambda_d
			\end{pmatrix}
			\dfrac{\Omega}{\varepsilon}(\M(\P \bF) - \bF)
		\end{bmatrix}.
		\label{eq:demo_jinxin_1}
	\end{align}
	Using \eqref{eq:Omega_C_permutation} with $\Omega$ instead of $\Omega^{-1}$ and the fact that $\P \Lambda_i \M (\P \bF) = \bbf_i(\bbu^\varepsilon)$ yields
	\begin{align}
		\begin{pmatrix}
			\P \Lambda_1 \\ \vdots \\ \P \Lambda_d 
		\end{pmatrix}
		\frac{\Omega}{\varepsilon} (\M (\P \bF) - \bF) = \frac{\tilde{\mathbf{C}}}{\varepsilon} 
		\begin{pmatrix}
			\bbf_1(\bbu^\varepsilon) - \bbv_1^\varepsilon \\ 
			\vdots \\
			\bbf_d(\bbu^\varepsilon) - \bbv_d^\varepsilon
		\end{pmatrix}, 
	\end{align}
	so that the right-hand-side term of Eq.~\eqref{eq:regul_as_JinXin} can be obtained. To obtain a Jin-Xin like model, one needs to close the system with the variables $(\bbu^\varepsilon, \bbv_i^\varepsilon)$ only, while $\bF$ still appears in the transport term of \eqref{eq:demo_jinxin_1}. To deal with this term, we can notice that
\begin{align}
	\bF = \momentsmatrix^{-1} \mathbf{m}^\bF, \qquad \mathbf{m}^\bF = [\bbu^\varepsilon,
		\bbv_1^\varepsilon, \hdots,\bbv_d^\varepsilon,
		\mathbf{0}]^T,
\end{align}
which is a function of $(\bbu^\varepsilon, \bbv_i^\varepsilon)$ only, where we used the fact that $\mathbb{H} \bF = \mathbb{H} \M (\bbu^\varepsilon) = \mathbf{0}$ (regularization). Using a chain rule, 
\begin{align}
	\dpar{\bF}{x_i} = \momentsmatrix^{-1} \dpar{\mathbf{m}^\bF}{x_i} = 
	\momentsmatrix^{-1}
	\begin{pmatrix}
		\mathbf{I}_p & \mathbf{0}_{p \times dp} \\
		\mathbf{0}_{dp \times p} & \mathbf{I}_{dp} \\
		\mathbf{0}_{hp \times p} & \mathbf{0}_{hp \times dp}
	\end{pmatrix}
	\dpar{}{x_i}
	\begin{pmatrix}
		\bbu^\varepsilon \\ \bbv_1^\varepsilon \\ \vdots \\ \bbv_d^\varepsilon
	\end{pmatrix}.
\end{align}
We finally show that the matrix 
\begin{align}
	\overline{\momentsmatrix}^+ := \momentsmatrix^{-1} 
	\begin{pmatrix}
		\mathbf{I}_p & \mathbf{0}_{p \times dp} \\
		\mathbf{0}_{dp \times p} & \mathbf{I}_{dp} \\
		\mathbf{0}_{hp \times p} & \mathbf{0}_{hp \times dp}
	\end{pmatrix}
\end{align}
is the pseudo-inverse of $\overline{\momentsmatrix}$ to conclude the demonstration. Noticing that
\begin{align}
	\overline{\momentsmatrix} = 
	\begin{pmatrix}
		\mathbf{I}_p & \mathbf{0}_{p \times dp} & \mathbf{0}_{p \times hp} \\
		\mathbf{0}_{dp \times p} & \mathbf{I}_{dp} & \mathbf{0}_{dp \times hp}
	\end{pmatrix} \momentsmatrix,
\end{align}
we have the direct result $\overline{\momentsmatrix} \overline{\momentsmatrix}^+ = \mathbf{I}_{(d+1)p}$, which implies $\overline{\momentsmatrix} \overline{\momentsmatrix}^+ \overline{\momentsmatrix} = \overline{\momentsmatrix}$ and $\overline{\momentsmatrix}^+ \overline{\momentsmatrix} \overline{\momentsmatrix}^+ = \overline{\momentsmatrix}^+$.
\end{proof}

\begin{example}[Four-wave model in two dimensions]
\label{ex:4waves_2D_JinXin}
	Considering the matrices $\Lambda_1$, $\Lambda_2$ and $\momentsmatrix$ of Example \ref{ex:moments_matrix_D2Q4}, we have
	\begin{align}
		\overline{\momentsmatrix} = 
		\begin{pmatrix}
			1 & 1 & 1 & 1 \\
			-a & a & 0 & 0 \\
			0 & 0 & -a & a
		\end{pmatrix} \otimes \mathbf{I}_p, \qquad
		\overline{\momentsmatrix}^+ = 
		\begin{pmatrix}
			1/4 & -1/(2a) & 0 \\
			1/4 & 1/(2a) & 0 \\
			1/4 & 0 & -1/(2a) \\
			1/4 & 0 & 1/(2a)
		\end{pmatrix} \otimes \mathbf{I}_p,
	\end{align}
	leading to
	\begin{align}
		\mathbf{A}_1 = 
		\begin{pmatrix}
			0 & 1 & 0 \\
			a^2/2 & 0 & 0 \\
			0 & 0 & 0
		\end{pmatrix} \otimes \mathbf{I}_p, \qquad
		\mathbf{A}_2 = 
		\begin{pmatrix}
			0 & 0 & 1 \\
			0 & 0 & 0 \\
			a^2/2 & 0 & 0
		\end{pmatrix}.
	\end{align}
	This simply reads
	\begin{align}
		& \dpar{\bbu^\varepsilon}{t} + \dpar{\bbv_1^\varepsilon}{x_1} + \dpar{\bbv_2^\varepsilon}{x_2} = \mathbf{0}, \\
		& \dpar{}{t}
		\begin{pmatrix}
			\bbv_1^\varepsilon \\ \bbv_2^\varepsilon
		\end{pmatrix} + \frac{a^2}{2} \dpar{}{x_1}
		\begin{pmatrix}
			\bbu^\varepsilon \\ \mathbf{0}
		\end{pmatrix}
		+ \frac{a^2}{2} \dpar{}{x_2}
		\begin{pmatrix}
			\mathbf{0} \\ \bbu^\varepsilon
		\end{pmatrix} = \frac{||\Lambda||}{\ell} \frac{\tilde{\mathbf{C}}}{\varepsilon}
		\begin{pmatrix}
			\bbf_1(\bbu^\varepsilon) - \bbv_1^\varepsilon \\
			\bbf_2(\bbu^\varepsilon) - \bbv_2^\varepsilon
		\end{pmatrix}.
	\end{align}
	This is exactly the two-dimensional model detailed in~\cite{Jin}, except that a collision matrix is introduced in the relaxation of $(\bbv_1^\varepsilon, \bbv_2^\varepsilon)$.
\end{example}

\begin{example}[Eight-velocity model in two dimensions]
	Considering the regularized kinetic model described in Example~\ref{ex:moments_matrix_D2Q8}, we have
	\begin{align}
		\overline{\momentsmatrix} = 		\begin{pmatrix}
			1 & 1 & 1 & 1 & 1 & 1 & 1 & 1 \\
			a & a & 0 & -a & -a & -a & 0 & a \\
			0 & a & a & a & 0 & -a & -a & -a
		\end{pmatrix} \otimes \mathbf{I}_p , \qquad 
		\overline{\momentsmatrix}^+ = 
		\begin{pmatrix}
			1/8 & 1/(6a) & 0 \\
			1/8 & 1/(6a) & 1/(6a) \\
			1/8 & 0 & 1/(6a) \\
			1/8 & -1/(6a) & 1/(6a) \\
			1/8 & -1/(6a) & 0 \\
			1/8 & -1/(6a) & -1/(6a) \\
			1/8 & 0 & -1/(6a) \\
			1/8 & 1/(6a) & -1/(6a)
		\end{pmatrix},
	\end{align}
	leading to
	\begin{align}
		\mathbf{A}_1 = 
		\begin{pmatrix}
			0 & 1 & 0 \\
			3a^2/4 & 0 & 0 \\
			0 & 0 & 0 
		\end{pmatrix} \otimes \mathbf{I}_p, \qquad
		\mathbf{A}_2 = 
		\begin{pmatrix}
			0 & 0 & 1 \\
			0 & 0 & 0 \\
			3a^2/4 & 0 & 0
		\end{pmatrix} \otimes \mathbf{I}_p.
	\end{align}
\end{example}

\begin{remark}
	Note that the eigenvalues of the hyperbolic (left-hand-side) term of the Jin-Xin model~\eqref{eq:regul_as_JinXin} are in general different from those of the kinetic model~\eqref{eq:kinetic_with_Omega}. Notably with the Example~\ref{ex:4waves_2D_JinXin}, the eigenvalues of the Jin-Xin advection term are: $(0, \pm a/\sqrt{2})$, while the eigenvalues of the kinetic system are: $(0, \pm a)$. 
\end{remark}

\section{Numerical discretization: arbitrarily order IMEX methods}
\label{sec:discretizations_IMEX}

In this section, we recall the time and space discretizations proposed in previous work~\cite{Torlo, Abgrall2023, wissocq2023Kinetic} to  build robust numerical methods for approximating \eqref{eq:kinetic_with_Omega}. We start by assuming a semi-discrete form of the kinetic system \eqref{eq:kinetic_with_Omega} involving a discrete operator $\delta_{x_i}^q$, that approximates the space derivatives $\partial_{x_i}$ with a $q$th-order of accuracy. This reads
\begin{align}
\label{eq:kinetic_system_omega_semidiscrete}
    \frac{\mathrm{d} \bF}{\mathrm{d} t} = - \sum_{i=1}^d \Lambda_i \delta_{x_i}^q \bF + \frac{||\Lambda||}{\ell} \frac{\Omega}{\varepsilon} (\P \bF) \left( \M(\P \bF) - \bF \right) := \mathcal{F}(\bF).
\end{align}
We first recall the main principle and properties of the arbitrary-order schemes adopted in previous work to integrate this equation in time~\cite{Torlo, Abgrall2023, wissocq2023Kinetic}: an implicit-explicit first-order scheme and high-order schemes based on the deferred correction (DeC) algorithm. We then discuss the space discretization to build robust methods of arbitrary order in space and time, the numerical stability of the ensuing numerical methods and the introduction of boundary conditions.

\subsection{First-order explicit scheme in time}
\label{sec:first_order_time}

Following~\cite{Torlo, Abgrall2023, wissocq2023Kinetic}, we use a first-order explicit integration for the convective part and a first-order implicit integration for the stiff collision term. Integrating between $t_n$ and $t_{n+1} := t_n + \Delta t$, this reads
\begin{align}
\label{eq:first-order_implicit}
    \bF (t_{n+1}) - \bF(t_n) = - \Delta t \sum_{i=1}^d \Lambda_i \delta_{x_i}^q \bF (t_n) + \Delta t \, \frac{||\Lambda||}{\ell} \frac{\Omega }{\varepsilon}(\P \bF (t_{n+1})) \left[ \M(\P \bF (t_{n+1})) - \bF(t_{n+1}) \right].
\end{align}
This scheme is implicit, but can be made fully explicit by first applying the projector $\P$ to the solution at time $t_{n+1}$, leading to
\begin{align}
\label{eq:first-order_IMEX_PF}
    \P \bF(t_{n+1}) = \P \bF (t_n) - \Delta t \sum_{i=1}^d \P \Lambda_i \delta_{x_i}^q \bF(t_n).
\end{align}
Then defining
\begin{align}
	\hat{\Omega}_{n+1} = \Delta t \frac{||\Lambda||}{\ell} \frac{\Omega}{\varepsilon} (\P \bF(t_{n+1})),
\end{align}
$\bF(t_{n+1})$ can be explicitly computed  by reversing the implicitness in \eqref{eq:first-order_implicit}, leading to
\begin{align}
\label{eq:first_order_IMEX_F}
    \bF(t_{n+1}) = \left[ \mathbf{I}_{kp} + \hat{\Omega}_{n+1}^{-1} \right]^{-1} \left\{ \hat{\Omega}_{n+1}^{-1} \left[ \bF(t_n) -\Delta t \sum_{i=1}^d \Lambda_i \delta_{x_i}^q \bF(t_n) \right] + \M (\P \bF(t_{n+1})) \right\}.
\end{align}
As in~\cite{wissocq2023Kinetic}, this is an explicit scheme involving the inverse of the collision matrix, which can be computed even when $\mathbf{D}$ is not invertible thanks to \eqref{eq:relation_C_D}.

\subsection{High-order scheme in time: deferred correction}
\label{sec:DeC}

In order to build high-order discretizations in time of \eqref{eq:kinetic_system_omega_semidiscrete}, we adopt a DeC algorithm. For more details regarding DeC applied to kinetic models, the interested reader is referred to~\cite{Torlo, Abgrall2023, wissocq2023Kinetic}. In this section, we briefly recall the main principle of the DeC and detail the explicit iterative schemes obtained when discretizing \eqref{eq:kinetic_system_omega_semidiscrete} at arbitrary order in time.

Consider a time integration of \eqref{eq:kinetic_system_omega_semidiscrete} using an implicit Runge-Kutta scheme of order $q$ involving $s$ sub-time nodes denoted as $c_1 = 0 < c_2 < .. < c_s = 1$. We note $\mathbf{F}_i$, for $i \in \{1, \dots, s\}$, the solution corresponding to the sub-time node $c_i$ and define the vector $\hat{\bF} = [\bF_1, \dots, \bF_s]^T$ of size $kps$. Without loss of generality, the RK scheme can be written as $\mathcal{L}^2(\hat{\bF}) = 0$, where $\mathcal{L}^2$ is a ``high-order'' operator. Noticing that this problem may be difficult to solve, especially because it is implicit and non-linear, the purpose of the DeC is to introduce a new ``low-order'' operator $\mathcal{L}^1$ such that $\mathcal{L}^1(\hat{\bF}) = 0$ yields an explicit scheme to solve \eqref{eq:kinetic_system_omega_semidiscrete} with a lower order of accuracy. In practice, we choose $\mathcal{L}^1$ such that the integration of the stiff relaxation term is the same implicit scheme as in $\mathcal{L}^2$, while the transport term is integrated by an explicit scheme. As shown in Sec.~\ref{sec:first_order_time}, the implicitness of the relaxation term is not a problem since it vanishes when applying the projector $\P$. Then, as shown in \cite{Torlo}, the algorithm 
\begin{enumerate}
\item $\hat{\bF}^{(0)}=[\bF(t_n), \dots, \bF(t_n)]^T$,
\item $\hat{\bF}^{(p+1)}$ solution of $\mathcal{L}^1(\hat{\bF}^{(p+1)})=\mathcal{L}^1(\hat{\bF}^{(p)})-\mathcal{L}^2(\hat{\bF}^{(p)})$,
\end{enumerate}
is such that $\hat{\bF}^{(q)}-\hat{\bF}(t_{n+1})=O(\Delta t^q)$ where $\hat{\bF}(t_{n+1})$ is the solution of $\mathcal{L}_2(\hat{\bF})=0$ at time $t_{n+1}=t_n+\Delta t$. As detailed in~\cite{wissocq2023Kinetic}, when applied to a kinetic system such as \eqref{eq:kinetic_system_omega_semidiscrete}, the DeC iteration reads,
\begin{align}
	\forall p \in \{0, \dots, q-1\}, \quad &     \P \hat{\bF}^{(p+1)} = \P \hat{\bF}^{(0)}  - (\mathbf{I}_s \otimes \P) \hat{\mathbf{A}} \sum_{i=1}^d \hat{\Lambda}_i \delta_{x_i}^q \hat{\bF}^{(p)},
    \label{eq:DeC_macros} \\
    &     \hat{\bF}^{(p+1)} = \hat{\Omega}_{(p+1)}^{-1} \left[ \hat{\Omega}_{(p+1)}^{-1} + \hat{\mathbf{A}} \right]^{-1} \left( \hat{\bF}^{(0)} - \hat{\mathbf{A}} \sum_{i=1}^d \hat{\Lambda}_i \delta_{x_i}^q \hat{\bF}^{(p)} \right) + \hat{\mathbf{A}} \left[ \hat{\Omega}_{(p+1)}^{-1} + \hat{\mathbf{A}} \right]^{-1} \hat{\M}^{(p+1)},
    \label{eq:DeC_explicit}
\end{align}
where the following vectors and matrices have been defined,
\begin{align}
	\hat{\M}^{(p)} = 
	\begin{pmatrix}
		\M(\P \bF_1^{(p)}) \\ \vdots \\ \M(\P \bF_s^{(p)})
	\end{pmatrix},  \quad 
	\hat{\Omega}_{(p)} = \frac{||\Lambda||}{\ell}    
	\begin{pmatrix}
        \Omega/\varepsilon (\P \bF_1^{(p)}) & \hdots & 0 \\
        \vdots & \ddots & \vdots \\
        0 & \hdots & \Omega/\varepsilon (\P \bF_s^{(p)})
    \end{pmatrix}, \quad \hat{\Lambda}_i = \mathbf{I}_s \otimes \Lambda_i, \quad \hat{\mathbf{A}} = \Delta t \mathbf{A} \otimes \mathbf{I}_{kp},
\end{align}
and where $\mathbf{A}$ is the matrix of the coefficients of the implicit RK algorithm. The iterative procedure \eqref{eq:DeC_macros}-\eqref{eq:DeC_explicit} is an explicit scheme allowing us to compute $\hat{\bF}^{(q)}$. Note that, as with the explicit scheme of Sec.~\ref{sec:first_order_time}, it only involves $\Omega$ through its inverse matrix, which can be computed even when $\mathbf{D}$ is not invertible as shown by \eqref{eq:relation_C_D}. Finally, the approximated solution at time $t_{n+1}$ can be obtained in a general way as
\begin{align}
	\bF(t_{n+1}) = \bF(t_n) + \Delta t \sum_{k=1}^s b_k \mathcal{F}(\bF_k^{(q)}),
\end{align} 
where $b_k$ are coefficients of the considered RK scheme. In the present work, we will only consider Lobato IIIC schemes, for which the computation of the updated solution is simply done by $\bF(t_{n+1}) = \bF_s^{(q)}$. The matrices $\mathbf{A}$ of the second- and fourth-order Lobato IIIC schemes are recalled below, from~\cite{Hairer},
\begin{align}
	& \mathrm{Second-order}: \qquad \mathbf{A} = 
    \begin{pmatrix}
        1/2 & -1/2 \\
        1/2 & 1/2
    \end{pmatrix},
    \label{eq:LobatoIIIC_O2} \\
    & \mathrm{Fourth-order}: \qquad 
    \mathbf{A} = 
    \begin{pmatrix}
        1/6 & -1/3 & 1/6 \\
        1/6 & 5/12 & -1/12 \\
        1/6 & 2/3 & 1/6
    \end{pmatrix}.
    \label{eq:LobatoIIIC_O4}
\end{align}

\subsection{Space discretizations}
\label{sec:space_disc}

We now discuss the space discretization of the term $\Lambda_i \partial_{x_i}$ with a $q$th-order scheme in space referred to as $\Lambda_i \delta_{x_i}^q$. Without loss of generality, we focus on the one-dimensional case and drop the index $i$ for the space dimension. We consider a uniform space discretization with equally spaced points $x_k$ with a uniform mesh size $\Delta x=x_{k+1}-x_k$. We note $F_{j,k}$ the $j$th component of $\bF$ at point $x_k$ and $\Lambda = \mathrm{diag}(a_j, j \in \{1,\dots kp\})$. In order to systematically ensure the conservativity of the method, we adopt a finite-volume formalism and write the discrete transport term as
\begin{align}
	a_j \delta_x^q F_{j,k} = \frac{\Phi_{j,k+1/2}^q - \Phi_{j,k-1/2}^q}{\Delta x},
\end{align}
where $\Phi_{j,k+1/2}^q$ is an estimation of the flux at order $q$ between the cells centered in $x_k$ and $x_{k+1}$, related to the kinetic speed $a_j$. The fluxes adopted in this work, as previously in~\cite{AbgrallK} and inspired from \cite{iserle}, are provided below.

\paragraph{First-order ($\Phi^1$)} We use the upwind scheme,
\begin{align}
	\Phi_{j,k+1/2}^1 = 
	\begin{cases}
		a_j F_{j,k} \qquad \mathrm{if}\ a_j \geq 0, \\
		a_j F_{j,k+1} \qquad \mathrm{else}.
	\end{cases}
\end{align}

\paragraph{Second-order ($\Phi^2$)} We define
\begin{align}
	\Phi_{j,k+1/2}^2 = 
	\begin{cases}
		a_j \left( \frac{1}{3} F_{j,k+1} + \frac{5}{6} F_{j,k} - \frac{1}{6} F_{j,k-1} \right) \qquad \mathrm{if}\ a_j \geq 0, \\
		a_j \left( -\frac{1}{6} F_{j,k+2} + \frac{5}{6} F_{j,k+1} + \frac{1}{3} F_{j,k} \right) \qquad \mathrm{else}.
	\end{cases}
	\end{align}

\paragraph{Fourth-order ($\Phi^4$)} We define
\begin{align}
	\Phi_{j,k+1/2} = 
	\begin{cases}
		a_j \left( \frac{1}{4} F_{j,k+1} + \frac{13}{12} F_{j,k} - \frac{5}{12} F_{j,k-1} + \frac{1}{12} F_{j,k-2} \right) \qquad \mathrm{if}\ a_j \geq 0, \\
		a_j \left( \frac{1}{12} F_{j,k+3} - \frac{5}{12} F_{j,k+2} + \frac{13}{12} F_{j,k+1} + \frac{1}{4} F_{j,k} \right) \qquad \mathrm{else}.
	\end{cases}
\end{align}

Note that the fourth-order scheme is different from the one adopted in~\cite{wissocq2023Kinetic}, which was a central scheme. A linear stability analysis of the advection term discretized in space with $\delta_x^q$ and in time with either the first-order scheme of Sec.~\ref{sec:first_order_time}, or the DeC algorithm of Sec.~\ref{sec:DeC}, can be performed in the same way as in \cite{wissocq2023Kinetic}. Table \ref{tab:max_CFL_LobatoIIIC} provides the critical values of $\lambda = a \Delta t/\Delta x$ ensuring linear stability in each case, and for different numbers of iterations when the DeC algorithm is concerned.

\begin{table}[!ht]
  \centering
\begin{tabular}{|p{0.15\textwidth}|>{\centering}p{0.05\textwidth}||>{\centering}p{0.05\textwidth}|>{\centering}p{0.05\textwidth}|>{\centering}p{0.05\textwidth}|>{\centering}p{0.05\textwidth}|>{\centering}p{0.05\textwidth}|>{\centering\arraybackslash}p{0.05\textwidth}|}
\hline
\multicolumn{2}{|c|}{Scheme} & \multicolumn{6}{c|}{\# iterations} \\
\hline
Order in time & $\delta_x^q$ & 1 & 2 & 3 & 4 & 5 & 6 \\
\hline
1 & $\delta_x^1$ & 1 & - & - & - & - & - \\
1 & $\delta_x^2$ & 0 & - & - & - & - & - \\
1 & $\delta_x^4$ & 0 & - & - & - & - & - \\
\hline
2 (DeC) & $\delta_x^1$ & 1 & 1 & 1 & 0.78 & 0.71 & 0.85 \\
2 (DeC) & $\delta_x^2$ & 0 & 0.87 & 0.87 & 0.96 & 0.88 & 0.98 \\
2 (DeC) & $\delta_x^4$ & 0 & 0.12 & 0.12 & 0.54 & 0.53 & 0.61 \\
\hline
4 (DeC) & $\delta_x^1$ & 1 & 1 & 1.26 & 1.39 & 1.46 & 1.34\\
4 (DeC) & $\delta_x^2$ & 0 & 0.87 & 1.63 & 1.75 & 1.81 & 1.77\\
4 (DeC) & $\delta_x^4$ & 0 & 0.12 & 0.90 & 1.04 & 1.09 & 1.00 \\
\hline
\end{tabular}
\caption{Critical values of $\lambda = a \Delta t/\Delta x$ ensuring linear stability of different combinations of time and space discretizations of the transport term in~\eqref{eq:kinetic_with_Omega}.}
\label{tab:max_CFL_LobatoIIIC}
\end{table}

\subsection{Boundary conditions}
\label{sec:boundaries}

We now focus on the introduction of boundary conditions for the proposed arbitrarily high-order methods. The numerical schemes introduced in Sec.~\ref{sec:first_order_time} and \ref{sec:DeC} are purely local, except for the discrete convection term $\Lambda_i \delta_{x_i}^q$. Therefore, the consideration of boundary conditions only involves this non-local term. The question at stake in this section is: how to modify the space discretization of the proposed method close to a boundary such that (1) the scheme is consistent with the value we want to impose at the boundary, (2) the robustness of the numerical method is preserved, (3) the conservation of some useful quantity (for example mass close to a wall) can be systematically ensured?

\begin{figure}[h]
     \centering
     \begin{tikzpicture}[scale=0.8]
        \tikzstyle{point}=[fill,circle,scale=0.5]
        \tikzstyle{center}=[circle,draw,thick,fill=blue!45]
        \tikzstyle{used}=[draw,rectangle,thick,fill=white]
        \draw (0,0) grid[step=1.] (8,0);
        \draw (1,0) node[point]{};  
        \draw (3,0) node[point]{};  
        \draw (5,0) node[point]{};  
        \draw (7,0) node[point]{};
        \draw [thick] (0,-0.7) -- (0,0.7);  
        \draw [thick] (-0.2,-0.7) -- (0,-0.6);  
        \draw [thick] (-0.2,-0.5) -- (0,-0.4);  
        \draw [thick] (-0.2,-0.3) -- (0,-0.2);  
        \draw [thick] (-0.2,-0.1) -- (0,0.);  
        \draw [thick] (-0.2,0.1) -- (0,0.2);  
        \draw [thick] (-0.2,0.3) -- (0,0.4);  
        \draw [thick] (-0.2,0.5) -- (0,0.6);  
        \draw [dashed, thick] (2,-0.7) -- (2,0.7);  
        \draw [dashed, thick] (4,-0.7) -- (4,0.7);  
        \draw [dashed, thick] (6,-0.7) -- (6,0.7);
		\draw (0,-1.2) node {$\mathbf{\Phi}_b$}; 
		\draw (2,-1.2) node {$\mathbf{\Phi}^q_{1+1/2}$};
		\draw (4,-1.2) node {$\mathbf{\Phi}^q_{2+1/2}$};
		\draw (6,-1.2) node {$\mathbf{\Phi}^q_{3+1/2}$};
		\draw (1,-0.5) node {$x_1$};
		\draw (3,-0.5) node {$x_2$};
		\draw (5,-0.5) node {$x_3$};
		\draw (7,-0.5) node {$x_4$};
		\draw (0, 1.1) node {$\bbu_b$};
		\draw (1, 1.1) node {$\bF_1$};
		\draw (3, 1.1) node {$\bF_2$};
		\draw (5, 1.1) node {$\bF_3$};
		\draw (7, 1.1) node {$\bF_4$};
		 \end{tikzpicture}
     \caption{Sketch of the particular case of a left boundary considered with the finite volume formulation.}
    \label{fig:sketch_bnd}
\end{figure}
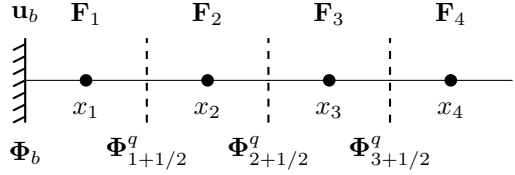

To fix the idea, we adopt the same notations as in Sec.~\ref{sec:space_disc} and focus on a particular case in one dimension displayed in Fig.~\ref{fig:sketch_bnd}. A left boundary is considered and we want to impose the conserved variables $\bbu_b$ at this boundary. As discussed in Sec.~\ref{sec:space_disc}, defining the space-derivative operator $\delta_x^q \bF$ is equivalent to defining a flux $\mathbf{\Phi}_{k+1/2}^q$ between cells centered about $x_k$ and $x_{k+1}$. Close to a boundary, new definitions of these fluxes have to be provided for two reasons:
\begin{enumerate}
	\item[(i)] At the boundary, the flux $\mathbf{\Phi}_b$ is undefined and has to be related to the value we want to impose at the boundary $\bbu_b$.
	\item[(ii)] Some of the fluxes in a small layer close to the boundary are not defined because, as seen in Sec.~\ref{sec:space_disc}, they involve neighbor points which may not exist. The thickness of this layer depends on the order of the spatial discretization.
\end{enumerate}

We first focus on point (i). By consistency with the kinetic system, the flux at the boundary can be written as $\mathbf{\Phi}_b = \Lambda_\bot \bF_b$, where $\Lambda_\bot$ is the matrix of the kinetic speeds in the direction normal to the boundary and $\bF_b$ is a set of distributions at the boundary, to be defined. We want to build $\bF_b$ such that $\P \bF_b = \bbu_b$ and recalling that $\P \Lambda_\bot \bF_b$ is the flux of $\bbu^\varepsilon$ normal to the boundary. It is paramount to properly impose these quantities. To this extent, we consider the particular case of a regularized kinetic model, in which $\bF_b$ can be related to the Jin-Xin variables $(\bbu^\varepsilon, \bbv_1^\varepsilon, \dots, \bbv_d^\varepsilon)$ through the pseudo-inverse of the reduced moments matrix $\overline{\momentsmatrix}^+$ as (\textit{cf.} Sec.~\ref{sec:regul_as_jinxin})
\begin{align}
	\bF_b = \overline{\momentsmatrix}^{+}
	\begin{pmatrix}
		\bbu_b \\ 
		\bbv_{1,b}^\varepsilon \\
		\vdots \\
		\bbv_{d,b}^\varepsilon
	\end{pmatrix}.
\end{align} 
With this choice, $\P \bF_b = \bbu_b$ is ensured by construction and $\P \Lambda_i \bF_b =: \bbv_{i, b}^\varepsilon$. The question left is the definition of the $\bbv_{i,b}^\varepsilon$, which are the fluxes of $\bbu^\varepsilon$ in the direction $x_i$ that we want to impose at the boundary. Here we set them to zero except for the one in the direction normal to the boundary $\bbv_{\bot, b}^\varepsilon$, that we define as
\begin{align}
	\bbv_{\bot, b}^\varepsilon = \bbf_\bot(\bbu_b) + \mathbf{d}_{\bot, b},
\end{align}
where $\bbf_\bot$ is the known convective flux in the direction normal to the boundary and $\mathbf{d}_{\bot, b}$ is an approximation of the diffusive flux, which remains to be defined. Remember that the diffusive flux is expected to be an approximation of $-\mathbf{D} \cdot \boldsymbol{\nabla}_{\boldsymbol{x}} \bbu^\varepsilon$, where $\mathbf{D}$ is the diffusion matrix. Therefore, one possibility to approximate $\mathbf{d}_{\bot, b}$ can be to rely on a finite-difference estimation of $\boldsymbol{\nabla}_{\boldsymbol{x}} \bbu^\varepsilon$. However, doing so, we would introduce a parabolic constraint on the time step $\Delta t= \mathcal{O}(\Delta x^2)$, which considerably reduces the efficiency and robustness of the overall scheme. To overcome this constraint, we compute $\mathbf{d}_{\bot, b}$ as a second-order interpolation of its value in the neighbor points close to the boundary as
\begin{align}
	\mathbf{d}_{\bot, b} = \frac{3}{2} \mathbf{d}_{\bot, 1} - \frac{1}{2} \mathbf{d}_{\bot, 2}, \qquad \mathbf{d}_{\bot, j} = \P \Lambda_\bot \bF_j - \bbf_\bot(\P \bF_j),
\end{align}
where $\bF_j$ stands for the distribution in the cell centered about $x_j$ (\textit{cf.} Fig.~\ref{fig:sketch_bnd}).

\begin{remark}
	Consider the mass density variable of the Navier-Stokes equations. Since it is not subject to diffusion, we have $\mathbf{d}_{\bot, j} = \mathbf{0}$ for this variable. As a consequence, $\P \Lambda_\bot \bF_b = \bbf_\bot (\bbu_b)$ which is equal to zero for a stationary wall. This shows that the proposed algorithm conserves mass by construction.
\end{remark}

Regarding point (ii), it can be addressed by locally switching the computation of the flux to a lower-order scheme, among those proposed in Sec.~\ref{sec:space_disc}. A possibility for doing so is proposed in Table~\ref{tab:bnds}, depending on the order in space and the sign of the kinetic velocity $a_j$. Note that the flux is null the case $a_j=0$. In the particular cases of second- and fourth-order schemes, a new second-order flux $\Phi_{j}^{2,*}$ is defined at point $x_{1+1/2}$ for a distribution $F_j$ advected at $a_j>0$ as
\begin{align}
\label{eq:flux_O2*}
	\Phi_{j,1+1/2}^{2,*} = a_j \left( F_{j,1} + \frac{1}{3}(F_{j,2}-F_{j,b}) \right).
\end{align}

\begin{table}[!ht]
  \centering
\begin{tabular}{|p{0.2\textwidth}||>{\centering}p{0.1\textwidth}|>{\centering}p{0.1\textwidth}|>{\centering}p{0.1\textwidth}||>{\centering}p{0.1\textwidth}|>{\centering}p{0.1\textwidth}|>{\centering\arraybackslash}p{0.1\textwidth}|}
\hline
& \multicolumn{3}{|c|}{$a_j>0$} & \multicolumn{3}{c|}{$a_j<0$} \\
\hline
Space discretization & $x_{1+1/2}$ & $x_{2+1/2}$ & $x_{3+1/2}$ & $x_{1+1/2}$ & $x_{2+1/2}$ & $x_{3+1/2}$ \\
\hline
First-order & $\Phi_{j,1+1/2}^1$ & $\Phi_{j,2+1/2}^1$ & $\Phi_{j,3+1/2}^1$ & $\Phi_{j,1+1/2}^1$ & $\Phi_{j,2+1/2}^1$ & $\Phi_{j,3+1/2}^1$ \\
Second-order & $\Phi_{j,1+1/2}^{2,*}$ & $\Phi_{j,2+1/2}^2$ & $\Phi_{j,3+1/2}^2$ & $\Phi_{j,1+1/2}^2$ & $\Phi_{j,2+1/2}^2$ & $\Phi_{j,3+1/2}^2$ \\
Fourth-order & $\Phi_{j,1+1/2}^{2,*}$ & $\Phi_{j,2+1/2}^2$ & $\Phi_{j,3+1/2}^4$ & $\Phi_{j,1+1/2}^4$ & $\Phi_{j,2+1/2}^4$ & $\Phi_{j,3+1/2}^4$ \\
\hline
\end{tabular}
\caption{Adopted choice of the computation of fluxes close to the boundary displayed in Fig.~\ref{fig:sketch_bnd}, depending on the order of the space discretization and the sign of the kinetic velocity $a_j$. $\Phi_{j, 1+1/2}^{2,*}$ is defined in \eqref{eq:flux_O2*}.}
\label{tab:bnds}
\end{table}

Finally note that with this finite-volume consideration, the extension of the example of Fig.~\ref{fig:sketch_bnd} to all the cases encountered in multi-dimensions on a Cartesian mesh is straightforward. There is no difficulty related to edges or corners because the only quantity that we impose is the flux across a cell.

\subsection{Equivalent Jin-Xin schemes}
\label{sec:eq_JinXin}

In Sec.~\ref{sec:regul_as_jinxin}, it has been shown that when a regularized collision matrix is considered, the kinetic system \eqref{eq:kinetic_with_Omega} is equivalent to a Jin-Xin system on the variables as $(\bbu^\varepsilon, \bbv_1^\varepsilon, \dots, \bbv_d^\varepsilon)$, thus reducing the rank of the system from $kp$ to $(d+1)p$. Le us now note the Jin-Xin variables $\mathbf{U}^\varepsilon = [\bbu^\varepsilon, \bbv_1^\varepsilon, \dots, \bbv_d^\varepsilon]^T$. The key relation allowing us to compute $\mathbf{U}^\varepsilon$ from $\bF$ and vice-versa when $\mathbb{H} \bF =  \mathbb{H}\M (\P \bF)=\mathbf{0}$ is
\begin{align}
	\mathbf{U}^\varepsilon = \overline{\momentsmatrix} \bF, \qquad
	\bF = \overline{\momentsmatrix}^+
	\mathbf{U}^\varepsilon,
\end{align}  
where $\overline{\momentsmatrix}$ is the reduced moments matrix and $\overline{\momentsmatrix}^+$ its pseudo inverse. In this section, we show how this can be used to reduce the memory cost of the numerical methods proposed above.

\subsubsection{First-order IMEX scheme}

We start with the first-order IMEX scheme of Sec.~\ref{sec:first_order_time}. Using a similar demonstration as in Sec.~\ref{sec:regul_as_jinxin}, we can show that Eqs.~\eqref{eq:first-order_IMEX_PF}-\eqref{eq:first_order_IMEX_F} are equivalent to the following system on $\mathbf{U}^\varepsilon$:
\begin{align}
	& \bbu^\varepsilon(t_{n+1}) = \bbu^\varepsilon(t_n) - \Delta t \sum_{i=1}^d \P \Lambda_i \delta_{x_i}^q \left( \overline{\momentsmatrix}^+ \mathbf{U}^\varepsilon(t_n) \right), \\
	& \begin{pmatrix}
		\bbv_1^\varepsilon \\ \vdots \\ \bbv_d^\varepsilon
	\end{pmatrix} (t_{n+1}) = 
	\left[ \mathbf{I}_{dp} + \hat{\mathbf{C}}_{n+1}^{-1} \right]^{-1} \left\{ \hat{\mathbf{C}}_{n+1}^{-1} \left[
	\begin{pmatrix}
		\bbv_1^\varepsilon \\ \vdots \\ \bbv_d^\varepsilon
	\end{pmatrix}(t_n) - \Delta t \sum_{i=1}^d
	\begin{pmatrix}
		\P \Lambda_1 \\ \vdots \\ \P \Lambda_d
	\end{pmatrix} \Lambda_i \delta_{x_i}^q \left( \overline{\momentsmatrix}^+ \mathbf{U}^\varepsilon(t_n) \right) \right] + \begin{pmatrix}
		\bbf_1^{n+1} \\ \vdots \\ \bbf_d^{n+1}
	\end{pmatrix} \right\},
\end{align}
where
\begin{align}
	\hat{\mathbf{C}}_{n+1} = \Delta t \frac{||\Lambda||}{\ell} \frac{\tilde{\mathbf{C}}}{\varepsilon}(\bbu^\varepsilon(t_{n+1})), \qquad \bbf_i^{n+1} = \bbf_i(\bbu^\varepsilon(t_{n+1})).
\end{align}
This is an explicit scheme for computing the variable $\mathbf{U}^\varepsilon(t_{n+1})$ from $\mathbf{U}^\varepsilon(t_{n})$. Note that it does not involve the Maxwellian $\M$, but the fluxes $\bbf_i$.

\subsubsection{High-order DeC schemes}
 Considering now the arbitrary high-order schemes of Sec.~\ref{sec:DeC}, we keep the same notation as before and write with a $\hat{\cdot}$ superscript any vector containing the information of all the subtimenodes, \textit{i.e.}
\begin{align}
	\hat{\bbu}^\varepsilon = (\mathbf{I}_s \otimes \P) \hat{\bF}, \qquad \hat{\bbv}_i^\varepsilon = ( \mathbf{I}_s \otimes \P \Lambda_i) \hat{\bF}, \qquad
	\hat{\mathbf{U}}^\varepsilon = (\mathbf{I}_s \otimes \overline{\momentsmatrix}) \hat{\bF}.
\end{align}
A similar work leads to the following algorithm on $\hat{\mathbf{U}}^\varepsilon$:
\begin{enumerate}
	\item Define $\hat{\mathbf{U}}^{\varepsilon, (0)} = [\mathbf{U}^\varepsilon(t_n), \dots, \mathbf{U}^\varepsilon(t_n) ]^T$ (repeated $s$ times),
	\item Repeat the following iterations for $0 \leq p \leq q-1$:
\begin{align}
	& \hat{\bbu}^{\varepsilon, (p+1)} = \hat{\bbu}^{\varepsilon, (0)} -(\mathbf{I}_s \otimes \P ) \hat{\mathbf{A}} \sum_{i=1}^d \hat{\Lambda}_i \delta_{x_i}^q \left( (\mathbf{I}_s \otimes \overline{\momentsmatrix}^+) \hat{\mathbf{U}}^{\varepsilon, (p)} \right), \\
	& \begin{pmatrix}
		\hat{\bbv}_1^\varepsilon \\ \vdots \\ \hat{\bbv}_d^\varepsilon
	\end{pmatrix}^{(p+1)} = \hat{\mathbf{C}}^{-1}_{(p+1)} \left[ \hat{\mathbf{C}}^{-1}_{(p+1)} + \tilde{\mathbf{A}} \right]^{-1} \left[ 
	\begin{pmatrix}
		\hat{\bbv}_1^\varepsilon \\ \vdots \\ \hat{\bbv}_d^\varepsilon
	\end{pmatrix}^{(0)} - \tilde{\mathbf{A}} 
	\sum_{i=1}^d \mathbf{I}_s \otimes 
	\begin{pmatrix}
		\P \Lambda_1 \\ \vdots \\ \P \Lambda_d
	\end{pmatrix}
	\hat{\Lambda}_i \delta_{x_i}^q \left( (\mathbf{I}_s \otimes \overline{\momentsmatrix}^+) \hat{\mathbf{U}}^{\varepsilon, (p)} \right) \right] \nonumber \\
	& \qquad \qquad \qquad + \tilde{\mathbf{A}} \left[ \hat{\mathbf{C}}_{(p+1)}^{-1} + \tilde{\mathbf{A}} \right]^{-1}
	\begin{pmatrix}
		\bbf_1(\hat{\bbu}^{\varepsilon, (p+1)}) \\ \vdots \\ \bbf_d(\hat{\bbu}^{\varepsilon, (p+1)})
	\end{pmatrix}, \label{eq:equivalent_jinxin_flux}
\end{align}
where
\begin{align}
	\hat{\mathbf{C}}_{(p)} = \frac{||\Lambda ||}{\ell} \begin{pmatrix}
		\tilde{\mathbf{C}}/\varepsilon(\bbu_1^{\varepsilon, (p)}) & \dots & 0 \\
		\vdots & \ddots & \vdots \\
		0 & \dots & \tilde{\mathbf{C}}/\varepsilon(\bbu_s^{\varepsilon, (p)})
	\end{pmatrix}, \qquad
	\tilde{\mathbf{A}} = \Delta t \mathbf{A} \otimes \mathbf{I}_{dp}.
\end{align}
\item When a RK scheme like Lobato IIIC is considered, the updated variables are obtained as $\mathbf{U}^\varepsilon(t_{n+1}) = \mathbf{U}_s^{\varepsilon, (q)}$.
\end{enumerate}

\begin{remark}
\label{rk:Jin-Xin_disc1}
	In the above numerical methods, the convective terms appear in the form $\overline{\momentsmatrix} \Lambda_i \delta_{x_i}^q (\overline{\momentsmatrix}^+ \mathbf{U}^\varepsilon)$, consistent with the convective terms of the equivalent Jin-Xin model \eqref{eq:regul_as_JinXin}. Note that the quantity $\delta_{x_i}^q (\overline{\momentsmatrix}^+ \mathbf{U}^\varepsilon)$ is different from $ \overline{\momentsmatrix}^+ \delta_{x_i}^q \mathbf{U}^\varepsilon$, because the space derivatives $\delta_{x_i}^q$ are only defined in the context of distributions since the upwinding depends on the sign of the kinetic wave (see Sec.~\ref{sec:space_disc}). It is therefore necessary to multiply the Jin-Xin variables by the pseudo-inverse matrix $\overline{\momentsmatrix}^+$ before computing the space derivatives. In this sense, the numerical methods of this section can be seen as algorithms to solve the Jin-Xin system \eqref{eq:regul_as_JinXin}, where the discussion about kinetic waves only enters into the approximation of space derivatives.
\end{remark}

\begin{remark}
	Following Remark~\ref{rk:Jin-Xin_disc1}, we could have adopted another numerical discretization of the space gradients appearing in \eqref{eq:regul_as_JinXin} by considering the eigenspace of the transport matrice $\mathbf{A}_i$ instead of the kinetic waves. This would have lead to a different class of numerical methods which do not share the entropy properties of the kinetic schemes developed in this work, because the transport matrices $\mathbf{A}_i$ are not simultaneously diagonalizable, meaning that the transport term in the eigenspace of $\mathbf{A}_i$ is not a mere advection. 
\end{remark}

\section{Numerical validations}
\label{sec:numerical_validations}

In this section, the proposed models are numerical assessed on some standard cases of the literature for transport-diffusion problem. The advection-diffusion of a scalar variable is first investigated, then the Navier-Stokes equations for fluid dynamics are considered. In any case, the four-velocity model in two dimensions of Example \ref{ex:moments_matrix_D2Q4} is adopted and a regularization is performed in the moments space related to the matrix $\momentsmatrix$. This leads to $\mathbb{H} \bF = \mathbb{H} \M(\P \bF) = \mathbf{0}$ with 
\begin{align}
	\mathbb{H} = [1, 1, -1, -1] \otimes \mathbf{I}_p.
\end{align}
For the sake of computational efficiency, the equivalent Jin-Xin schemes proposed in Sec~\ref{sec:eq_JinXin} have been implemented in a massively parallel Fortran code. The kinetic speed $a$ is always chosen so that the sub-characteristic condition is satisfied. Its value will be specified in each case, together with the Knudsen number $\varepsilon$ computed with \eqref{eq:def_kn}. In all cases, a uniform Cartesian mesh is considered with mesh size $\Delta x$. Four numerical methods are considered:
\begin{enumerate}
	\item A first-order scheme based on the IMEX method of Sec.~\ref{sec:first_order_time} with the space discretization $\delta_{x_i}^1$. In this case, we set the kinetic CFL number to $\lambda = a\Delta t/\Delta x = 1$.
	\item A second-order scheme based on the DeC method of Sec.~\ref{sec:DeC} involving the second-order Lobato IIIC scheme and the space discretizations $\delta_{x_i}^2$. To reach a second-order of accuracy, the DeC iterations are performed twice. In this case, we set the kinetic CFL number to $\lambda = a\Delta t/\Delta x = 0.8$.
	\item A fourth-order scheme based on the DeC method of Sec.~\ref{sec:DeC} involving the fourth-order Lobato IIIC scheme and the space discretizations $\delta_{x_i}^4$. To reach a fourth-order of accuracy, the DeC iterations are performed four times. In this case, we set the kinetic CFL number to $\lambda = a\Delta t/\Delta x = 1$.
\end{enumerate}

\subsection{Scalar advection-diffusion equation}

We first assess the proposed method for the resolution of scalar advection-diffusion equations in the form
\begin{align}
	\dpar{u}{t} + \dpar{f_1(u)}{x_1} + \dpar{f_2(u)}{x_2} = \alpha \left( \frac{\partial^2 u}{\partial x_1^2} + \frac{\partial^2 u}{\partial x_2^2} \right),
\end{align}
where $f_1(u) = c_1 u$, $f_2(u) = c_2 u$, $c_1$ and $c_2$ are constant advection velocities and $\alpha \geq 0$ is a diffusion parameter. In this case, the sub-characteristic condition reads
\begin{align}
	a > 2\max(c_1, c_2).
\end{align}

A $(L \times L)$ square periodic domain with $L=1$ is initialized with the following Gaussian state
\begin{align}
    u(x_1,x_2,0) = 1 + 0.01 \exp \left( - \frac{(x_1-0.5)^2 + (x_2-0.5)^2}{\delta^2} \right),
\end{align}
where $\delta = 0.1$ is the characteristic length of this problem. Following \eqref{eq:def_kn}, a Knudsen number is defined as
\begin{align}
	\varepsilon = \frac{\alpha}{a \delta},
\end{align}
and the advection velocities are set as $c_1 = c_2 = c =10$, in order to investigate the transport of the Gaussian in the diagonal direction. A square mesh of $(N \times N)$ equally spaced points is considered, with mesh size $\Delta x = 1/N$. The solution can be compared with the exact solution given by
\begin{align}
	   u_{exact}(x,t) = 1+0.01 \frac{1}{1+4\alpha t/\delta^2} \exp \left( -\frac{(x_1-0.5-c_1t)^2 + (x_2-0.5-c_2t)^2}{\delta^2 + 4\alpha t} \right),
\end{align}
allowing us to compute a $L^2$ error at time $t$ as
\begin{align}
    L^2 = \sqrt{\frac{\sum_i (u(\boldsymbol{x}_i, t) - u_{exact}(\boldsymbol{x}_i, t))^2}{\sum_i u_{exact}(\boldsymbol{x}_i, t)^2}},
\end{align}
where the sums are performed over all the points of the two-dimensional Cartesian mesh. Figure \ref{fig:Order_ADE} displays the $L^2$ errors obtained at time $t=0.005$ for meshes ranging from $N=10$ to $N=1280$ in logarithmic scales, for the three schemes under consideration. Three cases are considered:
\begin{enumerate}
	\item[(a)] $\alpha = 0.01$ and $a=2.1c = 21$, leading to a Knudsen number $\varepsilon \approx 4.8\times 10^{-3}$,
	\item[(b)] $\alpha = 0.01$ and $a=100c = 1000$, leading to a Knudsen number $\varepsilon = 10^{-4}$,
	\item[(c)] $\alpha = 0$ and $a=2.1c = 21$, leading to a Knudsen number $\varepsilon = 0$.
\end{enumerate} 

\begin{figure}[h!]
    \centering
    \begin{subfigure}{0.32\textwidth}
    \centering
    \includegraphics{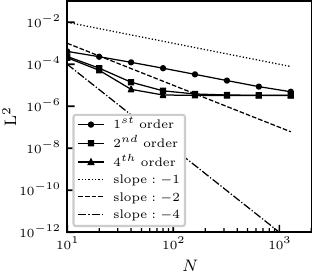}
    \caption{$\alpha=0.01$, $a=2.1c$ ($\varepsilon \approx 4.8\ 10^{-3}$)}
    \end{subfigure}
    \begin{subfigure}{0.32\textwidth}
    \centering
    \includegraphics{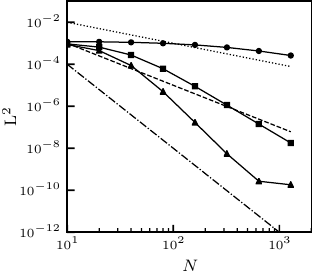}
    \caption{$\alpha=0.01$, $a=100c$ ($\varepsilon = 10^{-4}$)}
    \end{subfigure}
    \begin{subfigure}{0.32\textwidth}
    \centering
    \includegraphics{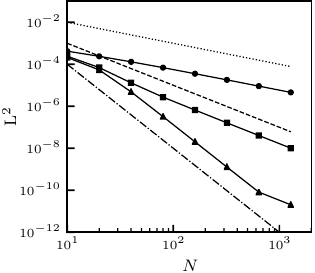}
    \caption{$\alpha=0$, $a=2.1c$ ($\varepsilon = 0$)}
    \end{subfigure}
    \caption{Mesh convergence study of the advection-diffusion of an initial Gaussian shape at time $t=0.005$.}
    \label{fig:Order_ADE}
\end{figure}

\begin{table}[!ht]
\begin{subtable}{1.\textwidth}
\centering
\begin{tabular}{|p{0.05\textwidth}||>{\centering}p{0.16\textwidth}>{\centering}p{0.04\textwidth}|>{\centering}p{0.16\textwidth}>{\centering}p{0.04\textwidth}|>{\centering}p{0.16\textwidth}>{\centering\arraybackslash}p{0.04\textwidth}|}
\hline
 & \multicolumn{2}{|c|}{First-order} & \multicolumn{2}{c|}{Second-order} & \multicolumn{2}{c|}{Fourth-order} \\
\hline
$h$ & $L^2$ & $r$ & $L^2$ & $r$ & $L^2$ & $r$ \\
\hline
$10$ & $4.10441781\ 10^{-4}$ & - & $2.35184625\ 10^{-4}$ & - & $2.05268145\ 10^{-4}$ & - \\
$20$ & $2.29191590\ 10^{-4}$ & $0.84$ & $6.54767191\ 10^{-5}$ & $1.84$ & $5.18934685\ 10^{-5}$ & $1.98$ \\
$40$ & $1.23974890\ 10^{-4}$ & $0.89$ & $1.40309659\ 10^{-5}$ & $2.22$ & $6.40803371\ 10^{-6}$ & $3.02$ \\
$80$ & $6.50497422\ 10^{-5}$ & $0.93$ & $5.45364755\ 10^{-6}$ & $1.36$ & $3.44739780\ 10^{-6}$ & $0.89$ \\
$160$ & $3.31818933\ 10^{-5}$ & $0.97$ & $3.85747480\ 10^{-6}$ & $0.50$ & $3.30981625\ 10^{-6}$ & $0.06$ \\
$320$ & $1.67915333\ 10^{-5}$ & $0.98$ & $3.44616432\ 10^{-6}$ & $0.16$ & $3.30030830\ 10^{-6}$ & $0.00$ \\
$640$ & $8.62269212\ 10^{-6}$ & $0.96$ & $3.33713282\ 10^{-6}$ & $0.05$ & $3.29967606\ 10^{-6}$ & $0.00$ \\
$1280$ & $4.91867644\ 10^{-6}$ & $0.81$ & $3.30878501\ 10^{-6}$ & $0.01$ & $3.29963553\ 10^{-6}$ & $0.00$ \\
\hline 
\end{tabular}
\caption{$\alpha=0.01$, $a=2.1c$ ($\varepsilon \approx 4.8\ 10^{-3}$)}
\end{subtable} \\ \vspace{1mm} \\
\begin{subtable}{1.\textwidth}
\centering
\begin{tabular}{|p{0.05\textwidth}||>{\centering}p{0.16\textwidth}>{\centering}p{0.04\textwidth}|>{\centering}p{0.16\textwidth}>{\centering}p{0.04\textwidth}|>{\centering}p{0.16\textwidth}>{\centering\arraybackslash}p{0.04\textwidth}|}
\hline
 & \multicolumn{2}{|c|}{First-order} & \multicolumn{2}{c|}{Second-order} & \multicolumn{2}{c|}{Fourth-order} \\
\hline
$h$ & $L^2$ & $r$ & $L^2$ & $r$ & $L^2$ & $r$ \\
\hline
$10$ & $1.18168727\ 10^{-3}$ & - & $9.36363462\ 10^{-4}$ & - & $8.73349525\ 10^{-4}$ & - \\ 
$20$ & $1.17685430\ 10^{-3}$ & $0.01$ & $6.56543672\ 10^{-4}$ & $0.51$ & $4.48079316\ 10^{-4}$ & $0.96$ \\ 
$40$ & $1.11082364\ 10^{-3}$ & $0.08$ & $2.76720000\ 10^{-4}$ & $1.25$ & $8.73161790\ 10^{-5}$ & $2.36$ \\ 
$80$ & $9.99372824\ 10^{-4}$ & $0.15$ & $6.13885711\ 10^{-5}$ & $2.17$ & $4.97815056\ 10^{-6}$ & $4.13$ \\ 
$160$ & $8.37946655\ 10^{-4}$ & $0.25$ & $8.89695848\ 10^{-6}$ & $2.79$ & $1.69257770\ 10^{-7}$ & $4.88$ \\ 
$320$ & $6.35500452\ 10^{-4}$ & $0.40$ & $1.13974353\ 10^{-6}$ & $2.96$ & $5.47995040\ 10^{-9}$ & $4.95$ \\ 
$640$ & $4.29897683\ 10^{-4}$ & $0.56$ & $1.43016674\ 10^{-7}$ & $2.99$ & $2.70040658 \ 10^{-10}$ & $4.34$ \\ 
$1280$ & $2.61670647\ 10^{-4}$ & $0.72$ & $1.79102733\ 10^{-8}$ & $3.00$ & $1.82973511\ 10^{-10}$ & $0.56$ \\ \hline
\end{tabular}
\caption{$\alpha=0.01$, $a=100c$ ($\varepsilon =10^{-4}$)}
\end{subtable} \\ \vspace{1mm} \\
\begin{subtable}{1.\textwidth}
\centering
\begin{tabular}{|p{0.05\textwidth}||>{\centering}p{0.16\textwidth}>{\centering}p{0.04\textwidth}|>{\centering}p{0.16\textwidth}>{\centering}p{0.04\textwidth}|>{\centering}p{0.16\textwidth}>{\centering\arraybackslash}p{0.04\textwidth}|}
\hline
 & \multicolumn{2}{|c|}{First-order} & \multicolumn{2}{c|}{Second-order} & \multicolumn{2}{c|}{Fourth-order} \\
\hline
$h$ & $L^2$ & $r$ & $L^2$ & $r$ & $L^2$ & $r$ \\
\hline
$10$ & $4.21656606\ 10^{-4}$ & - & $2.42351339\ 10^{-4}$ & - & $2.10426945\ 10^{-4}$ & - \\ 
$20$ & $2.39939665\ 10^{-4}$ & $0.81$ & $6.97979830\ 10^{-5}$ & $1.80$ & $5.40276211\ 10^{-5}$ & $1.96$ \\ 
$40$ & $1.31004777\ 10^{-4}$ & $0.87$ & $1.30520721\ 10^{-5}$ & $2.42$ & $4.86173362\ 10^{-6}$ & $3.47$ \\ 
$80$ & $6.92500030\ 10^{-5}$ & $0.92$ & $2.72682525\ 10^{-6}$ & $2.26$ & $3.21964570\ 10^{-7}$ & $3.92$ \\ 
$160$ & $3.55788052\ 10^{-5}$ & $0.96$ & $6.61919579\ 10^{-7}$ & $2.04$ & $2.03174351\ 10^{-8}$ & $3.99$ \\ 
$320$ & $1.81069823\ 10^{-5}$ & $0.97$ & $1.62950885\ 10^{-7}$ & $2.02$ & $1.27208179\ 10^{-9}$ & $4.00$ \\ 
$640$ & $9.12595098\ 10^{-6}$ & $0.99$ & $4.05774304\ 10^{-8}$ & $2.01$ & $8.17174630\ 10^{-11}$ & $3.96$ \\ 
$1280$ & $4.58470481\ 10^{-6}$ & $0.99$ & $1.01342336\ 10^{-8}$ & $2.00$ & $2.03920378\ 10^{-11}$ & $2.00$ \\ \hline
\end{tabular}
\caption{$\alpha=0$, $a=2.1c$ ($\varepsilon =0$)}
\end{subtable}
\caption{Orders of convergence for the advection-diffusion problem and two-wave model for orders 1, 2
and 4. The final time is $T=0.005$. The wave velocity $a$ is varied to exhibit its effect on the $\mathcal{O}(\varepsilon^2)$ consistency error, which appears as a plateau in the $L^2$ error of the high-order schemes.}
\label{tab:Order_ADE}
\end{table}

Note that in case (c), the numerical scheme of Sec.~\ref{sec:eq_JinXin} can be considerably simplified. Indeed, $\alpha =0$ leads to $\hat{\mathbf{C}}_{(p)}^{-1} = \mathbf{0}_{dkp \times dkp}$ so that \eqref{eq:equivalent_jinxin_flux} simply becomes
\begin{align}
	\begin{pmatrix}
		\bbv_1^\varepsilon \\ \bbv_2^\varepsilon
	\end{pmatrix}^{(p+1)}
	 =
	 \begin{pmatrix}
	 	\bbf_1(\hat{\bbu}^{\varepsilon, (p+1)}) \\ \bbf_2(\hat{\bbu}^{\varepsilon, (p+1)})
	 \end{pmatrix}.
\end{align}
The first-, second- and fourth-order schemes are then similar to what was proposed in \cite{AbgrallK}, in which asymptotic preservation was demonstrated. 


The following conclusions can be drawn from Fig.~\ref{fig:Order_ADE}:
\begin{itemize}
	\item In cases (a) and (b), a plateau is observed for the finest meshes, in agreement with the $\mathcal{O}(\varepsilon^2)$ consistency error of the present kinetic model with the target advection-diffusion equation. We observe that the value of this plateau decreases as the Knudsen decreases.
	\item In case (c), no plateau is observed because the numerical scheme with $\alpha=0$ is consistent with the target hyperbolic system, as shown in~\cite{Torlo}.
	\item As shown in \cite{wissocq2023Kinetic}, the numerical accuracy of the first-order scheme decreases as the kinetic speed increases.
	\item As shown in \cite{wissocq2023Kinetic}, case (b) exhibits superconvergence for the second-order scheme with an apparent third-order of convergence. Same observation is drawn for the fourth-order scheme, for which an apparent fifth-order of convergence is measured.
	\item With case (c), the measured orders of convergence are in agreement with the expected orders of accuracy of the schemes.
\end{itemize}

To better support these observations, the measured $L^2$ errors and slopes $r$ are provided in Table~\ref{tab:Order_ADE}. \newline

An important particularity of the proposed kinetic schemes is the existence of a non-zero but controllable consistency error in $\mathcal{O}(\varepsilon^2)$. As discussed in \cite{wissocq2023Kinetic}, this error is due to the fact that we do not want to target the transport-diffusion system in the strict limit $\varepsilon \rightarrow 0$ of a kinetic system, but as the first-order term of a Chapman-Enskog expansion for low, but non-zero, values of $\varepsilon$. In order to numerically validate the behavior of this consistency error, an asymptotic study is performed for increasing values of the kinetic velocities, from $a=2.1c$ to $a=67.2c$. For this study, the fourth-order scheme is considered with the finer mesh $N=1280$, in order to get rid of the numerical errors. $L^2$ errors and computed slopes $r$ are compiled in Table~\ref{tab:Order_Kn_ADE}. The expected $\mathcal{O}(\varepsilon^2)$ consistency error is clearly exhibited.

\begin{table}[!ht]
\centering
\begin{tabular}{|p{0.03\textwidth}|>{\centering}p{0.13\textwidth}>{\centering}p{0.13\textwidth}>{\centering}p{0.13\textwidth}>{\centering}p{0.13\textwidth}>{\centering}p{0.13\textwidth}>{\centering\arraybackslash}p{0.13\textwidth}|}
\hline
$a$ & $2.1c$ & $4.2c$ & $8.4c$ & $16.8c$ & $33.6c$ & $67.2c$ \\
\hline
$\varepsilon$ & $4.8\ 10^{-3}$ & $2.4\ 10^{-3}$ & $1.2\ 10^{-3}$ & $6.0\ 10^{-4}$ & $3.0\ 10^{-4}$ & $1.5\ 10^{-4}$ \\
\hline 
$L^2$ & $3.299636\ 10^{-6}$ & $1.263713\ 10^{-7}$ & $2.672607\ 10^{-8}$ & $6.445998\ 10^{-9}$ & $1.599668\ 10^{-9}$ & $4.012814\ 10^{-10}$ \\
$r$ & - & $4.71$ & $2.24$ & $2.05$ & $2.01$ & $2.00$ \\
\hline
\end{tabular}
\caption{Asymptotic study of the consistency error in Knudsen number $\varepsilon$ of the advection-diffusion of a Gaussian. In order to get rid of numerical errors, a fine mesh of $(1280 \times 1280)$ points is considered and simulations are performed with the fourth-order scheme. Knudsen number is defined as $\varepsilon = \alpha/(a \delta)$.}
\label{tab:Order_Kn_ADE}
\end{table}


\subsection{Navier-Stokes equations: Thermal Couette flow}
\label{sec:validation_Couette}

We now assess the ability of the proposed kinetic methods to approximate the two-dimensional Navier-Stokes equations for fluid dynamics with a hyperbolic condition on the time step. For this case, the fluxes of conserved variables $\bbf_1$ and $\bbf_2$ and the diffusion matrix $\mathbf{D}$ are provided in Example~\ref{ex:NS_2D}. The sub-characteristic condition reads
\begin{align}
	a > 2\, \mathrm{max}(\rho(\bbf_1'(\bbu)), \rho(\bbf_2'(\bbu))) = 2\, \mathrm{max}(u + c, v+c),
\end{align}
where $c=\sqrt{\gamma P/\rho}$ is the sound speed, $P$ is the thermodynamic pressure given by the ideal gas equation of state,
\begin{align}
	P = (\gamma-1)\left( E - \frac{\rho}{2}(u^2 + v^2) \right),
\end{align}
and $\gamma$ is the heat capacity ratio. In the present section, the kinetic schemes are evaluated on two thermal Couette flow test cases. In the first case, a one-dimensional domain of size $L=1$ is initialized with a flow at rest as follows:
\begin{align}
	\rho(\boldsymbol{x}, t=0) = 1, \qquad u(\boldsymbol{x}, t=0) = 0, \qquad v(\boldsymbol{x}, t=0) = 0, \qquad P(\boldsymbol{x}, t=0) = 1.
\end{align}
Two isothermal walls are considered as boundary conditions. On the left and right walls, the following variables are respectively imposed:
\begin{align}
	[u, v, T]_L = [0, 1.3\sqrt{\gamma}, 1] , \qquad [u, v, T]_R = [0, 0, 1], 
\end{align}
where $T=P/\rho$ is the fluid temperature. Moreover, the wall pressure is imposed by ensuring a zero pressure gradient in the direction normal to the wall, using a second-order approximation of the gradient. For the left wall, it reads
\begin{align}
	P_L = \frac{9}{8} P_1 - \frac{1}{8} P_2,
\end{align}
where $P_1$ is the pressure a the first cell close to the wall, and $P_2$ is the pressure of the second cell (see Fig.~\ref{fig:sketch_bnd}). Once the quantities $\bbu_b$ are known at both left and right boundaries, the procedure described in Sec.~\ref{sec:boundaries} can be applied to ensure consistent and mass conserving boundary conditions.

The other physical parameters of this test case are: $\gamma = 1.4$, $\mathrm{Pr} = 0.73$, $\mu = 0.01$ and $\lambda = -(2/3)\mu$. In order to enforce the sub-characteristic condition, the kinetic speed is updated at each iteration as $a = 2.1 \, \mathrm{max}(u+c, v+c)$, which dynamically adapts the time step. After time convergence is reached, the temperature profiles obtained with the first-, second- and fourth order schemes are displayed in Fig.~\ref{fig:Couette_isoT}. They are compared with the exact profile for isothermal walls,
\begin{align}
	T_{exact} = 1 + \frac{(\gamma-1) \mathrm{Pr}}{2 \gamma} v_L^2 \, \frac{x_1}{L} \left( 1-\frac{x_1}{L} \right).
\end{align}
In Fig~\ref{fig:Couette_isoT}-(a), the first-order scheme is evaluated with $N=100$, $N=200$ and $N=1000$ points. Even though a convergence towards the exact profile can be observed, the finest mesh is not sufficient to conclude on a correct approximation of the solution. On the other hand, the second- and fourth-order schemes are assessed with only $N=8$ points on Fig.~\ref{fig:Couette_isoT}-(b). This time, a very good agreement is observed with the exact solution, even with the second-order scheme with such a coarse mesh. The very good behavior of the second-order scheme can be explained by the fact that the analytical solution is a quadratic function of $x_1$, which can be correctly described by a second-order scheme. Also note that, in this test case, the way of prescribing the diffusive flux at the boundaries is of paramount importance since it drives the motion of the fluid. Looking at the fluid temperature at the first right and left points allows us to validate the diffusive fluxes imposed with the boundary conditions proposed in Sec.~\ref{sec:boundaries}. An important feature of these diffusive fluxes is that they are in agreement with a hyperbolic scaling of the time step.

\begin{figure}[h!]
    \centering
    \begin{subfigure}{0.48\textwidth}
    \centering
    \includegraphics{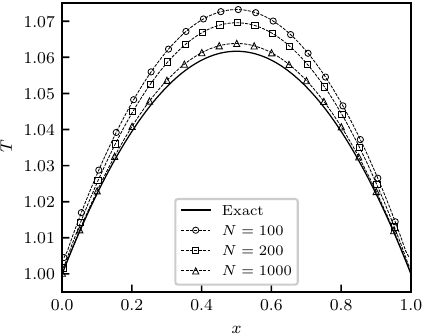}
    \caption{First-order scheme with $N=100,200,1000$ points.}
    \end{subfigure}
    \begin{subfigure}{0.48\textwidth}
    \centering
    \includegraphics{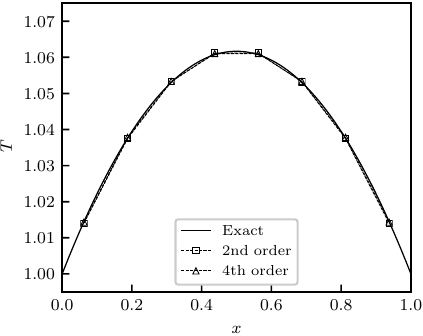}
    \caption{Second- and fourth-order schemes with $N=8$ points.}
    \end{subfigure}
    \caption{Temperature profiles of the Couette flow with isothermal-isothermal boundary conditions.}
    \label{fig:Couette_isoT}
\end{figure}

In the second Couette flow test case, the isothermal left wall is replaced by an adiabatic wall. This is done by replacing $T_L$ by
\begin{align}
	T_L = \frac{9}{8} T_1 - \frac{1}{8} T_2,
\end{align}
where $T_1$ is the fluid temperature at the first point close to the left wall and $T_2$ is the temperature at the second point. All the other parameters for this test case are kept unchanged. The exact solution for an adiabatic left wall reads
\begin{align}
	T_{exact} = 1 + \frac{(\gamma-1) \mathrm{Pr}}{2 \gamma} v_L^2 \, \left[ 1-\left( \frac{x_1}{L} \right)^2 \right].
\end{align}
Fig.~\ref{fig:Couette_adiab}-(a) displays the temperature profile obtained with the first-order scheme once time convergence is achieved. This is done for three meshes: $N=100$, $N=200$ and $N=1000$ points in the direction $x_1$. A similar conclusion as in Fig.~\ref{fig:Couette_isoT}-(a) can be drawn: a mesh convergence towards the exact solution can be observed, but $N=1000$ mesh points do not seem to be sufficient to accurately match the expected profile. Fig.~\ref{fig:Couette_adiab}-(b) shows that, with only $N=16$ points, the exact profile can be accurately recovered with the second- and fourth order schemes.

\begin{figure}[h!]
    \centering
    \begin{subfigure}{0.48\textwidth}
    \centering
    \includegraphics{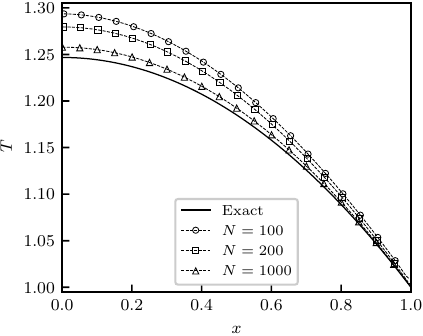}
    \caption{First-order scheme with $N=100,200,1000$ points.}
    \end{subfigure}
    \begin{subfigure}{0.48\textwidth}
    \centering
    \includegraphics{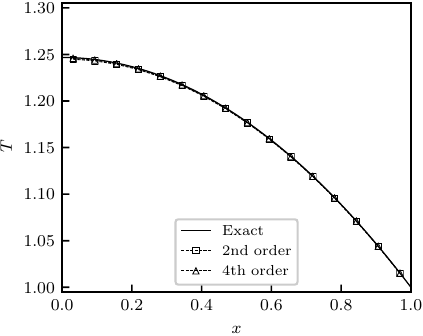}
    \caption{Second- and fourth-order schemes with $N=16$ points.}
    \end{subfigure}
    \caption{Temperature profiles of the Couette flow with adiabatic-isothermal boundary conditions.}
    \label{fig:Couette_adiab}
\end{figure}

A last important remark is that, even though this is not explicitly shown here, mass conservation has been numerically validated in the two thermal Couette flows of this section.

\subsection{Navier-Stokes equations: shock-boundary layer interaction}

We finally evaluate the ability of the proposed kinetic methods to accurately reproduce the complex patterns involved in the interactions between a shock wave and a boundary layer. This flow physics is known to be strongly affected by the Reynolds number. Therefore, it can be used to validate the introduction of diffusive terms (viscosity and thermal diffusion) in our model. The numerical setup of this case follows what was previously done in~\cite{Daru2009}. A two dimensional domain of size $(L_x \times L_y) = (1 \times 1/2)$ is considered with adiabatic static walls on left, bottom and right boundary conditions, and with a symmetry condition on the top boundary. In order to impose conserved variables on the adiabatic walls, the same strategy as in Sec.~\ref{sec:validation_Couette} is adopted and the distribution fluxes are imposed following Sec.~\ref{sec:boundaries}. Regarding the symmetry boundary conditions, it can be simply imposed by defining ghost nodes on the top of the computational domain. The values of the Jin-Xin variables $(\bbu^\varepsilon, \bbv_1^\varepsilon, \bbv_2^\varepsilon)$ are imposed at each iteration such that a symmetry is ensured at the top boundary condition. In the computational domain, the fluid is initialized as
\begin{align}
	[\rho, u, v, P] = 
	\begin{cases}
		[120, 0, 0, 120/\gamma] & \mathrm{if}\ x_1 \leq 0.5, \\
		[1.2, 0, 0, 1.2/\gamma] & \mathrm{else},
	\end{cases}
\end{align} 
in order to generate a left-moving rarefaction wave and right-moving contact and shock waves. Notably, the Mach number of the shock wave is equal to $2.37$. The physical parameters of the fluid are $\gamma = 1.2$, $\mathrm{Pr} = 0.73$ and the second viscosity (defined in Example~\ref{ex:NS_2D}) is set to $\lambda = -(2/3)\mu$. The dynamic viscosity is defined as $\mu=1/\mathrm{Re}$ where $\mathrm{Re}$ is the Reynolds number of the flow. Following the study of \cite{Daru2009}, four values are assessed for the Reynolds number: $\mathrm{Re}=200$, $\mathrm{Re} = 500$, $\mathrm{Re}=750$ and $\mathrm{Re=1000}$.

In this section, we only consider the fourth-order method based on the fourth-order Lobato IIIC scheme used in the DeC time integration of Sec.~\ref{sec:DeC}, with the fourth-order decentered gradients $\Phi^4$. The kinetic speed is set to a value as low as possible, \textit{i.e.}
\begin{align}
	a = 2.1 \, \mathrm{max}(u +c, v+c),
\end{align}
which leads to the following CFL number based on the acoustic speed: $\mathrm{CFL} = 0.48$. For the most viscous case ($\mathrm{Re}=200$), we consider a mesh with $(2000 \times 1000)$ points, while $(4000 \times 2000)$ points are considered in the other cases. The results are compared with the reference simulations of Daru \& Tenaud~\cite{Daru2009} performed with a one-step monotonicity preserving scheme of seventh-order of accuracy in the scalar case (OSMP7), with meshes ranging from $(1000 \times 500)$ to $(4000 \times 2000)$ points.

Fig.~\ref{fig:ShockBoundary_t06} displays the temperature contours obtained at time $t=0.6$ in the bottom-right corner of the simulation domain for the four Reynolds numbers considered, in comparison with the results obtained in \cite{Daru2009}. As expected, the reflection of the shock wave to the right boundary leads to a strong interaction with the bottom boundary layer, giving birth to the formation of a lambda-shape like shock pattern. A slip line generated from the triple point rolls up toward the right end corner, generating a ``bubble'' close to the bottom boundary. The shape of the interacting shock waves, the bubble and the cold jet moving to the left inside the boundary layer are in very good agreement with the reference simulation of \cite{Daru2009}. Notably, one can see that the larger the Reynolds number, the thinner the boundary layer.

\begin{figure}[h!]
   \begin{subfigure}{\textwidth}	
   \begin{subfigure}{0.49\textwidth}
    \includegraphics[width=0.95\textwidth, valign=t]{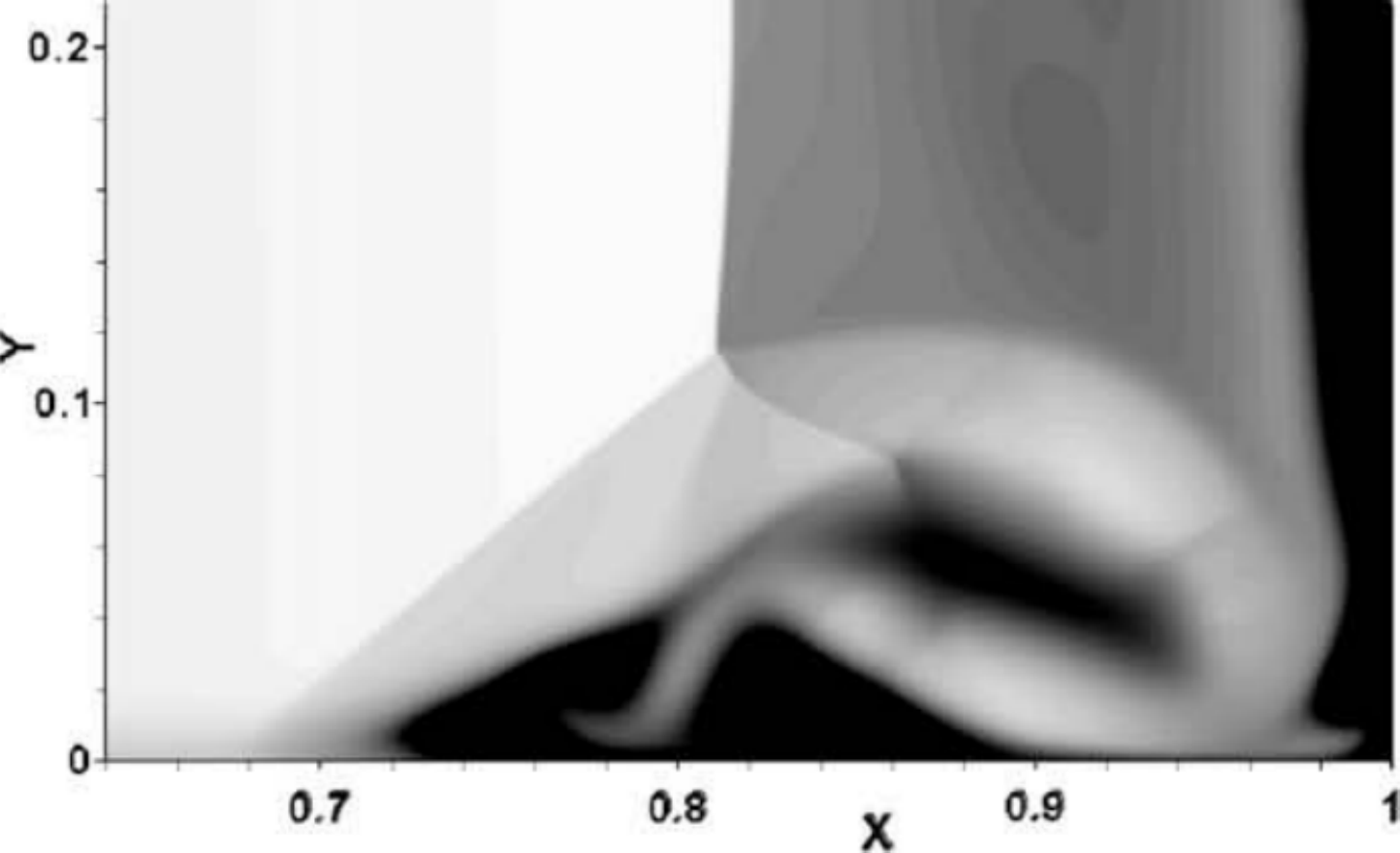}
    \end{subfigure}
    \begin{subfigure}{0.49\textwidth}
    \vspace{2.1mm}
    \includegraphics[width=0.98\textwidth,valign=t]{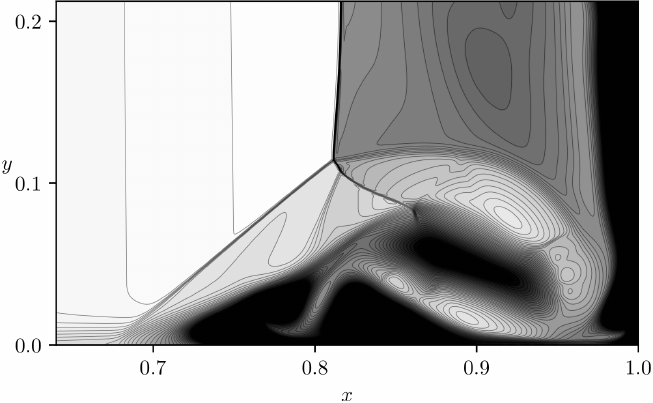}
    \end{subfigure}
   \end{subfigure}
   \begin{subfigure}{\textwidth}	
   \begin{subfigure}{0.49\textwidth}
    \includegraphics[width=0.95\textwidth, valign=t]{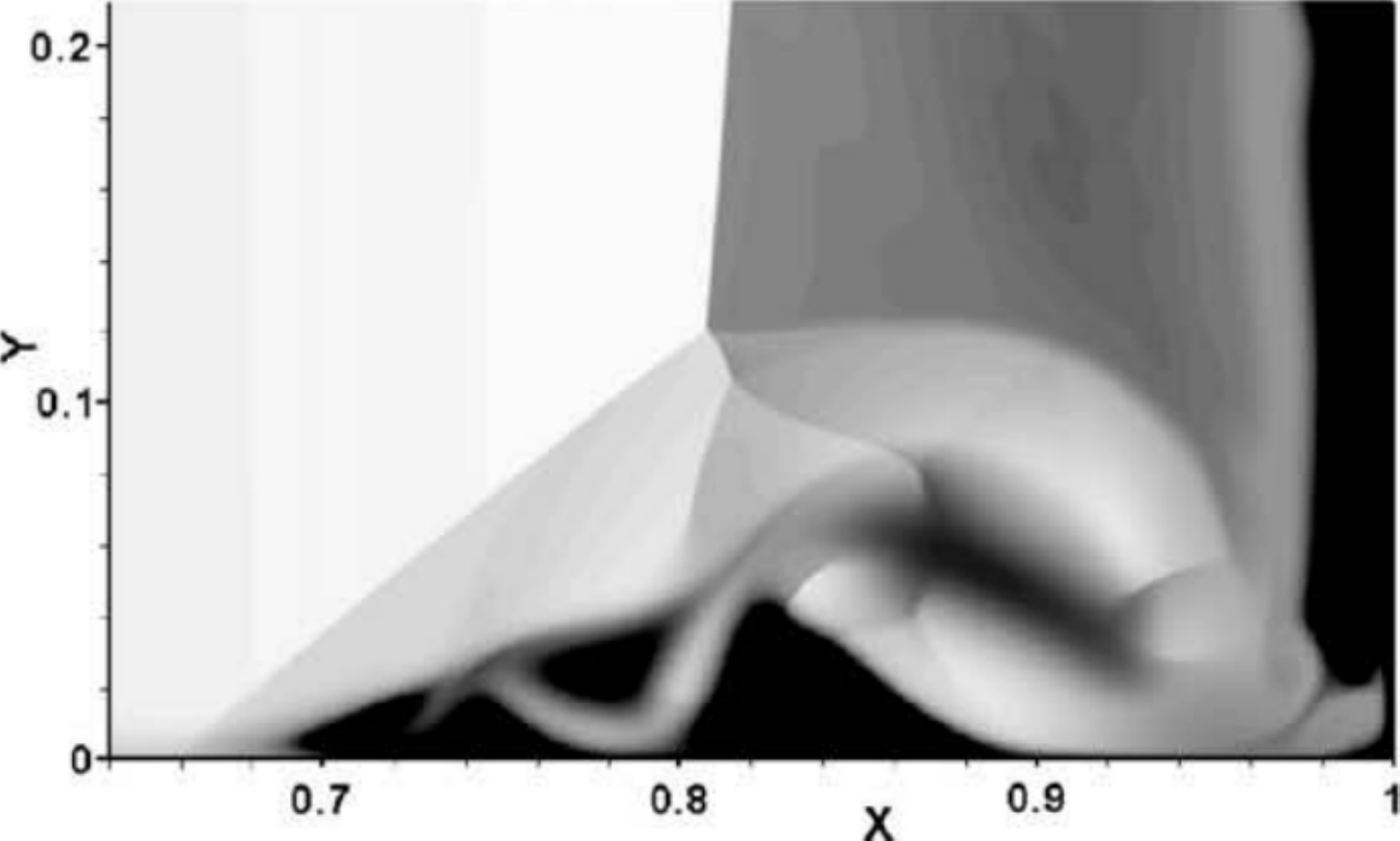}
    \end{subfigure}
    \begin{subfigure}{0.49\textwidth}
    \vspace{2.3mm}
    \includegraphics[width=0.98\textwidth,valign=t]{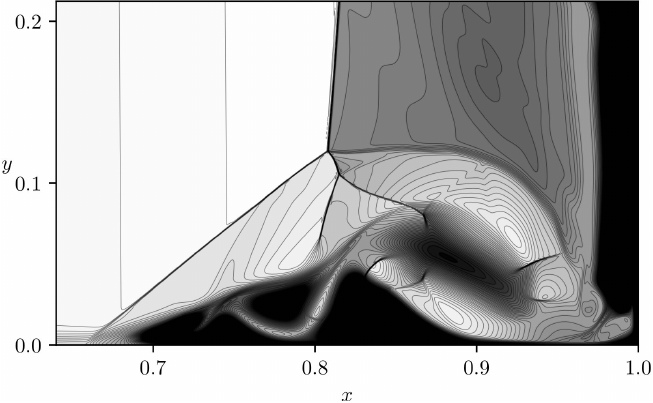}
    \end{subfigure}
   \end{subfigure}
   \begin{subfigure}{\textwidth}	
   \begin{subfigure}{0.49\textwidth}
    \includegraphics[width=0.95\textwidth, valign=t]{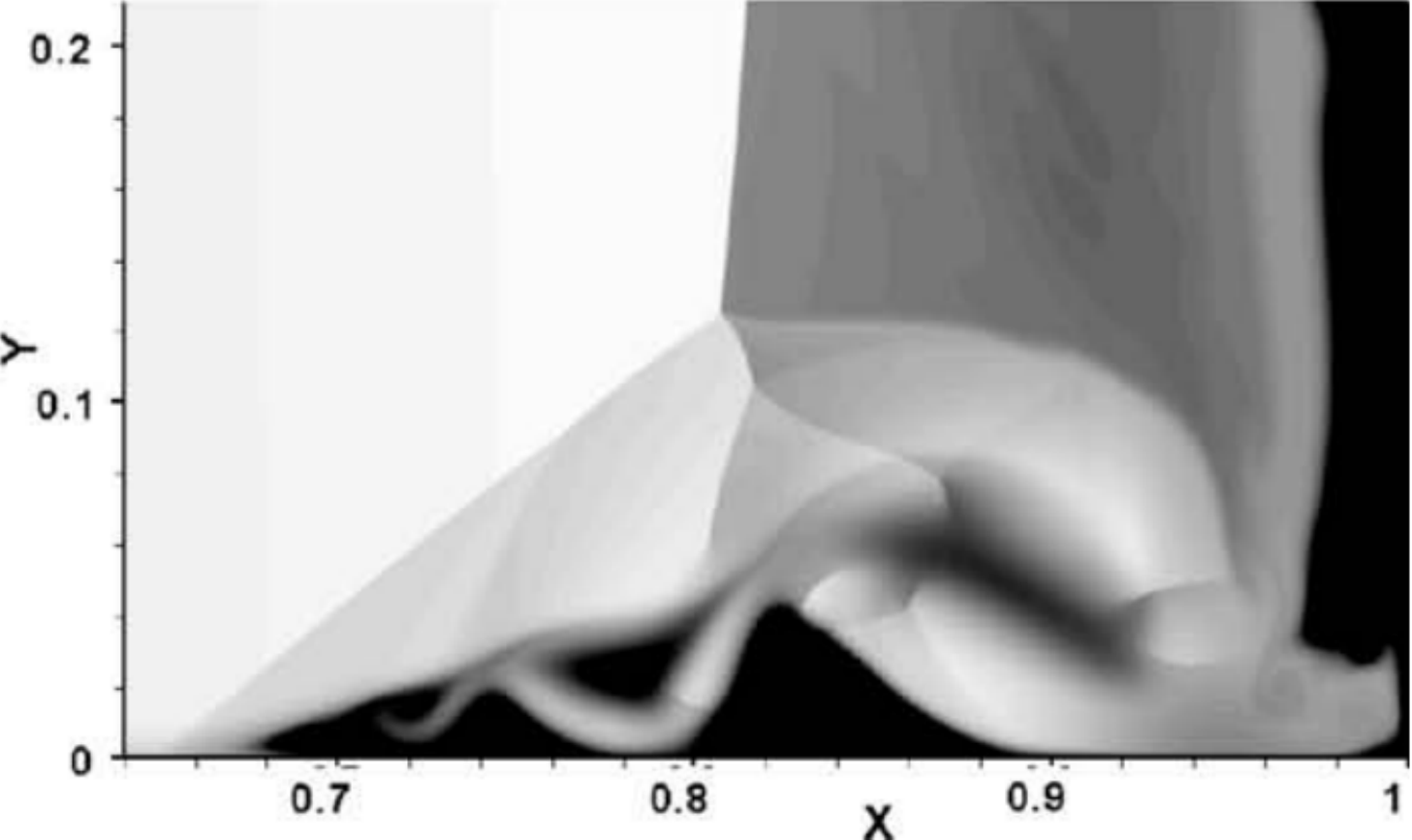}
    \end{subfigure}
    \begin{subfigure}{0.49\textwidth}
    \vspace{2.2mm}
    \includegraphics[width=0.98\textwidth,valign=t]{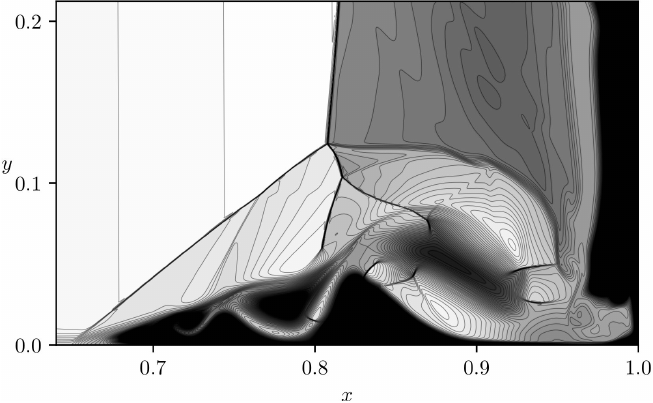}
    \end{subfigure}
   \end{subfigure}
   \begin{subfigure}{\textwidth}	
   \begin{subfigure}{0.49\textwidth}
    \includegraphics[width=0.95\textwidth, valign=t]{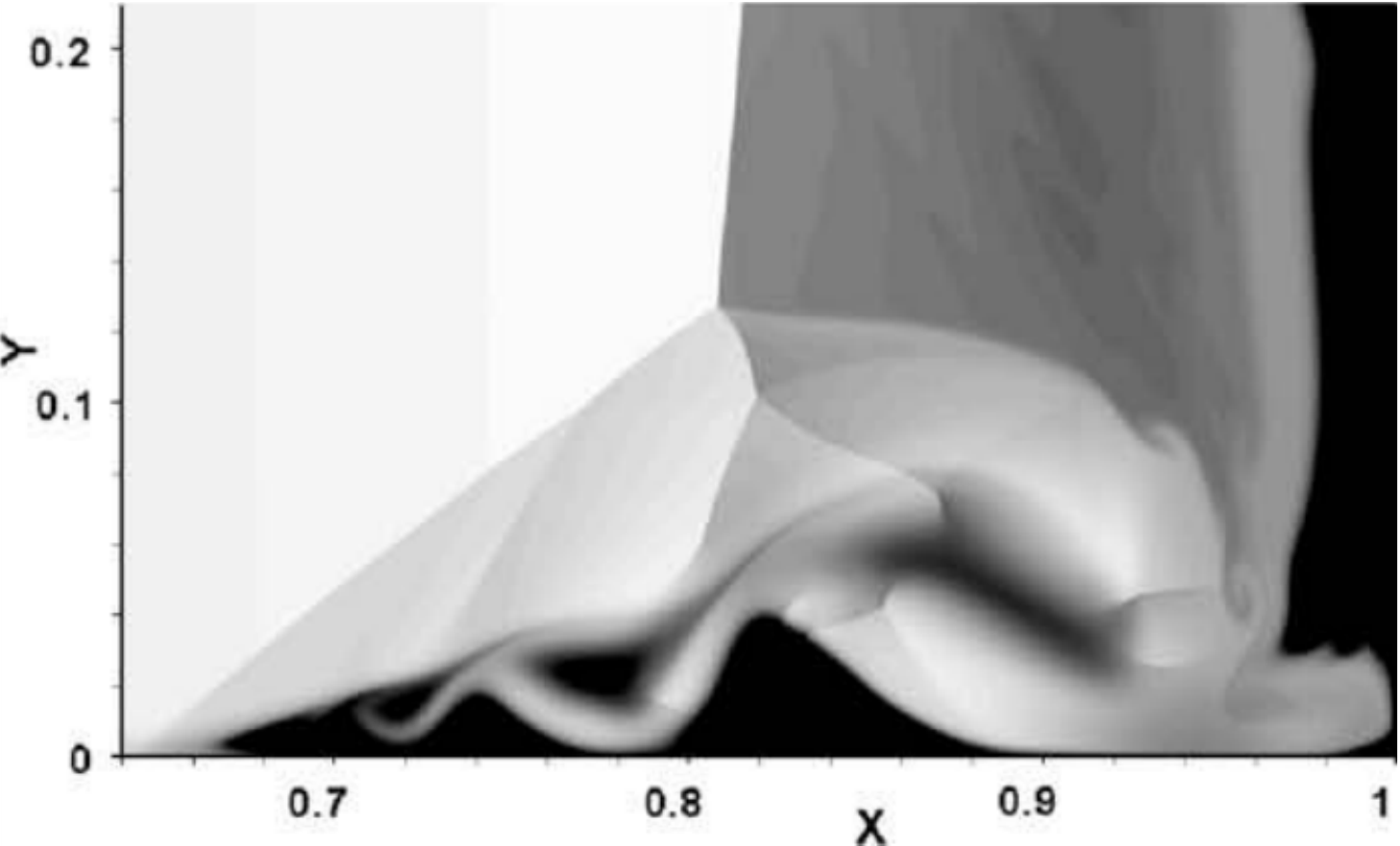}
    \end{subfigure}
    \begin{subfigure}{0.49\textwidth}
    \vspace{2.1mm}
    \includegraphics[width=0.98\textwidth,valign=t]{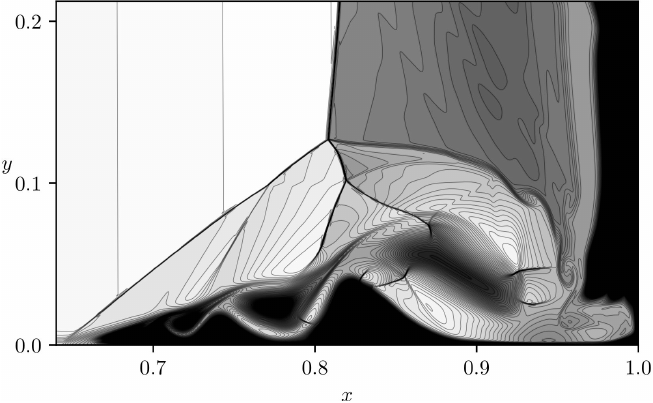}
    \end{subfigure}
   \end{subfigure}
   \caption{Contours of temperature (41 contour levels between 0.4 and 1.2) obtained at $t=0.6$ for several Reynolds numbers. Left: reference from~\cite{Daru2009} (OSMP7 scheme), right: present fourth-order scheme. From top to bottom: $\mathrm{Re}=200$, $\mathrm{Re}=500$, $\mathrm{Re}=750$ and $\mathrm{Re}=1000$.}
   \label{fig:ShockBoundary_t06}
\end{figure}

In Fig.~\ref{fig:ShockBoundary_t1}, similar temperature contours are displayed at time $t=1$. The overall pattern of the flow is, in any case, very similar to what was obtained with the OSMP7 scheme, which allows us to qualitatively validate the ability of the proposed kinetic scheme to deal with complex compressible and viscid flows.

\begin{figure}[h!]
	\begin{subfigure}{\linewidth}	
  \begin{subfigure}{0.49\textwidth}    \includegraphics[width=0.965\textwidth, valign=t]{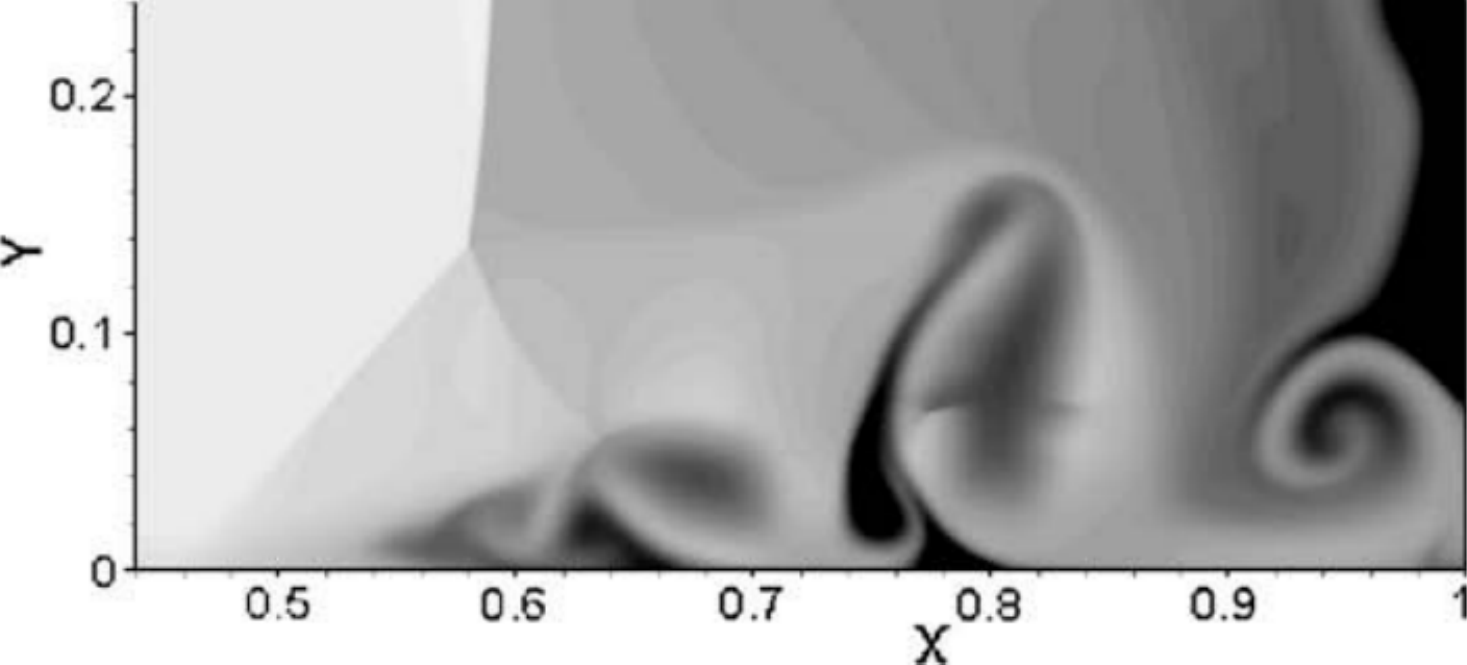}
    \end{subfigure}
    \begin{subfigure}{0.49\textwidth}
    \vspace{2mm}
    \includegraphics[width=0.98\textwidth, valign=t]{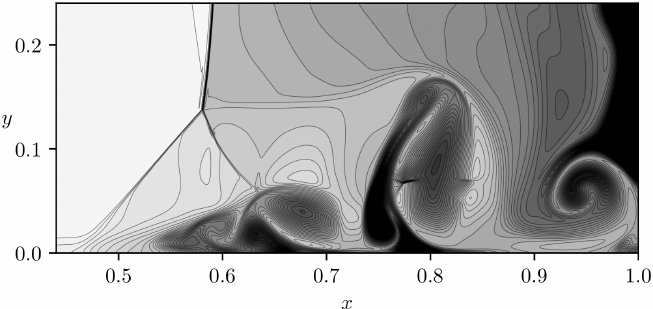}
    \end{subfigure}
   \end{subfigure}
	\begin{subfigure}{\linewidth}	
   \begin{subfigure}{0.49\textwidth}    \includegraphics[width=0.965\textwidth, valign=t]{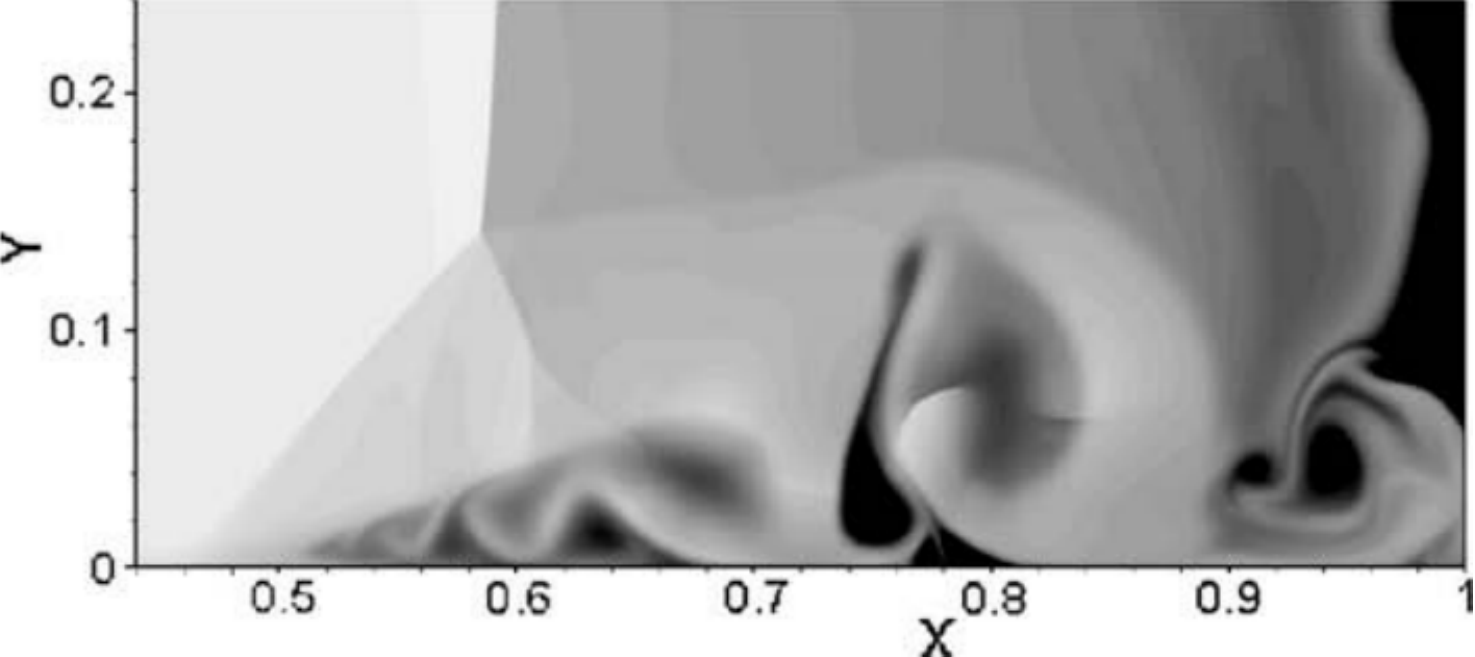}
    \end{subfigure}
    \begin{subfigure}{0.49\textwidth}
    \vspace{2mm}
    \includegraphics[width=0.98\textwidth, valign=t]{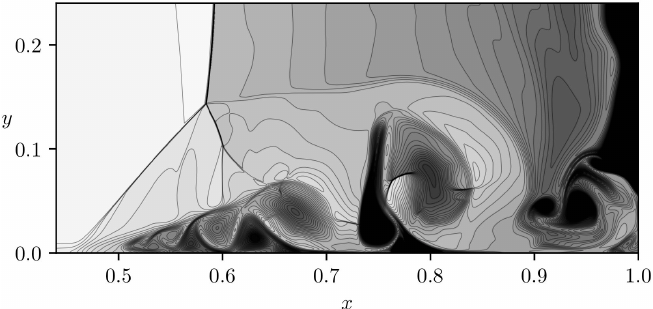}
    \end{subfigure}
   \end{subfigure}
	\begin{subfigure}{\linewidth}	
   \begin{subfigure}{0.49\textwidth}    \includegraphics[width=0.965\textwidth, valign=t]{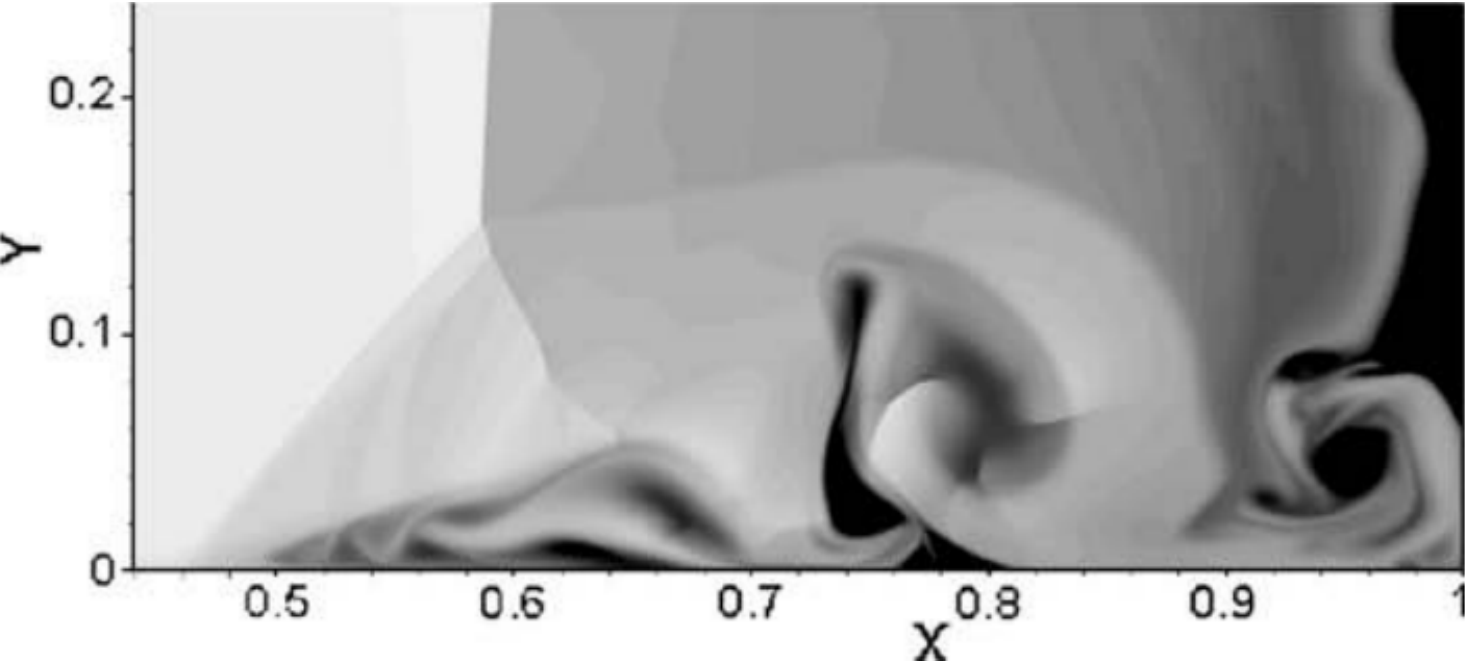}
    \end{subfigure}
    \begin{subfigure}{0.49\textwidth}
    \vspace{2mm}
    \includegraphics[width=0.98\textwidth, valign=t]{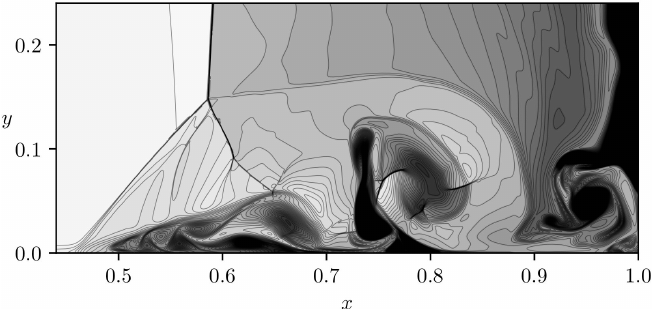}
    \end{subfigure}
   \end{subfigure}
   \begin{subfigure}{\linewidth}	
   \begin{subfigure}{0.49\textwidth}    \includegraphics[width=0.965\textwidth, valign=t]{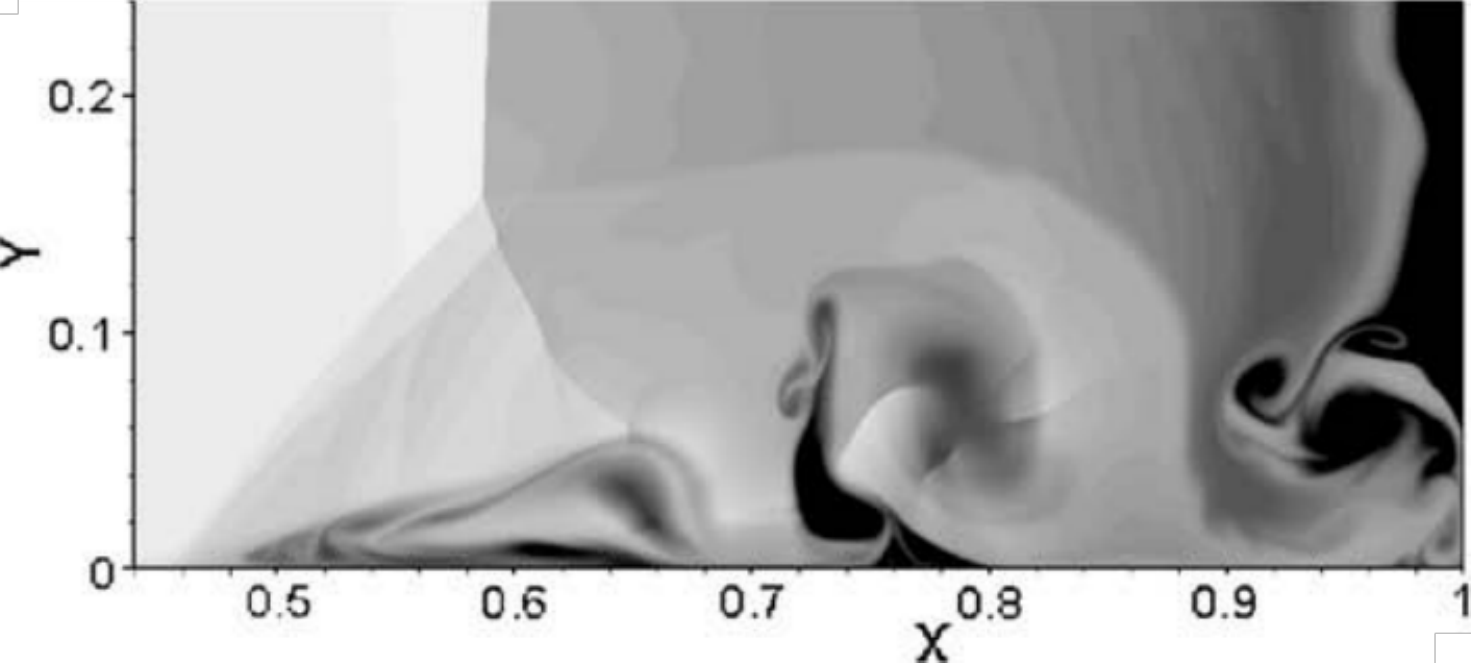}
    \end{subfigure}
    \begin{subfigure}{0.49\textwidth}
    \vspace{2mm}
    \includegraphics[width=0.98\textwidth, valign=t]{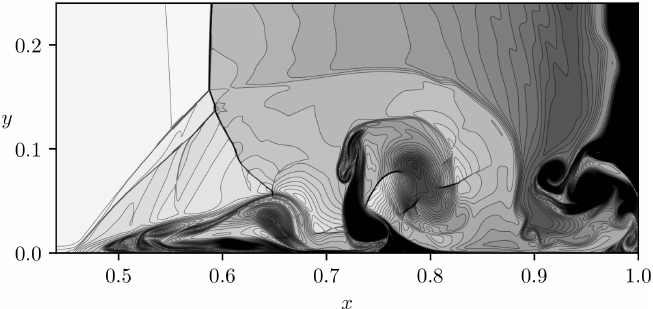}
    \end{subfigure}
   \end{subfigure}
   \caption{Contours of temperature (41 contour levels between 0.4 and 1.2) obtained at $t=1$ for several Reynolds numbers. Left: reference from~\cite{Daru2009} (OSMP7), right: present fourth-order scheme. From top to bottom: $\mathrm{Re}=200$, $\mathrm{Re}=500$, $\mathrm{Re}=750$ and $\mathrm{Re}=1000$.}
   \label{fig:ShockBoundary_t1}
\end{figure}

More quantitative results are shown in Fig.~\ref{fig:ShockBoundary_pos}, where the horizontal and vertical positions of the triple point are plotted as a function of time. As in~\cite{Daru2009}, the horizontal velocity is constant and nearly independent of the Reynolds number. On the contrary, the vertical position of the triple point strongly depends on the Reynolds number. The results obtained with the fourth-order kinetic scheme are in any case very close to that obtained with the OSMP7 scheme of \cite{Daru2009}. Notably, we recover the fact that, at $t\approx 0.53$, all the curves cross each other.

\begin{figure}[h!]
    \centering
    \begin{subfigure}{0.48\textwidth}
    \centering
    \includegraphics{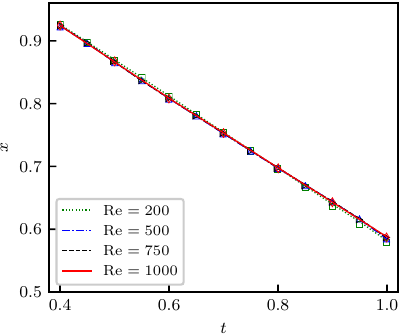}
    \end{subfigure}
    \begin{subfigure}{0.48\textwidth}
    \centering
    \includegraphics{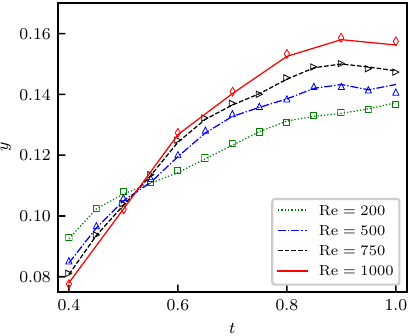}
    \end{subfigure}
    \caption{Horizontal (left) and vertical (right) coordinates of the triple point versus time. Symbols display the reference results from~\cite{Daru2009} at different Reynolds numbers.}
    \label{fig:ShockBoundary_pos} 
\end{figure}

A last important remark regards the ability of the proposed schemes to deal with discontinuities such as the shock waves encountered in the present test case. The fourth-order method investigated here has not been built in order to satisfy total variation diminushing (TVD) properties. Therefore, oscillations are likely to appear close to any kind of discontinuity. In all the results presented in this section, oscillations are indeed observed, but their amplitude is, fortunately, considerably damped thanks to the fluid viscosity and thermal diffusion. In these cases, they never generate local negative values of the pressure, density or temperature, which would lead to non-physical behaviors. In order to make this method robust in more delicate situations involving greater oscillations (\textit{e.g.} at higher Reynolds numbers or for coarser meshes), a non-linear stabilization strategy based on the MOOD technique~\cite{Vilar} can be employed, such as what was recently proposed in~\cite{Torlo, Abgrall2023}.

\section{Conclusion and perspectives}

This work introduced new kinetic methods to approximate linear and non-linear transport-diffusion systems like the Navier-Stokes equations for fluid dynamics. In~\cite{wissocq2023Kinetic}, the foundations of the proposed method have been laid for one-dimensional cases. They are based on two new ideas:
\begin{itemize}
	\item Compared to previous work dedicated to the investigation of kinetic methods in the diffusive limit~\cite{Jin, Jin1998, Jin2000, Naldi2000, Peng2020}, diffusion is not targeted in the strict limit $\varepsilon \rightarrow 0$ but for arbitrary low, non-vanishing, Knudsen numbers.
	\item The free parameters necessary to match a desired diffusion are introduced thanks to a specifically designed collision matrix, thereby establishing a coupling between the conserved variables within the relaxation term.
\end{itemize}
The present work has extended this framework to the general multi-dimensional case. An additional difficulty arises from the fact that the diffusion matrix is not, in general, block diagonal. For the Navier-Stokes equations, this is due to the existence of shear viscosity: the fluid motion in one direction generates a diffusive flux in the transverse direction, as evidenced for example in a Couette flow. Therefore, it is not possible to extend the idea of~\cite{wissocq2023Kinetic} with a simple dimensional splitting because directions have to be coupled with each other. In this work, this is achieved by reformulating the collision process in the basis of moments, inspired by the MRT models in the LBM community~\cite{DHumieres1994, Lallemand2000, DHumieres2002}. A major difference with standard MRT model, however, is that we do not assume the collision matrix to be diagonal in a given basis of moments. Instead, we make no assumption on the block matrix in the sub-space of first-order moments, and show, thanks to a Chapman-Enskog expansion, that a unique form of this sub-matrix can be found in order to match the transport-diffusion problem at first-order in Knudsen number. This enables the coupling of fluxes in all directions within the relaxation process, thus restoring the non-diagonal characteristic of the diffusion matrix. 

Furthermore, the reformulation of the collision matrix in the moments basis allows us to systematically ensure the conservation of the zeroth-order moments, as well as to explicitly control the behavior of additional high-order moments. In this regard, a regularization procedure, similar to what is sometimes done in the LBM community~\cite{Skordos1993, Ladd2001, Latt2006, Malaspinas2015, Coreixas2017}, can be adapted to the kinetic framework. Noticing that high-order moments are not involved in the Chapman-Enskog expansion at first-order in $\varepsilon$, they can be arbitrarily set to zero without affecting the recovery of the target convection-diffusion system. This strategy allows us to reinterpret our kinetic models as Jin-Xin ones, involving $d+1$ variables only, thus increasing the efficiency and reducing the memory cost of our method.

This strategy has been validated for one-dimensional cases in~\cite{wissocq2023Kinetic}, and for several two-dimensional cases in the present paper: linear advection-diffusion in two dimensions, two kinds of Couette flows and shock-boundary layer interactions at various Reynolds numbers. All the testcases confirm the existence of a consistency error scaling as $\mathcal{O}(\varepsilon^2)$, which can be arbitrarily reduced by increasing the kinetic speeds. In a sense, the issue reported in the diffusion limit of standard kinetic methods is still present: an \emph{exact} matching of the convection-diffusion problem could only be achieved in the limit of \emph{infinite} kinetic speeds, and the time step would tend to zero in this limit. However, it is important to note that in our model, the requirement for a small time step is linked to an accuracy condition, rather than a stability constraint as in previous models. By releasing the constraint of exact resolution of the advection-diffusion problem, thus accepting a consistency error, it is possible to ensure stability conditions with affordable time steps. This idea is inspired by the fact that Navier-Stokes is only an approximation of Boltzmann for small but nonzero Knudsen numbers. It is validated by the excellent results obtained with the shock-boundary layer interactions, where the time step is only constrained by the sub-characteristic condition of a four-wave kinetic model.

Further work will aim at validating these methods on more complex applications, potentially involving other types of transport-diffusion equations, and increasing the computational efficiency of the numerical method. This may be achieved by a GPU implementation of the scheme, which can speed up computations of matrix products and local inversions. An extension of this method to non-structured grids may also be the purpose of future work.

\section*{Acknowledgements}
GW has been funded by SNFS grants \# 
200020\_204917 ``Structure preserving and fast methods for hyperbolic systems of conservation laws'' and 
FZEB-0-166980.

\appendix

\section{Chapman-Enskog expansion of the BGK system}
\label{app:CE_BGK}

We start by left-multiplying \eqref{eq:kinetic_with_epsilon} by $\P$, leading to
\begin{align}
	\label{eq:app_CE_BGK_1}
	\dpar{\bbu^\varepsilon}{t} + \sum_{i=1}^d \dpar{}{x_i} \left( \P \Lambda_i \bF \right) = \mathbf{0},
\end{align}
where we note $\bbu^\varepsilon = \P \bF$ and where we used the fact that $\P \M (\bbu^\varepsilon) = \bbu^\varepsilon$. This is a conservation equation of $\bbu^\varepsilon$ involving fluxes $\P \Lambda_i \bF$. The purpose of the Chapman-Enskog expansion is to provide an approximation of this flux for small values of $\varepsilon$. To do so, we rewrite \eqref{eq:kinetic_with_epsilon} as
\begin{align}
	\bF = \M(\bbu^\varepsilon) - \varepsilon \, \frac{\ell}{||\Lambda||} \left( \dpar{\bF}{t} + \sum_{j=1}^{d} \Lambda_j \dpar{\bF}{x_j} \right).
\end{align}
In particular, we see that $\bF = \M(\bbu^\varepsilon) + \mathcal{O}(\varepsilon)$. Hence,
\begin{align}
	\bF = \M(\bbu^\varepsilon) - \varepsilon \, \frac{\ell}{||\Lambda||} \left( \dpar{\M(\bbu^\varepsilon)}{t} + \sum_{j=1}^{d} \Lambda_j \dpar{\M(\bbu^\varepsilon)}{x_j} \right) + \mathcal{O}(\varepsilon^2).
\end{align}
Then left-multiplying by $\P \Lambda_i$,
\begin{align}
	\P \Lambda_i \bF = \bbf_i(\bbu^\varepsilon) - \varepsilon \frac{\ell}{||\Lambda||} \left( \dpar{\bbf_i(\bbu^\varepsilon)}{t} + \sum_{j=1}^d \dpar{}{x_j} \left( \P \Lambda_i \Lambda_j \M(\bbu^{\varepsilon}) \right) \right) + \mathcal{O}(\varepsilon^2),
\end{align}
where we used the fact that $\P \Lambda_i \M(\bbu^\varepsilon) = \bbf_i(\bbu^\varepsilon)$. The time and space derivatives can be disposed thanks to chain rules,
\begin{align}
	& \dpar{\bbf_i(\bbu^\varepsilon)}{t} = \bbf_i'(\bbu^\varepsilon) \dpar{\bbu^\varepsilon}{t} = -\bbf_i'(\bbu^\varepsilon) \sum_{j=1}^d \dpar{\bbf_j(\bbu^\varepsilon)}{x_j} + \mathcal{O}(\varepsilon) = -\sum_{j=1}^d \bbf_i'(\bbu^\varepsilon) \bbf_j'(\bbu^\varepsilon) \dpar{\bbu^\varepsilon}{x_j} + \mathcal{O}(\varepsilon), \\
	& \dpar{}{x_j} \left( \P \Lambda_i \Lambda_j \M(\bbu^\varepsilon) \right) = \P \Lambda_i \Lambda_j \M'(\bbu^\varepsilon) \dpar{\bbu^\varepsilon}{x_j}.
\end{align}
This leads to
\begin{align}
	\P \Lambda_i \bF = \bbf_i(\bbu^\varepsilon) - \varepsilon \frac{\ell}{||\Lambda||} \sum_{j=1}^d \left[ \P \Lambda_i \Lambda_j \M'(\bbu^\varepsilon) - \bbf_i'(\bbu^\varepsilon) \bbf_j'(\bbu^\varepsilon) \right] \dpar{\bbu^\varepsilon}{x_j} + \mathcal{O}(\varepsilon^2),
\end{align}
which, when injected in \eqref{eq:app_CE_BGK_1}, leads to \eqref{eq:CE_BGK}.

\end{document}